\newtheorem{corollary}{Corollary}[section]
\newtheorem{definition}{Definition}[section]
\newtheorem{lemma}{Lemma}[section]
\def\proof{\par{\bf Proof}. \ignorespaces}
\def\qedsymbol{\vbox{\hrule\hbox{%
                     \vrule height1.3ex\hskip0.8ex\vrule}\hrule}}
\def\endproof{\qquad\qedsymbol\medskip\par}
\newtheorem{proposition}{Proposition}[section]
\newtheorem{remark}{Remark}[section]
\newtheorem{theorem}{Theorem}[section]
\newcommand{\remove}[1]{}
\def\al{\alpha}
\def\be{\beta}
\def\ga{\gamma}
\def\de{\delta}
\def\ep{\varepsilon}
\def\ze{\zeta}
\def\th{\theta}
\def\ka{\kappa}
\def\la{\lambda}
\def\si{\sigma}
\def\Si{\Sigma}
\def\C{{\mathbb C}}
\def\P{{\mathbb P}}
\def\U{{\mathbb U}}
\def\CH{{\mathcal H}}
\newcommand {\ra}{\rightarrow}
\newcommand {\diag}{\mathrm{diag}}
\newcommand {\rank}{\mbox{rank }}
\renewcommand {\ker}{\mbox{ker }}
\newcommand{\invder}[2]{D#1(#2)|_{#2^\perp}^{-1}}
\newcommand{\Hd}{\mathcal H_{(d)}}
\begin{document}
\title[Step Size Selection for Homotopy Methods]{Adaptive Step Size Selection for Homotopy Methods to Solve Polynomial Equations}
\author{Jean-Pierre Dedieu}
\address {Institut de Mathématiques de Toulouse, Université Paul Sabatier, 31069 Toulouse Cedex 9, France}
\email{jean-pierre.dedieu@math.univ-toulouse.fr}
\urladdr{http://www.math.univ-toulouse.fr/~dedieu/}
\author{Gregorio Malajovich}
\address{Instituto de Matemática, Universidade Federal do Rio de Janeiro.
Caixa Postal 68530 Rio de Janeiro RJ 21941-909 Brasil.}
\email{gregorio.malajovich@gmail.com}
\urladdr{http://www.labma.ufrj.br/~gregorio}
\author{Michael Shub}
\address {CONICET, IMAS, Universidad de Buenos Aires, Argentina and CUNY Graduate School, New York, NY, USA.}
\email{shub.michael@gmail.com}
\urladdr{http://sites.google.com/site/shubmichael/}
\thanks{This research was funded by MathAmsSud grant {\em Complexity}. Michael Shub was partially supported by a CONICET grant.
Gregorio Malajovich is partially supported by CNPq, FAPERJ and CAPES (Brasil).}

\date{June 6,2011}
\subjclass{Primary 65H10, 65H20. Secondary 58C35}
\keywords{Approximate zero, homotopy method, condition metric.}

\begin{abstract}

Given a $C^1$ path of systems of homogeneous polynomial equations $f_t$, $t \in [a,b]$  and an approximation $x_a$ to a zero $\zeta_a$ of the initial system $f_a$, we show how to adaptively choose the step size for a Newton based homotopy
method so that we approximate the lifted path $(f_t,\zeta_t)$ in the space of $(problems, solutions)$ pairs.
 The total number of Newton iterations is bounded in terms of the length of the lifted path in the condition metric.
\end{abstract}
\maketitle

\section{Introduction}

\noindent Let us denote by
$\CH_{(d)}$ the vector space of homogeneous polynomials systems
$$f : \C^{n+1} \rightarrow \C^n,$$
$f=(f_1, \ldots , f_n)$ in the variable $z = (z_0, z_1, \ldots , z_n)$, with degree $(d) = (d_1, \ldots , d_n)$, so that $f_i$ has degree $d_i$.

Given a $C^1$ path of systems $t \in [a,b] \ra f_t \in \CH_{(d)}$, and  a zero $\zeta_a$ of the initial system $f_a$, under very general conditions, the path $t\rightarrow f_t$ can be lifted to a $C^1$ path $t\rightarrow (f_t,\zeta_t)$ in the solution variety 
$$\widehat{V}  = \left\{ (f, \ze) \in \CH_{(d)} \times \C^{n+1} \ : \ f(\ze ) = 0 \right\} .$$
If we make the additional hypothesis that $\frac{d \zeta_t}{d t}$ is orthogonal to $\zeta_t$ this path is unique.

Now, given a sufficiently close approximation $x_a$ to the zero $\zeta_a$ of the initial system $f_a$, predictor-corrector methods based on Newton's method may approximate the lifted path $(f_t,\zeta_t)$ by a finite number of pairs $\left( f_{t_i}, x_i \right) \in \CH_{(d)} \times \C^{n+1}$, $0 \leq i \le k$. These algorithms are designed as follows: first the interval $[a,b]$ is discretized by a finite number of points $a= t_0 < t_1 < \ldots < t_k = b$, then a sequence $(x_i)$ is constructed recursively by 
$$x_0 = x_a \mbox{ and } x_{i+1} = N_{f_{t_{i+1}}}(x_i)$$ 
where $N_{f_{t}}$ is the projective Newton operator associated with the system $f_t$. The complexity of such algorithms is measured by the size $k$ of the subdivision $(t_i)$. If we make a good choice for $(t_i)$ then $k$ is small and, for each $i$, $x_i$ is an approximate zero of $f_{t_i}$ associated with $\ze_{t_i}$.

The complexity of such algorithms has been related by Shub-Smale in \cite{Bez1} to the length $l$ of the path $(f_t)$ and to the condition number of the path 
$(f_t, \ze_t)$: $\mu = \max_{a \le t \le b}\mu(f_t, \ze_t)$. The condition number measures the size of the first order variations of the zero of a polynomial system in
terms of the first order variations of the system. For $(f,\ze) \in V$ it is given by
$$\mu ( f, \ze) = \left\| f \right\| 
\left\|
\left( Df(\ze)\left|_{\ze^\perp}\right. \right)^{-1} 
\diag \left(\sqrt{d_i}\left\| \ze \right\|^{d_i - 1} \right)
\right\|$$
or $\infty$ when $\rank Df(\ze)\left|_{\ze^\perp}\right. < n$.
We extend this definition to any pair $(f,z) \in \CH_{(d)} \times \C^{n+1}$ by the same formula.
Shub-Smale in \cite{Bez1} give the bound
$$k \leq C D^{3/2}l \mu^2,$$
where $C$ is a constant, and $D = \max d_i$.

A more precise estimate is given in Shub \cite{shu}. The author proves that we can choose the step size in predictor-corrector methods so that the number of steps sufficient to approximate the lifted path is bounded in terms of the length of the lifted path in the condition metric: 
$$L = \int_a^b \left(\left\| \frac{df_t}{dt} \right\|_{f_t}^2 + \left\| \frac{d\ze_t}{dt} \right\|_{\ze_t}^2 \right)^{1/2}\mu(f_t, \ze_t) dt.$$
We can find such a subdivision $(t_i)$, $0 \le i \le k$, with
$$k \leq C D^{3/2} L.$$
Its construction is given by 
$$t_0 = a, \mbox{ and } \int_{t_i}^{t_{i+1}} \left(\left\| \frac{df_t}{dt} \right\|_{f_t} + \left\| \frac{d\ze_t}{dt} \right\|_{\ze_t} \right) dt = 
\frac{C}{D^{3/2}\mu ( f_{t_i}, \ze_{t_i})}$$
but, in this paper, some universal constants are not estimated, and no constructive algorithm is given.

In the algorithm we present below, we compute $t_{i+1}$ from $t_i$ so that at least one of the quantities 
  $$\int_{t_i}^{t_{i+1}}\left\| \frac{df_t}{dt} \right\|_{f_t}  dt $$
  and 
  $$\int_{t_i}^{t_{i+1}}  \left\| \frac{d\ze_t}{dt} \right\|_{\ze_t} dt$$
  increases of a given fraction of 
  $$\frac{1}{D^{3/2}\mu ( f_{t_i}, \ze_{t_i})}.$$
Moreover, to allow approximate computations, we introduce a tolerance parameter $\ep$. 
This algorithm reflects the geometrical structures used in \cite{shu}. This structure is based on a Lipschitz-Riemannian metric defined in the solution variety $V$ by 
$$\left\langle .,. \right\rangle_{V,(f,\ze)}\mu(f,\ze)^2$$
where $\left\langle .,. \right\rangle_{V,(f,\ze)}$ is the Riemannian metric in $V$ inherited from the usual metric on $\P({\CH}_{(d)})\times \P(\C^{n+1})$. This condition metric is studied in more details in Beltr\'an-Dedieu-Malajovich-Shub \cite{bel1} and \cite{bel1bis}, Beltr\'an-Shub \cite{bel2} and \cite{bel3}, and in Boito-Dedieu \cite{boi}. 

\vskip 5mm
\fbox{
\begin{minipage}{\textwidth}
\noindent
\begin{trivlist}
\item {\sc Algorithm} {\bf Homotopy}
\item {\sc Input:} $(f_t)_{t \in [a,b]}$, $0 \ne x_0 \in \C^{n+1}$, $0 < \epsilon \leq 1/20$.
\item {\sc Output:}
\subitem An integer $k \geq 1$,
\subitem A subdivision $a= t_0 < t_1 < \ldots < t_k = b$,
\subitem A sequence of nonzero points $x_i \in \C^{n+1}$, $0 \le i \le k$.
\item {\sc Algorithm:}
\item $i \leftarrow 0$; $t_0 \leftarrow a$;
\item {\bf Repeat}
\subitem {\bf Find} $s \in [t_i, b]$ so that
\begin{equation}\label{s1}
\frac{20 \epsilon^2}{5 D^{1/2} \mu(f_{t_i},x_i)}
\le
\int_{t_i}^s \left\|  \frac{df_t}{dt}  \right\|_{f_t} dt
\le
\frac{\epsilon}{5 D^{1/2} \mu(f_{t_i},x_i)} .
\end{equation}
In case there is no such $s$, make $s=b$.\\
\subitem
{\bf Find} $s' \in [t_i,b]$ so that for all $\sigma \in [t_i, s']$,
\begin{equation}\label{s2}
\frac{20 \epsilon^2}{5 D^{3/2} \mu(f_{t_i},x_i)}
\le
\phi_{t_i, \sigma}(x_{i})
\le
\frac{\epsilon}{ 5 D^{3/2} \mu(f_{t_i},x_i)}
\end{equation}
where $$\phi_{t_i, \sigma}(x_{i}) = \lVert x_i \rVert^{-1} \lVert Df_{t_i}(x_i)|_{x_i^\perp}^{-1}\left(f_{t_i}(x_i) - f_\sigma(x_i)\right) \rVert .$$
In case there is no such $s'$, make $s'=b$.
\subitem $t_{i+1} = \min( s, s')$;
\subitem {\bf Find} $0 \ne x_{i+1} \in \mathbb C^{n+1}$ such that
\begin{equation}\label{s3}
d_R\left(N_{f_{t_{i+1}}}(x_i), x_{i+1}\right) < \frac{4\epsilon^2}{5D^2 \mu(f_{t_i},x_i)^2} ;
\end{equation}
\subitem $i \leftarrow i+1$;
\item {\bf Until} $t_i = b$.
\item {\bf Set $k \leftarrow i$}
\end{trivlist}
\end{minipage}
}
\vskip 5mm

This algorithm has the following properties: 

\begin{theorem}\label{main-homotopy} Given $0< \epsilon \leq 1/20$, a $C^1$ homotopy path $(f_t)_{t \in [a,b]}$ in $\CH_{(d)}$, and an initial point $0 \ne x_0 \in \C^{n+1}$ satisfying
\[
\frac{D^{3/2}}{2} \mu (f_a, x_0) \beta_0(f_a, x_0) < \frac{\epsilon^2}{2},
\]
then:
\begin{itemize}
\item[1.] $x_0$ is an approximate zero of $f_a$ with associated nonsingular zero $\zeta_a$,
\item[2.] Let $(f_t, \zeta_t)_{t\in[a,b]}$ be a continuous lifting of $(f_t)_{t \in [a,b]}$ in the solution variety initialized at $(f_a, \zeta_a)$. If the 
condition length $L$ is finite, then:
\item[2.a.] The algorithm {\bf Homotopy} with input $(\epsilon, (f_t), x_0)$ stops after at most 
$$k = 1+ 0.65 D^{3/2} \ep^{-2} L$$ 
iterations of the main loop,
\item[2.b.] For each $i=1 \ldots  k$, $x_i$ is an approximate zero of $f_{t_i}$ with associated zero $\zeta_{t_i}$.
\end{itemize}
\end{theorem}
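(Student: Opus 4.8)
\noindent{\bf Plan of proof.} The proof combines the $\gamma$/$\alpha$-theory of Newton's method with three quantitative facts I would isolate as lemmas first. (a) The higher-derivative estimate $\gamma(f,z)\le\tfrac{1}{2}D^{3/2}\mu(f,z)$ for $\|z\|=1$ (Shub--Smale), in its projective form for $N_{f_t}$, turning every $\mu$-bound into a bound on $\gamma$ and on $\alpha=\beta_0\gamma$. (b) A Lipschitz estimate for $\mu$: if $(g,w)$ is reached from $(f,z)$ by a displacement of size $\le c/\mu(f,z)$ in $\P(\CH_{(d)})\times\P(\C^{n+1})$, then $\mu(g,w)$ and $\mu(f,z)$ — hence also $\gamma(g,w)$ and $\gamma(f,z)$ — differ by at most a factor $\psi(c)$ with $\psi(c)\to1$ as $c\to0$; this is the Lipschitz--Riemannian property of the condition metric recalled above, and it makes $\mu(f_{t_i},x_i)$ a valid surrogate for $\mu(f_t,\zeta_t)$ throughout $[t_i,t_{i+1}]$. (c) An identification of $\phi_{t_i,\sigma}(x_i)$ of \eqref{s2} with the projective Newton increment $\beta_0(f_\sigma,x_i)$ of $x_i$ for the \emph{shifted} system and with the arclength $\int_{t_i}^{\sigma}\|\dot\zeta_t\|_{\zeta_t}\,dt$: since $x_i$ is near $\zeta_{t_i}$ and $f_{t_i}(\zeta_{t_i})=0$, the vector $Df_{t_i}(x_i)|_{x_i^\perp}^{-1}(f_{t_i}(x_i)-f_\sigma(x_i))$ is — up to factors controlled by $\gamma$ and by the system drift bounded in \eqref{s1} — both the Newton correction of $x_i$ for $f_\sigma$ and the increment $\zeta_\sigma-\zeta_{t_i}$, so the three quantities are comparable, each small once any one is, throughout the step.

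Item~1: the hypothesis yields $\beta_0(f_a,x_0)<\epsilon^2/(D^{3/2}\mu(f_a,x_0))$ and, by (a), $\alpha(f_a,x_0)=\beta_0(f_a,x_0)\gamma(f_a,x_0)<\epsilon^2/2\le1/800$, well below the projective Smale threshold; so $x_0$ is an approximate zero of $f_a$ with an associated zero $\zeta_a$, which is nonsingular because $\mu(f_a,x_0)<\infty$ forces $\mu(f_a,\zeta_a)<\infty$ by (b). One also records $d_R(x_0,\zeta_a)\le c_0\beta_0(f_a,x_0)$, and notes that the first loop iteration is well posed.

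Item~2.b is an induction whose hypothesis at node $i$ is the relation $\tfrac{D^{3/2}}{2}\mu(f_{t_i},x_i)\beta_0(f_{t_i},x_i)<\tfrac{\epsilon^2}{2}$ — i.e.\ the theorem's hypothesis at $(f_{t_i},x_i)$; the base case $i=0$ is item~1. Granted it, the argument of item~1 makes $x_i$ an approximate zero of $f_{t_i}$ with associated nonsingular zero $\zeta_{t_i}$ and $d_R(x_i,\zeta_{t_i})=O(\beta_0(f_{t_i},x_i))$, tiny on the scale $1/\gamma(f_{t_i},\zeta_{t_i})$. Now \eqref{s1} gives $\int_{t_i}^{t_{i+1}}\|\dot f_t\|_{f_t}\,dt\le\epsilon/(5D^{1/2}\mu(f_{t_i},x_i))$, which by (b) keeps $\mu$ and $\gamma$ along $[t_i,t_{i+1}]$ within fixed factors of their node values and $Df_{t_{i+1}}(x_i)$ near $Df_{t_i}(x_i)$; while \eqref{s2} gives $\phi_{t_i,t_{i+1}}(x_i)\le\epsilon/(5D^{3/2}\mu(f_{t_i},x_i))$, which by (c) makes $\beta_0(f_{t_{i+1}},x_i)$ of the same small order and $d_R(\zeta_{t_i},\zeta_{t_{i+1}})$ likewise small. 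Hence $\alpha(f_{t_{i+1}},x_i)=O(\epsilon)$ is small and $d_R(x_i,\zeta_{t_{i+1}})\le d_R(x_i,\zeta_{t_i})+d_R(\zeta_{t_i},\zeta_{t_{i+1}})$ lies well inside the quadratic-attraction basin of $\zeta_{t_{i+1}}$, so $x_i$ is an approximate zero of $f_{t_{i+1}}$ with associated zero $\zeta_{t_{i+1}}$. One exact Newton step for $f_{t_{i+1}}$ then squares $\alpha$, and the perturbation of size $<4\epsilon^2/(5D^2\mu(f_{t_i},x_i)^2)$ permitted by \eqref{s3} — a tiny multiple, via (a), of $1/\gamma(f_{t_{i+1}},\zeta_{t_{i+1}})$ — is small enough that $x_{i+1}$ again satisfies $\tfrac{D^{3/2}}{2}\mu(f_{t_{i+1}},x_{i+1})\beta_0(f_{t_{i+1}},x_{i+1})<\tfrac{\epsilon^2}{2}$. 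This reproduces the inductive hypothesis at node $i+1$, proves 2.b, and shows every $\mu(f_{t_i},x_i)$ finite, so that the algorithm is well defined.

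Item~2.a: it suffices that every iteration but the last adds at least $\epsilon^2/(0.65\,D^{3/2})$ to $L$. If $t_{i+1}<b$ the upper endpoint was reached in the $s$-search or the $s'$-search, so the \emph{lower} bound of \eqref{s1} or of \eqref{s2} holds: $\int_{t_i}^{t_{i+1}}\|\dot f_t\|_{f_t}\,dt\ge 4\epsilon^2/(D^{1/2}\mu(f_{t_i},x_i))$, or (via (c)) $\int_{t_i}^{t_{i+1}}\|\dot\zeta_t\|_{\zeta_t}\,dt\ge 4\epsilon^2/(c_1D^{3/2}\mu(f_{t_i},x_i))$ with $c_1$ near $1$. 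Since $(\|\dot f_t\|_{f_t}^2+\|\dot\zeta_t\|_{\zeta_t}^2)^{1/2}\ge\max(\|\dot f_t\|_{f_t},\|\dot\zeta_t\|_{\zeta_t})$ and, by (b) with $d_R(x_i,\zeta_{t_i})$ small, $\mu(f_t,\zeta_t)\ge c_2\mu(f_{t_i},x_i)$ on $[t_i,t_{i+1}]$, integrating the product — and using $D\ge1$ to pass from $D^{1/2}$ to $D^{3/2}$ in the first alternative — gives a step contribution to $L$ that is a definite positive constant times $\epsilon^2/D^{3/2}$; tracking the constants from (a)--(c) shows this constant is $\ge1/0.65$, while $\epsilon\le1/20$ is exactly what keeps the lower and upper thresholds of \eqref{s1}--\eqref{s3} compatible (e.g.\ $20\epsilon^2\le\epsilon$). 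Summing over the $k-1$ non-terminal iterations gives $L\ge(k-1)\epsilon^2/(0.65\,D^{3/2})$, i.e.\ $k\le1+0.65\,D^{3/2}\epsilon^{-2}L$, the ``$+1$'' accounting for the terminal iteration. I expect the crux to be the one-step estimate of item~2.b — keeping the system drift $f_{t_i}\to f_{t_{i+1}}$, the solution drift $\zeta_{t_i}\to\zeta_{t_{i+1}}$ and the round-off of \eqref{s3} under simultaneous control with constants sharp enough to recover the quantitative hypothesis at the next node and to yield the stated $0.65$ — which is precisely what pins down the coefficients $5$, $20$, $4$ and the cap $\epsilon\le1/20$ appearing in the algorithm.
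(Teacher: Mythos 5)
Your proposal follows essentially the same route as the paper: item 1 via projective $\alpha$-theory with the threshold $\epsilon^2/2$; item 2.b by induction on the invariant $\tfrac{D^{3/2}}{2}\mu(f_{t_i},x_i)\beta_0(f_{t_i},x_i)<\epsilon^2/2$, using the Lipschitz behaviour of $\mu$, the comparison of $\phi_{t_i,\sigma}(x_i)$ with $\beta_0(f_\sigma,x_i)$, one Newton step to contract $\beta_0$, and a robustness estimate for the perturbation allowed in \eqref{s3}; and item 2.a by showing each non-terminal step, having attained the lower threshold of \eqref{s1} or \eqref{s2}, contributes at least $\epsilon^2/(0.65\,D^{3/2})$ to the condition length. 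The constants are only asserted rather than verified (e.g.\ your ``$c_1$ near $1$'' is really about $13/5$ in the $\phi$-alternative, which is exactly where the $0.65$ comes from), but the skeleton coincides with the paper's Lemmas 5.1--5.4.
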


\begin{remark} \begin{itemize} \item This algorithm is robust: it is designed to allow approximate computations. 
\item For $\ep = 1/20$, the computations in \ref{s1} and \ref{s2} have to be exact. 
\item The hypothesis { \it ``the condition length $L$ is finite''} holds generically for $C^1$ paths in the solution variety $V$ and, consequently, Theorem \ref{main-homotopy} holds for ``almost all'' inputs $(f_t)_{t \in [a,b]}, x_0$.
\item We reach the same complexity as in Shub \cite{shu}. 
\item We require the regularity $C^1$ for our path in the space of systems but, in fact, an absolute continuous path $t \in [a,b] \ra f_t \in \Hd$ is sufficient to define the concepts of length and condition length and to prove Theorem \ref{main-homotopy}. The proofs given in section \ref{sec-homotopy} are still valid.
\end{itemize}
\end{remark}

Let us now mention other approaches to the construction of {\it ``convenient subdivisions''}. 

A practical answer consists in taking $t_{i+1} = t_i + \delta t$ for an arbitrary $\delta t > 0$. If $x_{i+1}$ fails (resp. succeeds) to be an approximate zero of $f_{t_{i+1}}$ then we take $\delta t / 2$ (resp. $2\delta t$) instead of $\delta t$; see  Li \cite{TYL}. With such an algorithm we may jump from a lifted path $t \ra (f_t,\ze_t) \in V$ to another one $t \ra (f_t,\zeta'_t) \in V$. Even if, for each $i$, $x_i$ is an approximate sero of $f_{t_i}$, we cannot certify that the sequence $(f_{t_i}, x_i)$ approximates the path $(f_t,\ze_t)$.

In Beltr\'an \cite{beltran}, the author presents an algorithm to construct a certified approximation of the lifted path. It requires a $C^{1 + Lip}$ path in the space of systems, and it has an additional multiplicative factor in the number of steps given in \cite{shu}. This extra factor is unbounded for the class of $C^1$ paths considered here. 

Beltr\'an's algorithm is studied in more detail in Beltr\'an-Leykin \cite{beltran-leykin}, which contains implementations and experimental results.  

Another important problem, which is not considered here, is the choice of both the homotopy path $(f_t)$, $a \le t \le b$, and the initial zero $\ze_a$. 
Classical strategies are described in Li \cite{TYL}, and Sommese-Wampler \cite{som}.

A conjectured ``good choice'' (see Shub-Smale \cite{Bez5}) is the system 
\[
f_a(z_0, z_1, \ldots , z_n) = 
\begin{cases}
d_1^{\frac{1}{2}}z_0^{d_1-1}z_1=0,\\
\cdots\\
d_n^ {\frac{1}{2}}z_0^{d_n-1}z_n=0,
\end{cases}
\;\;\;\ze_a = (1,0,\ldots,0),
\]
and a linear homotopy connecting this initial system to the target system $f_b$. See \cite{Bez5} for a precise statement. This conjecture is still unproved. In Shub-Smale \cite{Bez5} an adaptive algorithm is given for linear homotopies whose number of steps is bounded by the estimate in Shub-Smale \cite{Bez1}. The algorithm we present here is a version of that algorithm adapted to the new context of length in the condition metric.

Beltr\'an-Pardo \cite{Beltran-Pardo} use a linear homotopy and Beltr\'an's strategy for the choice of the subdivision. They get, for a random choice of 
$(f_a, \ze_a)$, an average running time $O\Tilde{ }(N^2)$ where $N$ is the size of the input. 

Buergisser-Cucker in \cite{Buergisser-Cucker} define an explicit algorithm, called ALH, based on the linear homotopy and a certain adaptive construction for the subdivision. They obtain the complexity
$$217 D^{3/2} d_\P(f_a, f_b) \int_a^b \mu(f_t, \ze_t)^2dt$$
which is not as sharp as the estimate based on the condition length given in \cite{shu} or to our own estimate. Then, we cite the authors, {\it ``ALH will
serve as the basic routine for a number of algorithms computing zeros of
polynomial systems in different contexts. In these contexts both the input
system $f_b$ and the origin $(f_a, \ze_a)$ of the homotopy may be randomly chosen''.} See this manuscript for a more detailled description.

Our feeling, based on a series of papers on the condition metric: \cite{bel1}, \cite{bel1bis}, \cite{bel2}, \cite{bel3}, \cite{boi}, is that a good choice for the homotopy path $(f_t)$ and the initial zero $\ze_a$ will induce a lifted path $(f_t, \ze_t)$ close to the condition geodesic connecting $(f_a, \ze_a)$ to 
$(f_b, \ze_b)$. We are far from accomplishing this task. 

Our paper is organized as follow. Section \ref{sect-2} recalls the geometric context and contains the main definitions. In section \ref{sect-3} we study the variations of the condition number $\mu(f,x)$ when we vary both the system $f$ and the vector $x$. The main difficulty is to estimate universal constants which are already present in 
many papers (Shub-Smale \cite{Bez5}, Shub \cite{shu} for example) but which are not given explicitely. Such explicit constants are necessary to design an explicit algorithm. Section \ref{sec-alpha-th} contains, in the same spirit, explicit material about projective alpha-theory. The last section is devoted to the proof of 
Theorem \ref{main-homotopy}.

\section{Context and definitions}\label{sect-2}

\subsection{Definitions}
We begin by recalling the context. For every positive integer $l\in{\mathbb N}$, let ${\mathcal H}_l\subseteq {\mathbb C}[x_0,\ldots,x_n]$, $n \ge 2$, be the vector space of homogeneous polynomials of degree $l$.  For $(d)=(d_1,\ldots,d_n)\in{\mathbb N}^n$, let ${\mathcal H}_{(d)}=\prod_{i=1}^n {\mathcal H}_{d_i}$ be the set of
all systems $f=(f_1,\ldots,f_n)$ of homogeneous polynomials of respective degrees $\deg(f_i)=d_i,~1\leq i\leq n$. So $f: \mathbb{C}^{n+1}\rightarrow \mathbb{C}^n.$ We denote by $D:=\max\{d_i:1\leq i\leq n\}$ the maximum of the degrees, and we suppose $D \geq 2$.

The {\it solution variety} $\widehat{V}$ is the set of points $(f, \ze) \in \CH_{(d)} \times \C^{n+1}$ with $f(\ze)=0.$
Since the equations are homogeneous, for all $\lambda_1,\lambda_2 \in \mathbb{C} \setminus \{0\}$, $\lambda_1f(\lambda_2 \ze) = 0$ if and only if
$f(\ze)=0.$ So $\widehat{V}$ defines a variety $V \subset \mathbb{P}({\mathcal H}_{(d)})\times \mathbb{P}(\mathbb{C}^{n+1})$ where $\mathbb{P}({\mathcal H}_{(d)})$ and
$\mathbb{P}(\mathbb{C}^{n+1})$ are the projective spaces corresponding to ${\mathcal H}_{(d)}$ and $\mathbb{C}^{n+1}$ respectively; $\widehat{V}$ and $V$ are smooth. 

Most quantities we consider are defined on $\left(\CH_{(d)} \setminus \left\{ 0 \right\}\right) \times \left(\C^{n+1} \setminus \left\{ 0 \right\}\right)$ but are constant on equivalence classes
$$ \left\{(\la_1 f, \la_2 x) \ : \ \lambda_1,\ \lambda_2 \in \mathbb{C} \setminus \{0\}\right\}$$
so are defined on $\mathbb{P}({\mathcal H}_{(d)})\times \mathbb{P}(\mathbb{C}^{n+1}).$ This product of projective spaces is the natural geometric frame for this study, but our data structure is given by pairs $(f,x) \in \left(\CH_{(d)} \setminus \left\{ 0 \right\}\right) \times \left(\C^{n+1} \setminus \left\{ 0 \right\}\right)$. We speak interchangeably of a pair $(f, \zeta)$ in $\widehat{V}$ and its projection $(f, \zeta)$ in $V$. 

Given a $C^1$ path of systems $t \in [a,b] \ra f_t \in \mathbb{P}({\mathcal H}_{(d)})$, and  a zero $\zeta_a \mathbb{P}(\mathbb{C}^{n+1})$ of the initial system $f_a$, under very general conditions, the path $t \rightarrow f_t$ can be lifted to a unique $C^1$ path $t\rightarrow (f_t,\zeta_t)$ in the solution variety $V$. 

\medskip

Two important ingredients used in this paper are {\em projective Newton's method} introduced by Shub in \cite{shu-1}, and the concept of {\em approximate zero.}

For a pair $(f, x) \in \left(\CH_{(d)} \setminus \left\{ 0 \right\}\right) \times \left(\C^{n+1} \setminus \left\{ 0 \right\}\right)$ the projective Newton's operator $N_f$ is defined by
$$N_f(x) = x - Df(x)|_{x^\perp}^{-1}f(x).$$
Here we assume that the restriction of the derivative $Df(x)$ to the subspace orthogonal to $x$
$$x^\perp = \left\{u \in \C^{n+1} \ : \ \left\langle u,x \right\rangle = 0\right\}$$
is invertible. It is easy to see that the line throught $x$ is sent by $N_f$ onto the line throught $N_f(x)$ so that $N_f$ is in fact defined on $\mathbb{P}(\mathbb{C}^{n+1}).$

\begin{definition} \label{def-approx-root} We say that $x$ is an approximate zero of $f$ with associated zero $\ze$ ($f(\ze)=0$) provided that the point
$x_{p} = N_f(x_{p-1})$, $x_0=x$, is defined for all $p \geq 1$ and
$$d_T(\ze, x_p) \leq \left(\frac{1}{2}\right)^{2^p - 1}d_T(\ze, x).$$
\end{definition}
Here $d_T$ denotes the {\it tangential ``distance''} as in Definition \ref{definitions}.
\medskip

A well-studied class of numerical algorithms for solving polynomial systems uses homotopy (or path-following) algorithms associated with a predictor-corrector scheme.
We are given a $C^1$ path $(f_t)$, ${a \leq t \leq b}$, in the space $\Hd$, and a root $\zeta_a$ of $f_a$. Under certain genericity conditions, the path $(f_t)$ may be lifted uniquely to a $C^1$ path $(f_t,\zeta_t) \in \widehat{V}$, $t \in [a,b]$, starting at the given pair $(f_a,\zeta_a)$.

Given an approximate zero $x_a$ of $f_a$ associated with $\zeta_a$, our aim is to build an approximation of this path by a sequence of pairs $(f_{t_i}, x_{i}) \in \left(\CH_{(d)} \setminus \left\{ 0 \right\}\right) \times \left(\C^{n+1} \setminus \left\{ 0 \right\}\right)$, $1 \le i \le k$ where, $a= t_0 < t_1 < \ldots < t_k = b$ is a subdivision of the interval $[a,b]$, and where $x_{i}$ is an approximate zero of $f_{t_i}$ associated with $\ze_{t_i}$. To simplify our notations we let $f_{t_i} = f_i$.

The construction of the suddivision $(t_i)$ is given in the ``Algorithm Homotopy''. The construction of the sequence $(x_i)$ uses the predictor-corrector scheme based on projective Newton's method, studied for the first time in \cite{Bez1}.  This sequence is defined recursively by
$$x_{i+1} = N_{f_{i+1}}(x_i).$$
In fact, to allow computation errors, we choose $x_{i+1}$ in a suitable neighborhood of $N_{f_{i+1}}(x_i)$.
Theorem \ref{main-homotopy} proves that for each $i=1 \cdots k,$  $x_i$ is an approximate zero of $f_{t_i}$ associated with $\ze_{t_i}$ and it gives an estimate for the integer $k$ in terms of the maximum degree $D$, and the condition length of the path $(f_t, \zeta_t)$, $a \leq t \leq b$.

\medskip

\begin{definition} \label{definitions}
As it is clear from the context, systems and vectors are supposed nonzero.
\begin{enumerate}
\item $\Hd$ is endowed with the unitarily invariant inner product (see \cite{BCSS})
$$\langle f, g \rangle = \sum_{i=1}^n \sum_{\left| \al \right| = d_i} \frac{\al_0! \dots \al_n!}{d_i!}f_{i,\al}\overline{g_{i,\al}}$$
where $\al = (\al_0, \ldots , \al_n)$, $\left| \al \right| = \al_0 + \dots + \al_n$,
$$f_i(x_0, \ldots ,x_n) = \sum_{\left| \al \right| = d_i} f_{i,\al} x_0^{\al_0} \ldots x_n^{\al_n},$$
and
$$g_i(x_0, \ldots ,x_n) = \sum_{\left| \al \right| = d_i} g_{i,\al} x_0^{\al_0} \ldots x_n^{\al_n}.$$

 \item $d_R(x,y)$ is the Riemannian distance in $\P_n(\C)$.
 \item $d_P(x,y) = \sin d_R(x,y) = \min_{0 \ne \lambda \in \mathbb C}
\frac{\|x-\lambda y\|}{\|x\|}$ is the projective distance.
 \item $d_T(x,y) = \tan d_R(x,y)$ is the tangential ``distance''. It
does not satisfy the triangle inequality.
 \item One has $d_P(x,y)
\leq d_R(x,y) \leq d_T(x,y)$.
 \item When $\rank Df(x) = n$, we denote by $\th_x$ the angle between $x$ and $\ker Df(x)$. $\th_x = 0$ when $f(x)=0$.
 \item $\th_{L,M}$ denotes the angle between the complex lines $L$ and $M$ so that $\th_x = \th_{x,\ker Df(x)}$.
 \item $\psi(u) = 1 - 4u + 2u²$ decreases from $1$ to $0$ on the interval $[0, (2-\sqrt{2})/2]$.
 \item The norm of a linear (resp. multi-linear) operator is always the operator norm.
 \item The condition number 
 $$\mu(f,x) = \lVert f \rVert \left\|  Df(x) |_{x^\perp}^{-1} \diag\left(\sqrt{d_i} \rVert x \lVert^{d_i-1}\right)\right\|,$$
 is also denoted $\mu_{proj}$ in \cite{Bez1}, and $\mu_{norm}$ in \cite{BCSS}.
 \item $D = \max d_i$. We suppose $D \geq 2$.
 \item $u = D^{3/2}\mu(f,x)d_R(x,y)/2$. 
 \item $v = D^{1/2}\mu(f,x) \left\| \frac{f}{\|f\|} - \frac{g}{\|g\|} \right\|$
where it is assumed that $f\ne 0$ and $g \ne 0$.
 \item $\be_0(f,x) = \lVert x \rVert^{-1} \lVert Df(x)|_{x^\perp}^{-1}f(x) \rVert$.
 \item $\ga_0(f,x) = \lVert x \rVert \max_{k \geq 2}  \lVert Df(x)|_{x^\perp}^{-1}\frac{D^{k}f(x)}{k!} \rVert^{1/(k-1)}$.
 \item $\al_0(f,x) = \be_0(f,x) \ga_0(f,x).$
 \item $\alpha_0 = (13-3\sqrt{17})/4 = .15767\ldots$
 \item $\de(f,x) = \lVert x \rVert^{-1} \lVert Df(x)|_{x^\perp}^{-1}\diag(d_i)f(x) \rVert$.
 \item $\phi_{t,s}(x) = \lVert x \rVert^{-1} \lVert Df_t(x)|_{x^\perp}^{-1}(f_t(x) - f_s(x)) \rVert$.
 \item Let a $C^1$ path $a \le t \le b \ra f_t \in \Hd \setminus \left\{0\right\}$ be given. We denote by
 $$\dot f_t = \frac{df_t}{dt}$$ 
 the derivative of the path with respect to $t$, and, for any $g \in \Hd$,
 $$\left\| g \right\|_{f_t} = \left\| \Pi_{f_t^\perp}g \right\|/ \left\|f_t\right\|$$
 the norm of the projection of $g$ onto the subspace orthogonal to $f_t$ divided by the norm of $f_t$.
 The length of $(f_t)$ in $\P(\Hd)$ is given by
 $$l(b) = \int_a^b {\left\| \dot f_t \right\|_{f_t}}dt.$$
 When $\left\|f_t\right\|=1$ for each $t$ we have
 $$l(b) = \int_a^b \left\| \dot f_t \right\|dt.$$
 \item The condition length of the path $(f_t, x_t) \in \left(\mathcal {H}_{(d)} \setminus \left\{0\right\} \right) \times \left(\mathbb{C}^{n+1} \setminus \left\{0\right\} \right)$, $a \le t  \le b$, is
 $$L(b) = \int_a^b \left(\left\| \dot f_t \right\|_{f_t}^2 + \left\| \dot x_t \right\|_{x_t}^2\right)^{1/2}\mu(f_t,x_t)dt,$$
where $\dot x_t$ is the derivative of the path $x_t$ with respect to $t$, and where the norm $\left\| \dot x_t \right\|_{x_t}$ is defined as in the previous item with $\left\| \dot f_t \right\|_{f_t}$. When $\left\|f_t\right\|=\left\|x_t\right\|=1$ for each $t$ we have
$$L(b) = \int_a^b \left(\left\| \dot f_t \right\|^2 + \left\| \dot x_t \right\|^2\right)^{1/2}\mu(f_t,x_t)dt.$$

We will also use some invariants related with non homogeneous polynomial systems. For $(d) = (d_1,\ldots,d_n) \in {\mathbb N}^n$, let ${\mathcal P}_{(d)} = \prod_{i=1}^n {\mathcal P}_{d_i}$ be the set of polynomial systems $F = (F_1, \ldots , F_n) : \mathbb{C}^{n}\rightarrow \mathbb{C}^n$ in the variables $X = (X_1, \ldots , X_n)$, of respective degrees $\deg(F_i) \leq d_i,~1\leq i\leq n$. The homogeneous counterpart of $F$ is the system $f=(f_1,\ldots,f_n) \in \Hd$ defined by
$$f_i(x_0, x_1, \ldots ,x_n) = x_0^{d_i}F_i\left(\frac{x_1}{x_0}, \ldots ,\frac{x_n}{x_0}\right).$$ The norm on ${\mathcal P}_{(d)}$ is defined by $\left\|F\right\| = \left\|f\right\|$. We also let:
\item $\be(F,X) =  \lVert DF(X)^{-1}F(X) \rVert$.
\item $\ga(F,X) =  \max_{k \geq 2}  \lVert DF(X)^{-1}\frac{D^{k}F(X)}{k!} \rVert^{1/(k-1)}$.
\item $\al(F,X) = \be(F,X) \ga(F,X).$
\end{enumerate}
\end{definition}

\section{Variation of the condition number}\label{sect-3}

A necessary ingredient for the proof of Theorem~\ref{main-homotopy} is the following theorem which gives the variations of the condition number when both the system and the point vary:

\begin{theorem} \label{the-1}
Let two nonzero systems $f, g \in {\mathcal H}_{(d)}$, and two nonzero vectors $x, y \in \C^{n+1}$ be given such that
$\rank  Df(x) |_{x^\perp} = n$, $u \leq 1/20$, and $v \leq 1/20$. Then $\rank  Dg(y) |_{y^\perp} = n$, and
$$(1 - 3.805 u - v) \mu(g,y) \leq \mu(f,x) \leq (1 + 3.504 u + v) \mu(g,y).$$
\end{theorem}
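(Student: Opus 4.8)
The plan is to reduce the joint variation estimate to two one-variable variations — one in which only the point moves ($x \rightsquigarrow y$, with the system fixed) and one in which only the system moves ($f \rightsquigarrow g$, with the point fixed) — and then compose them. So first I would establish two lemmas of the form $\mu(f,x) \le (1+c_1 u)\mu(f,y)$ and $\mu(f,x)\ge(1-c_1'u)\mu(f,y)$ for $u\le 1/20$, and similarly $\mu(f,x)\le(1+v)\mu(g,x)$, $\mu(f,x)\ge(1-v)\mu(g,x)$. The second pair should be essentially immediate: $\mu(f,x)=\|f\|\,\|Df(x)|_{x^\perp}^{-1}\diag(\sqrt{d_i}\|x\|^{d_i-1})\|$ depends on $f$ only through the normalized system $f/\|f\|$ in the operator factor and linearly in $\|f\|$ in the scalar factor, and a short computation (the reverse triangle inequality applied to $f/\|f\|$ and $g/\|g\|$) gives a multiplicative distortion controlled by $\|f/\|f\| - g/\|g\|\|$; the factor $D^{1/2}$ in the definition $v = D^{1/2}\mu(f,x)\|f/\|f\|-g/\|g\|\|$ is what converts the additive perturbation of the inverse operator into the clean bound. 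Here one uses that the perturbation of $Df(x)|_{x^\perp}$ is bounded by $\|f-g\|$ times a $\sqrt{d_i}$-type factor, together with the standard Banach-lemma estimate $\|(A+E)^{-1}\| \le \|A^{-1}\|/(1-\|A^{-1}E\|)$ and its lower counterpart.

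For the point-variation lemma, the key is that moving from the line through $x$ to the line through $y$ is an isometry-up-to-rescaling on the ambient space, so one can track how $Df(x)|_{x^\perp}$ transforms. I would parametrize by a unit-speed geodesic in $\mathbb P(\mathbb C^{n+1})$ from $[x]$ to $[y]$ of length $d_R(x,y)$, differentiate $\mu$ along it, and bound the logarithmic derivative. The derivative of the operator $Df(z)|_{z^\perp}$ along the path involves $D^2f$, which is controlled by $\gamma_0$-type quantities, and ultimately by $D$ and $\mu$ through the standard higher-derivative estimates (the "$\gamma \le$ something $\cdot D^{3/2}\mu$" inequality, cf. the $u = D^{3/2}\mu(f,x)d_R(x,y)/2$ scaling). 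Integrating the logarithmic derivative over a path of length $d_R(x,y)$ produces a bound of the form $\exp(\pm\, c\, D^{3/2}\mu\, d_R(x,y)) = \exp(\pm 2cu)$, and then on the range $u\le 1/20$ one replaces the exponentials by the explicit affine bounds $1+3.504u$ and $1-3.805u$ (these specific constants come from optimizing the linearization of $e^{2cu}/(1-\text{lower order})$ over $u\in[0,1/20]$). One must also check the rank/nonsingularity propagates: since $\mu(g,y)$ stays finite under these perturbations (the denominator $1-3.805u-v$ is positive for $u,v\le 1/20$), $Dg(y)|_{y^\perp}$ remains invertible, i.e. $\rank Dg(y)|_{y^\perp}=n$.

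Finally I would compose: $\mu(f,x)\le(1+3.504u)\mu(f,y)\le(1+3.504u)(1+v)\mu(g,y)$, and since cross terms $uv$ are tiny on $[0,1/20]^2$ they get absorbed, yielding $\mu(f,x)\le(1+3.504u+v)\mu(g,y)$ after a small slack in the constant; symmetrically for the lower bound with $3.805$. The main obstacle I anticipate is not the structure of the argument but the bookkeeping of explicit constants: getting from the clean exponential bounds to the stated affine constants $3.504$ and $3.805$ requires carefully tracking every universal constant in the higher-derivative / $\gamma_0$ estimates (which the paper emphasizes are usually left implicit in the literature), verifying the linearizations are valid on all of $[0,1/20]$, and checking that the composition step does not erode them — this is exactly the "estimate universal constants" difficulty flagged in the introduction. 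A secondary subtlety is that one must be careful whether to move the point first or the system first, or to use a simultaneous path; using a two-segment path $(f,x)\to(f,y)\to(g,y)$ is cleanest and avoids having to control the interaction of the two perturbations in a single differential inequality.
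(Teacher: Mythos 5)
Your overall strategy---split the joint perturbation into a point-only move and a system-only move, prove a multiplicative distortion bound for each, and compose, absorbing the $uv$ cross terms---is exactly the paper's strategy; the paper merely composes in the mirror order $(f,x)\to(g,x)\to(g,y)$ (Lemmas \ref{lem-10} and \ref{lem-11} for the system step, Lemmas \ref{lem-8} and \ref{lem-9} for the point step, assembled in Lemma \ref{lem-12}), and your system-variation lemma is verbatim Lemma \ref{lem-10}. Where you genuinely diverge is in the point-variation step: you propose to differentiate $\log\mu$ along a geodesic and integrate, whereas the paper argues algebraically, expanding $Df(y)=Df(x)+\sum_{k\ge1}\frac{D^{k+1}f(x)}{k!}(y-x)^k$, bounding the tail by $\gamma_0\le\frac{D^{3/2}}{2}\mu$, invoking Neumann's perturbation theorem, and---crucially---handling the change of domain from $x^\perp$ to $y^\perp$ by a separate geometric lemma involving the angle $\theta_x$ between $x$ and $\ker Df(x)$ (Lemmas \ref{lem-2} and \ref{lem-5}). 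Your route is viable in principle (it is the log-Lipschitz property of $\mu$ in the condition metric), but note that the differential inequality is self-referential ($\frac{d}{dt}\log\mu_t\lesssim D^{3/2}\mu_t$), so Gronwall gives a bound of the form $\mu_0/(1-cu)$ rather than $e^{cu}$, and one must justify finiteness of $\mu$ along the whole path; the paper faces the same continuation issue and resolves it with an explicit open--closed argument along the great-circle arc in Lemma \ref{lem-7}. Your one-line justification of $\rank Dg(y)|_{y^\perp}=n$ from positivity of $1-3.805u-v$ is circular unless that continuation step is done first.

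The one place where your plan falls short of the statement as written is the provenance of the constants $3.504$ and $3.805$. They do not come from linearizing an exponential: $3.805$ arises from the self-bounding inequality $M\le(1+Mu)(1-u)^2/\psi(u)$ in Lemma \ref{lem-7} (solved for $M$ and expanded as a rational function of $u$ on $[0,1/19]$), and $3.504$ from the expansion $A(u,v)=1+3u+v+u\cdot r(u,v)$ with $r\le 0.504$ on $[0,1/20]^2$. The asymmetry between the two constants reflects the fact that $u$ and $v$ are always normalized by $\mu(f,x)$ at the base pair, so the forward and backward distortions are not symmetric; a symmetric geodesic-integration bound would have to be re-expressed at the endpoint, and it is not clear a priori that it lands under the stated thresholds. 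Since producing these explicit constants is the entire content of the theorem (the qualitative statement is classical), this bookkeeping---which you correctly flag but defer---is the substantive remaining work.
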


\begin{corollary} \label{cor-1}  Let $0 < \ep \leq 1/4$, two nonzero systems $f, g \in {\mathcal H}_{(d)}$,
and two nonzero vectors $x, y \in \C^{n+1}$ be given such that $u \leq \ep/5$, $v \leq \ep/5$, and
$\rank  Df(x) |_{x^\perp} = n$. One has $\rank  Dg(y) |_{y^\perp} = n$, and
$$(1 - \ep) \mu(g,y) \leq \mu(f,x) \leq (1 + \ep) \mu(g,y).$$
\end{corollary}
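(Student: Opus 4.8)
The plan is to obtain Corollary~\ref{cor-1} as an immediate specialization of Theorem~\ref{the-1}: the corollary merely repackages the theorem's estimate, trading the explicit constants $3.805$ and $3.504$ for the single, more convenient tolerance $\ep$.

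First I would verify the hypotheses of Theorem~\ref{the-1}. Since $0 < \ep \le 1/4$, we have $\ep/5 \le 1/20$, so the assumptions $u \le \ep/5$ and $v \le \ep/5$ give $u \le 1/20$ and $v \le 1/20$; together with $\rank Df(x)|_{x^\perp}=n$ this is exactly what Theorem~\ref{the-1} requires. Applying it yields $\rank Dg(y)|_{y^\perp} = n$ (in particular $0 < \mu(g,y) < \infty$) and
$$(1 - 3.805\,u - v)\,\mu(g,y) \;\le\; \mu(f,x) \;\le\; (1 + 3.504\,u + v)\,\mu(g,y).$$

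Then I would bound the coefficients using $u, v \le \ep/5$:
$$3.805\,u + v \le (3.805+1)\tfrac{\ep}{5} = 0.961\,\ep \le \ep, \qquad 3.504\,u + v \le (3.504+1)\tfrac{\ep}{5} = 0.9008\,\ep \le \ep.$$
Since $\mu(g,y) \ge 0$, substituting these into the previous display gives
$$(1-\ep)\,\mu(g,y) \le (1 - 0.961\,\ep)\,\mu(g,y) \le \mu(f,x) \le (1 + 0.9008\,\ep)\,\mu(g,y) \le (1+\ep)\,\mu(g,y),$$
which is the assertion.

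There is essentially no obstacle here beyond Theorem~\ref{the-1} itself; the only points to check are the arithmetic facts that $3.805 + 1$ and $3.504 + 1$ are both at most $5$, so that the $\ep/5$ normalization absorbs them, and that $\mu(g,y)$ cannot vanish (guaranteed by $\rank Dg(y)|_{y^\perp} = n$), so no sign reversal occurs when multiplying through by it.
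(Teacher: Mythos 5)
Your proposal is correct and follows exactly the paper's own argument: verify that $\ep \le 1/4$ forces $u,v \le 1/20$, apply Theorem~\ref{the-1}, and absorb $3.805u+v$ and $3.504u+v$ into $\ep$ via the $\ep/5$ bounds. The arithmetic $(3.805+1)/5 < 1$ and $(3.504+1)/5 < 1$ is the whole point, and you have it right.
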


The proof of these results is obtained from the following series of lemmas.

\begin{lemma}\label{high-der}
Let $f \in \Hd$ and $x \in \C^{n+1}$. We have
\begin{enumerate}
\item For any $i=1 \ldots n$,  $\left\|f_i(x)\right\| \leq \left\| f_i \right\| \left\| x \right\|^{d_i}$, so that $\left\|f(x)\right\| \leq 1$ when $f$ and $x$ are normalized.
\item For any $i=1 \ldots n$,  $\left\|Df_i(x)\right\| \leq d_i \left\| f_i \right\| \left\| x \right\|^{d_i - 1}$, so that $\left\| Df(x) \right\| \leq D$ when $f$ and $x$ are normalized.
\item $\gamma_0(f,x) \le \frac{D^{3/2}}{2} \mu(f,x).$
\end{enumerate}
\end{lemma}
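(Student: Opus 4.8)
The plan is to prove the three items in order, since (1) and (2) are standard norm estimates and (3) follows by combining (2)-type bounds with the definition of $\ga_0$ and $\mu$. For item (1), I would expand $f_i(x) = \sum_{|\al|=d_i} f_{i,\al} x^\al$ and apply the Cauchy--Schwarz inequality with respect to the Bombieri--Weyl inner product: writing $x^\al = \prod x_j^{\al_j}$, one has $|f_i(x)| \le \|f_i\|\,\bigl(\sum_{|\al|=d_i} \tfrac{d_i!}{\al_0!\cdots\al_n!}|x|^{2\al}\bigr)^{1/2}$, and by the multinomial theorem the sum equals $(\sum_j |x_j|^2)^{d_i} = \|x\|^{2d_i}$. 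Hence $|f_i(x)| \le \|f_i\|\,\|x\|^{d_i}$; normalizing $\|f\|=\|x\|=1$ and summing the squares over $i$ gives $\|f(x)\| \le 1$ since $\sum_i \|f_i\|^2 = \|f\|^2 = 1$.

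For item (2), I would apply the same reasoning to each partial derivative $\partial f_i/\partial x_j$, which is a homogeneous polynomial of degree $d_i - 1$; a direct computation of the Bombieri--Weyl norm of $\partial f_i/\partial x_j$ (or the known invariance estimate, e.g. from \cite{BCSS}) gives $\|Df_i(x)\| \le d_i \|f_i\| \|x\|^{d_i-1}$ for the operator norm of the gradient as a linear functional. Summing over $i$ as a block-diagonal-type estimate and using $d_i \le D$ together with $\sum_i \|f_i\|^2 = 1$ and $\|x\|=1$ yields $\|Df(x)\| \le D$.

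For item (3), recall $\ga_0(f,x) = \|x\|\max_{k\ge 2}\bigl\|Df(x)|_{x^\perp}^{-1}\tfrac{D^kf(x)}{k!}\bigr\|^{1/(k-1)}$ and $\mu(f,x) = \|f\|\,\bigl\|Df(x)|_{x^\perp}^{-1}\diag(\sqrt{d_i}\|x\|^{d_i-1})\bigr\|$. The strategy is to bound each high-order term: by the analogue of item (1)–(2) applied to the $k$-th derivative tensor $D^kf_i(x)$, which has degree $d_i - k$, one gets $\|D^kf_i(x)/k!\| \le \binom{d_i}{k}\|f_i\|\,\|x\|^{d_i-k}$, so writing $M = \diag(\sqrt{d_i}\|x\|^{d_i-1})$ and $A = Df(x)|_{x^\perp}^{-1}M$ one factors $Df(x)|_{x^\perp}^{-1}\tfrac{D^kf(x)}{k!} = A\,M^{-1}\tfrac{D^kf(x)}{k!}$ and estimates $\|M^{-1}D^kf(x)/k!\|$ coordinatewise using $\binom{d_i}{k}/\sqrt{d_i} \le d_i^{k-1/2}/k! \le D^{k-1}\sqrt{D}$ together with $\|f_i\|/\|f\| \le 1$; taking $(k-1)$-th roots and multiplying by $\|x\|$ the exponents of $\|x\|$ cancel and one obtains $\ga_0(f,x) \le \|A\|\cdot\|f\|^{-1}\cdot D^{(k-1/2)/(k-1)}\le \frac{D^{3/2}}{2}\mu(f,x)$ after noting $\|A\| = \mu(f,x)/\|f\|$ and that the worst case $k=2$ gives the exponent $3/2$ (the factor $1/2$ comes from $\binom{d_i}{k} \le d_i^k/2$ for $k\ge 2$, or more carefully from the classical Higher Derivative Estimate in \cite{Bez1,BCSS}). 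I expect item (3) to be the main obstacle: getting the precise constant $D^{3/2}/2$ rather than a cruder bound requires carefully tracking the binomial coefficients $\binom{d_i}{k}$ against the normalization factors $\sqrt{d_i}\|x\|^{d_i-1}$ and verifying that the maximum over $k\ge 2$ is attained (in the relevant sense) at $k=2$; this is essentially the projective version of the classical higher-derivative estimate of Shub--Smale, so I would either cite it directly or reproduce the short induction on $k$.
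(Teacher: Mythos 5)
The paper's entire proof of this lemma is a citation: items (1)--(2) are Proposition 1 and item (3) is Theorem 2, p.~267 of \cite{BCSS}, so your reconstruction is by definition a different (more self-contained) route. Your proofs of (1) and (2) are correct: (1) is exactly the reproducing-kernel/Cauchy--Schwarz argument for the Bombieri--Weyl inner product, and (2) is the standard first-derivative estimate; summing squares over $i$ closes both. For (3) your factorization through $M=\diag(\sqrt{d_i}\|x\|^{d_i-1})$ and the estimate $\|D^kf_i(x)/k!\|\le\binom{d_i}{k}\|f_i\|\|x\|^{d_i-k}$ is indeed the skeleton of the BCSS proof, but the two combinatorial shortcuts you offer do not quite close it as written. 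First, $\binom{d_i}{k}/\sqrt{d_i}\le D^{k-1}\sqrt D$ is too weak: what you need is $\binom{d_i}{k}/\sqrt{d_i}\le(D^{3/2}/2)^{k-1}$, and $D^{k-1/2}$ exceeds $D^{3/2}/2$ already at $k=2$. Second, the proposed repair $\binom{d_i}{k}\le d_i^k/2$ still fails for $k\ge3$ and small $D$ (e.g.\ $k=3$, $D=2$ requires $D^{5/2}/2\le D^{3}/4$, i.e.\ $D\ge4$); you need the full $1/k!$, i.e.\ the classical inequality $\binom{d}{k}\le\sqrt d\,(D^{3/2}/2)^{k-1}$, which is exactly the content of the higher-derivative lemma you say you would cite or re-derive by induction. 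Finally, after taking $(k-1)$-th roots you silently replace $\|A\|^{1/(k-1)}$ by $\|A\|$; this requires $\|A\|=\mu(f,x)/\|f\|\ge1$, which is Lemma~\ref{lem-1} of the paper and should be invoked explicitly. Since you explicitly defer to the classical Shub--Smale/BCSS higher-derivative estimate for the sharp constant, the argument is acceptable and ultimately rests on the same source as the paper; just be aware that the ``quick'' binomial bounds in your sketch are not a substitute for that estimate.
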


\proof These inequalities come from Proposition 1, and Theorem~2, p. 267, in \cite{BCSS}.
\endproof

\begin{lemma} \label{lem-1} $1 \leq \sqrt{n} = \min_{f,x}\mu(f,x).$
\end{lemma}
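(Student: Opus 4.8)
The plan is to prove the equality $\min_{f,x}\mu(f,x)=\sqrt n$ in two halves — the inequality $\mu(f,x)\ge\sqrt n$ for every nonzero pair, and the attainment of the value $\sqrt n$ — with the bound $1\le\sqrt n$ being immediate from $n\ge 2$.

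\emph{Lower bound.} If $\rank Df(x)|_{x^\perp}<n$ then $\mu(f,x)=\infty$ and there is nothing to prove, so assume $M:=Df(x)|_{x^\perp}$ is invertible. Since $\mu(f,x)$ is invariant under rescaling of $f$ and of $x$, I would normalize $\|f\|=\|x\|=1$. Writing $\Delta=\diag(\sqrt{d_i})$, one has $\mu(f,x)=\|M^{-1}\Delta\|=\|(\Delta^{-1}M)^{-1}\|=\sigma_{\min}(\Delta^{-1}M)^{-1}$, where $\sigma_{\min}$ denotes the least singular value of the $n\times n$ matrix $\Delta^{-1}M$. The $i$-th row of $\Delta^{-1}M$ is $d_i^{-1/2}\,Df_i(x)|_{x^\perp}$. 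The crucial input is a sharpening of Lemma~\ref{high-der}(2) on $x^\perp$, namely $\|Df_i(x)|_{x^\perp}\|\le\sqrt{d_i}\,\|f_i\|\,\|x\|^{d_i-1}$; with $\|x\|=1$ this forces each row of $\Delta^{-1}M$ to have norm $\le\|f_i\|$, so $\|\Delta^{-1}M\|_F^2\le\sum_i\|f_i\|^2=\|f\|^2=1$. Since the squares of the $n$ singular values of $\Delta^{-1}M$ sum to $\|\Delta^{-1}M\|_F^2\le 1$, we get $\sigma_{\min}(\Delta^{-1}M)^2\le 1/n$, hence $\mu(f,x)\ge\sqrt n$.

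\emph{The sharpened derivative bound.} Using the unitary invariance of the Bombieri--Weyl inner product (Definition~\ref{definitions}(1)) and of $\mu$, I would reduce to $x=e_0=(1,0,\dots,0)$ with $\|x\|=1$. Writing $f_i=\sum_{|\alpha|=d_i}c_\alpha z^\alpha$, only the derivatives $\partial f_i/\partial z_j(e_0)$ with $1\le j\le n$ survive on $e_0^\perp$, and $\partial f_i/\partial z_j(e_0)=c_{\alpha(j)}$ where $\alpha(j)$ is the multi-index with $d_i-1$ in slot $0$ and $1$ in slot $j$. Thus $\|Df_i(e_0)|_{e_0^\perp}\|^2=\sum_{j=1}^n|c_{\alpha(j)}|^2$, whereas in $\|f_i\|^2$ each such coefficient carries the weight $\frac{(d_i-1)!\,1!}{d_i!}=1/d_i$; hence $\|f_i\|^2\ge\frac1{d_i}\|Df_i(e_0)|_{e_0^\perp}\|^2$, which is exactly the claimed inequality. (A bound of this kind is part of the standard material of \cite{BCSS} cited for Lemma~\ref{high-der}; in any case it follows from this direct computation.)

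\emph{Attainment.} For the minimizer I would take the diagonal system $f_i=d_i^{1/2}z_0^{d_i-1}z_i$ with $x=(1,0,\dots,0)$ from the Introduction: a direct computation gives $\|f_i\|^2=d_i\cdot\frac1{d_i}=1$, so $\|f\|=\sqrt n$, and $Df(x)|_{x^\perp}=\diag(\sqrt{d_i})$, so $Df(x)|_{x^\perp}^{-1}\diag(\sqrt{d_i}\|x\|^{d_i-1})=I_n$ and therefore $\mu(f,x)=\|f\|=\sqrt n$. I expect the only step with real content to be the sharpened bound $\|Df_i(x)|_{x^\perp}\|\le\sqrt{d_i}\,\|f_i\|\,\|x\|^{d_i-1}$ on the orthogonal complement of $x$; everything else is routine singular-value / Frobenius-norm bookkeeping together with one explicit evaluation.
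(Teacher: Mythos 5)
Your proof is correct and follows essentially the same route as the paper: both reduce to $x=e_0$ by unitary invariance, bound the Frobenius norm of $\diag(d_i^{-1/2})Df(x)|_{x^\perp}$ by $\lVert f\rVert$, convert this into a bound on the smallest singular value, and exhibit the same extremal system $f_i=\sqrt{d_i}\,z_0^{d_i-1}z_i$ at $e_0$. The only difference is cosmetic: you spell out the coefficient computation behind the Frobenius bound, which the paper leaves implicit.
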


\proof Let given a matrix $A \in \C^{n \times m}$, $m \geq n$, and $A = U \Si V^*$ a singular value decomposition with
$$\Si = \diag(\si_1(A) \geq \ldots \geq \si_n(A))\in \C^{n \times m},$$
$\si_i(A) \geq 0$, $U \in \U_n$, $V \in \U_m$ unitary matrices.
We define
$$\ka(A) = \si_n(A)^{-1} \lVert A \rVert_F  = \si_n(A)^{-1} \sqrt{\si_1(A)² + \ldots + \si_n(A)²}$$
when $\si_n(A) > 0$, and $\infty$ otherwise. We see easily that
$$\min_A \ka(A) =  \sqrt{n},$$
and this minimum is obtained when $\si_i(A) = 1$, $1 \leq i \leq n$.

For the case of polynomial systems we have
$$\mu(f,x) = \lVert f \rVert \lVert \left(  \diag(d_i^{-1/2}) Df(x) |_{x^\perp}\right)^{-1} \rVert =
\lVert f \rVert \si_n\left( \diag(d_i^{-1/2}) Df(x) |_{x^\perp}\right) ^{-1}.$$
Using the unitary invariance of the norm in ${\mathcal H}_{(d)}$, that is $\lVert f \circ U \rVert = \lVert f \rVert$ for any $U \in \U_{n+1}$,
considering a $U$ such that $Ue_0 = x$ with $e_0 = (1, 0 , \ldots , 0)^T \in \C^{n+1},$ we see that
$$\lVert   \diag(d_i^{-1/2}) Df(x) |_{x^\perp} \rVert_F \leq \lVert f \rVert.$$
Thus, our previous estimate shows that $\mu(f,x) \geq \sqrt{n}.$

To prove the equality $\sqrt{n} = \min_{f,x}\mu(f,x)$ we use the unitary invariance of the condition number
$$\mu(f,x) = \mu(f \circ U, U^* x)$$
for any $U \in \U_{n+1}$, and the equality $\sqrt{n} = \mu(f,e_0)$ when $f_i(z) = \sqrt{d_i}z_0^{d_i - 1}z_i,$ $1 \leq i \leq n$,
and $e_0 = (1, 0 , \ldots , 0)^T \in \C^{n+1}.$
\endproof

\begin{lemma}  \label{lem-0} For any $f$ and $x$ one has $d_R(x,y) \leq u/ \sqrt{2n} \leq u/ 2$, and $d_R(x,y)\de(f,x) \le u$.
\end{lemma}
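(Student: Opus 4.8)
The key observation is that, although both claims are written with an auxiliary vector $y$, neither of them actually depends on $y$: since $u = D^{3/2}\mu(f,x)\,d_R(x,y)/2$ is proportional to $d_R(x,y)$, the factor $d_R(x,y)$ cancels from both sides of each inequality. Thus the first claim $d_R(x,y) \le u/\sqrt{2n}$ is equivalent to $2\sqrt{2n} \le D^{3/2}\mu(f,x)$, and the second, $d_R(x,y)\,\de(f,x)\le u$, is equivalent to $\de(f,x) \le \tfrac12 D^{3/2}\mu(f,x)$.

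For the first inequality I would use $D \ge 2$ together with Lemma \ref{lem-1}, which gives $\mu(f,x) \ge \sqrt{n}$; then $D^{3/2}\mu(f,x) \ge 2^{3/2}\sqrt n = 2\sqrt{2n}$, as needed. The remaining bound $u/\sqrt{2n} \le u/2$ is just $\sqrt{2n} \ge 2$, which holds since $n \ge 2$.

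For the second inequality it suffices to prove $\de(f,x) \le \sqrt{D}\,\mu(f,x)$: combined with $2\sqrt D \le D^{3/2}$ (again because $D \ge 2$) this yields $\de(f,x) \le \tfrac12 D^{3/2}\mu(f,x)$ and hence $d_R(x,y)\,\de(f,x) \le \tfrac12 D^{3/2}\mu(f,x)\,d_R(x,y) = u$. To bound $\de$, I would factor the diagonal matrix as $\diag(d_i) = \diag(\sqrt{d_i}\|x\|^{d_i-1})\,\diag(\sqrt{d_i}\|x\|^{1-d_i})$, so that
$$Df(x)|_{x^\perp}^{-1}\diag(d_i)f(x) = \left(Df(x)|_{x^\perp}^{-1}\diag(\sqrt{d_i}\|x\|^{d_i-1})\right)\diag(\sqrt{d_i}\|x\|^{1-d_i})f(x).$$
The operator norm of the first parenthesis is $\mu(f,x)/\|f\|$ by the definition of $\mu$, while the vector inside has norm at most $\|x\|\sqrt{D}\,\|f\|$: indeed, by part (1) of Lemma \ref{high-der}, $\sum_i d_i\|x\|^{2-2d_i}|f_i(x)|^2 \le D\|x\|^2\sum_i \|f_i\|^2 = D\|x\|^2\|f\|^2$. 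Multiplying these two estimates and dividing by $\|x\|$ gives $\de(f,x) \le \sqrt D\,\mu(f,x)$.

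The argument is entirely elementary; the only place requiring a little care is the choice of factorization of $\diag(d_i)$ and the bookkeeping between $\|f\|$ and the norms of the components $f_i$. It is worth noting that the inequalities are tight exactly at $D = 2$ and $n = 2$ (for instance $u/\sqrt{2n}\le u/2$ fails for $n = 1$), so the standing hypotheses $D\ge 2$ and $n\ge 2$ enter in an essential way.
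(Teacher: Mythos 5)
Your proof is correct and follows essentially the same route as the paper: the first inequality from $\mu(f,x)\ge\sqrt n$ (Lemma \ref{lem-1}) and $D\ge 2$, and the second from the bound $\de(f,x)\le\sqrt D\,\mu(f,x)$ obtained by splitting $\diag(d_i)$ into two diagonal factors and invoking $|f_i(x)|\le\|f_i\|\,\|x\|^{d_i}$. The only cosmetic difference is that the paper normalizes $\|f\|=\|x\|=1$ up front, which makes your norm bookkeeping unnecessary.
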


\proof We suppose that both system $f$ and vectors $x, y$ are normalized. We have by Lemma \ref{lem-1}
$$u = \frac{D^{3/2}}{2}\mu(f,x)d_R(x,y) \geq \frac{2^{3/2}}{2}\sqrt{n}d_R(x,y).$$
For the second inequality we have
\begin{eqnarray*}
d_R(x,y)\de(f,x) &\leq& d_R(x,y)\lVert Df(x)|_{x^\perp}^{-1}\diag(d_i^{1/2}) \rVert
\lVert \diag(d_i^{1/2}) \rVert \lVert f(x) \rVert \\
&\leq&
d_R(x,y) \mu(f,x) D^{1/2} \lVert f(x) \rVert \\
&\leq& d_R(x,y) \mu(f,x) D^{3/2}/2 \\ &=& u
\end{eqnarray*}
thanks to the inequalities $2 \le D$, $\lVert f_i(x) \rVert \leq \lVert f_i \rVert \lVert x \rVert^{d_i}$
(Lemma \ref{high-der}), and the hypothesis $\lVert f \rVert = \lVert x \rVert = 1$ .
\endproof

\begin{lemma}  \label{lem-2} When $\rank  Df(x) |_{x^\perp} = n$ we have
$$\rVert Df(x)|_{x^\perp}^{-1}Df(x)|_{y^\perp} \lVert \leq 1 + d_P(x,y) \tan \th_x.$$
\end{lemma}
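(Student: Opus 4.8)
The plan is to recognize the operator $Df(x)|_{x^\perp}^{-1}Df(x)|_{y^\perp}$ as the restriction to $y^\perp$ of the (in general non-orthogonal) projection onto $x^\perp$ along $\ker Df(x)$, and then to estimate that projection by an elementary orthogonal decomposition. Since the three quantities in the statement are invariant under rescaling $f$, $x$ and $y$ separately, I would first normalize $\|x\|=\|y\|=1$. The hypothesis $\rank Df(x)|_{x^\perp}=n$ means that $Df(x):\C^{n+1}\to\C^n$ has rank $n$, so $\ker Df(x)$ is a complex line; moreover this line cannot lie in $x^\perp$ (otherwise $Df(x)|_{x^\perp}$ would not be injective), i.e. $\theta_x<\pi/2$ and $\tan\theta_x$ is finite, and $\C^{n+1}=x^\perp\oplus\ker Df(x)$ as a (not necessarily orthogonal) direct sum. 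Writing any $w\in\C^{n+1}$ as $w=u+m$ with $u\in x^\perp$, $m\in\ker Df(x)$, we get $Df(x)(w)=Df(x)(u)$, hence $Df(x)|_{x^\perp}^{-1}Df(x)(w)=u=:\Pi(w)$. Thus $\|Df(x)|_{x^\perp}^{-1}Df(x)|_{y^\perp}\|=\sup\{\|\Pi(w)\| : w\in y^\perp,\ \|w\|=1\}$, and it suffices to bound $\|\Pi(w)\|$ for a unit $w\in y^\perp$.

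Fix a unit generator $k$ of $\ker Df(x)$, so $|\langle k,x\rangle|=\cos\theta_x$. For a unit $w\in y^\perp$ write $w-\Pi(w)=ck$ with $c\in\C$; the condition $\Pi(w)\perp x$ gives $\langle w,x\rangle=c\langle k,x\rangle$, hence $|c|\cos\theta_x=|\langle w,x\rangle|$. Decomposing orthogonally $w=\langle w,x\rangle x+w^\perp$ and $k=\langle k,x\rangle x+k^\perp$ with $w^\perp,k^\perp\in x^\perp$, the $x$-components of $w$ and $ck$ cancel, so $\Pi(w)=w^\perp-ck^\perp$. Since $\|w^\perp\|\le\|w\|=1$ and $\|k^\perp\|=\sin\theta_x$, this yields
\[
\|\Pi(w)\|\le\|w^\perp\|+|c|\,\|k^\perp\|\le 1+|c|\sin\theta_x=1+|\langle w,x\rangle|\tan\theta_x .
\]
It remains to bound $|\langle w,x\rangle|$: writing $x=\langle x,y\rangle y+x^\perp$ with $x^\perp\in y^\perp$, for $w\in y^\perp$ we have $\langle w,x\rangle=\langle w,x^\perp\rangle$, so $|\langle w,x\rangle|\le\|x^\perp\|=\sqrt{1-|\langle x,y\rangle|^2}=\sin d_R(x,y)=d_P(x,y)$ by the definition of $d_R$ and $d_P$. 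Combining the two displayed bounds and taking the supremum over $w$ gives $\|Df(x)|_{x^\perp}^{-1}Df(x)|_{y^\perp}\|\le 1+d_P(x,y)\tan\theta_x$.

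The argument is purely linear algebra, so there is no genuinely hard step; the one point that needs care is the identification of $Df(x)|_{x^\perp}^{-1}Df(x)$ with the oblique projection $\Pi$, and in particular the validity of the splitting $\C^{n+1}=x^\perp\oplus\ker Df(x)$ — which is exactly where the rank hypothesis is used, and which also covers the degenerate case $f(x)=0$: there Euler's relation gives $x\in\ker Df(x)$, so $\ker Df(x)=\C x$, $\theta_x=0$, $\Pi$ is the orthogonal projection, and the bound reads $\le 1$.
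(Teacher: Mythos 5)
Your proof is correct and follows essentially the same route as the paper's: both identify $Df(x)|_{x^\perp}^{-1}Df(x)|_{y^\perp}$ as the oblique projection onto $x^\perp$ along $\ker Df(x)$ restricted to $y^\perp$, and both estimate it by comparing with the orthogonal projection onto $x^\perp$, using $|\langle k,x\rangle|=\cos\theta_x$ and $\sup_{w\in y^\perp,\|w\|=1}|\langle w,x\rangle|=\sin\theta_{x,y}=d_P(x,y)$. Your version simply carries out in explicit inner-product computations what the paper phrases geometrically via a figure, and your remarks on why the rank hypothesis forces $\theta_x<\pi/2$ and on the degenerate case $f(x)=0$ are correct.
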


\proof
%
%
%
%
%
Take $u \in y^\perp$ and define $v =  Df(x)|_{x^\perp}^{-1}Df(x)|_{y^\perp} u$ so that
$$v = \left( u + \ker Df(x) \right) \cap x^\perp.$$
$v \in x^\perp$ is the projection of $u \in y^\perp$ along $\ker Df(x)$.
Let us denote by $w$ the orthogonal projection of $u$ onto $x^\perp$. See Figure 1.
\begin{figure}[htbp]
\centering
\includegraphics[angle = 90, width=0.8 \textwidth]{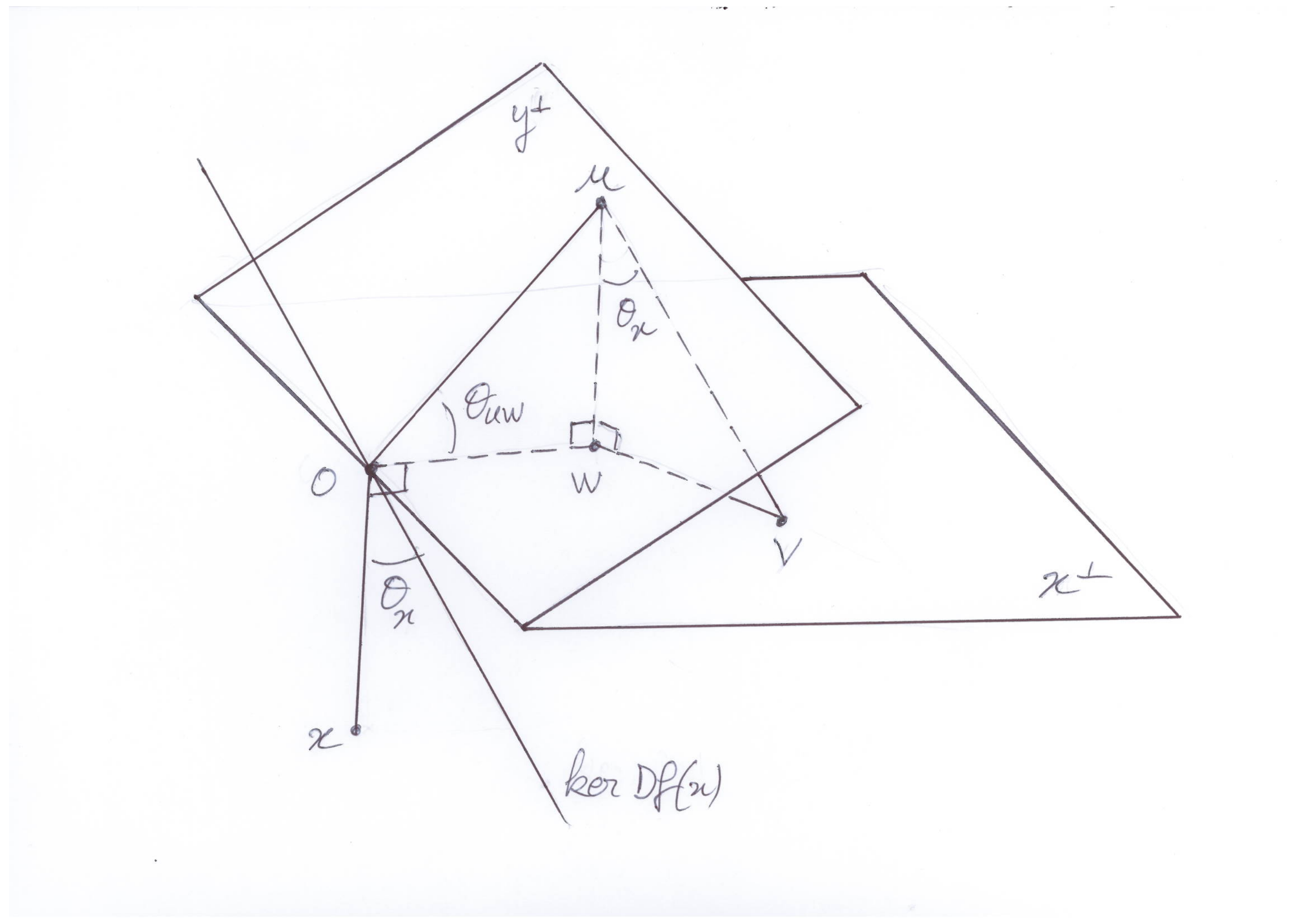}
\caption{}
\end{figure}
We have $\| w \| = \| u \| \cos \th_{u,w}$  and
$$\| w-v \| = \| w-u \| \tan \th_{x} =  \| u \| \sin \th_{u,w} \tan \th_{x},$$
so that
$$\| v \| \leq \| u \| (\cos \th_{u,w} + \sin \th_{u,w} \tan \th_{x}) \le  \| u \| (1 + \sin \th_{x,y} \tan \th_{x})$$
because $\th_{x^\perp,y^\perp} = \th_{x,y} = \max_{\begin{array}[pos]{l}
	\| u \| = 1\\
        u \in y^\perp
\end{array}
} \th_{u,w}.$ \endproof

\begin{lemma}  \label{lem-3} Assume that $\|x\|=\|y\|=1$. 
When $\rank  Df(x) |_{x^\perp} = n$ and $u<1$ one has
$$\rVert Df(x)|_{x^\perp}^{-1}Df(y)|_{y^\perp} \lVert \leq \frac{1}{(1-u)²} + d_P(x,y) \tan \th_x.$$
\end{lemma}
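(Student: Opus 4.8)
The plan is to expand $Df(y)$ as its (finite, hence exact) Taylor series around $x$, split off the first-order term, and bound the remaining terms using the projective invariant $\ga_0(f,x)$. Since $f : \C^{n+1}\to\C^n$ is polynomial, for every $v$ one has $Df(y)v = Df(x)v + \sum_{k\ge2}\frac{1}{(k-1)!}D^kf(x)\bigl((y-x)^{k-1},v\bigr)$; composing with $Df(x)|_{x^\perp}^{-1}$ and restricting $v$ to $y^\perp$ yields
\[
Df(x)|_{x^\perp}^{-1}Df(y)|_{y^\perp}=Df(x)|_{x^\perp}^{-1}Df(x)|_{y^\perp}+\sum_{k\ge2}\frac{1}{(k-1)!}Df(x)|_{x^\perp}^{-1}D^kf(x)\bigl((y-x)^{k-1},\,\cdot\,\bigr)\big|_{y^\perp}.
\]
The first summand is exactly the operator estimated in Lemma~\ref{lem-2}, so its norm is at most $1+d_P(x,y)\tan\th_x$. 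Thus everything reduces to showing the tail has norm at most $\frac{1}{(1-u)^2}-1$, after which the $+1$ and the $-1$ cancel and the claimed bound falls out.

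For the tail I would bound the $k$-th term, using $\|x\|=1$ and the definition of $\ga_0(f,x)$, by $k\bigl\|Df(x)|_{x^\perp}^{-1}\tfrac{D^kf(x)}{k!}\bigr\|\,\|x-y\|^{k-1}\le k\,\ga_0(f,x)^{k-1}\|x-y\|^{k-1}$ (restricting the last slot to $y^\perp$ only lowers the operator norm, and the other $k-1$ arguments contribute a factor $\|y-x\|^{k-1}$). Setting $t=\ga_0(f,x)\|x-y\|$, Lemma~\ref{high-der}(3) and the definition of $u$ give $t\le\ga_0(f,x)\,d_R(x,y)\le\frac{D^{3/2}}{2}\mu(f,x)\,d_R(x,y)=u<1$, provided one uses unit representatives with $\langle x,y\rangle\ge0$, for which $\|x-y\|=2\sin\bigl(d_R(x,y)/2\bigr)\le d_R(x,y)$. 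Then $\sum_{k\ge2}kt^{k-1}=\frac{1}{(1-t)^2}-1\le\frac{1}{(1-u)^2}-1$ by monotonicity of $s\mapsto(1-s)^{-2}$ on $[0,1)$, which is precisely the needed bound; adding the Lemma~\ref{lem-2} estimate finishes the proof.

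The individual steps are textbook $\ga$-theory; the point that genuinely needs care is the normalization. Because each $Df_i$ is homogeneous of degree $d_i$, rescaling $y$ by a unit complex number $\lambda$ multiplies $Df(y)|_{y^\perp}$ on the left by $\diag(\lambda^{d_i-1})$, which can change the norm of the composite when the $d_i$ are not all equal; so one must fix the representatives of $x$ and $y$ (e.g.\ by $\langle x,y\rangle\ge0$) both in order to have $\|x-y\|\le d_R(x,y)$ and in order to make the left-hand side the intended projective quantity. Verifying that the hypothesis $u<1$ is exactly what guarantees convergence of the geometric series, and that $\|x\|=\|y\|=1$ is what removes the $\|x\|$-factors from $\ga_0$, completes the bookkeeping — this is where I expect the only real friction to be, the rest being a routine power-series estimate.
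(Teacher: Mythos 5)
Your proof is correct and follows essentially the same route as the paper's: Taylor-expand $Df(y)$ around $x$, bound the zeroth-order term $Df(x)|_{x^\perp}^{-1}Df(x)|_{y^\perp}$ by Lemma~\ref{lem-2}, and control the tail by $\sum_{k\ge 1}(k+1)\ga_0(f,x)^k\|x-y\|^k \le (1-u)^{-2}-1$. Your extra care about choosing representatives with $\langle x,y\rangle\ge 0$ so that $\|x-y\|\le d_R(x,y)$ is a point the paper leaves implicit, but it does not change the argument.
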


\proof Since $Df(y) = Df(x) + \sum_{k \geq 1} \frac{D^{k+1}f(x)}{k!}(y-x)^k$ we get
$$ Df(x)|_{x^\perp}^{-1} Df(y)|_{y^\perp} =  Df(x)|_{x^\perp}^{-1} Df(x)|_{y^\perp} +
\sum_{k \geq 1} (k+1)  Df(x)|_{x^\perp}^{-1}\frac{D^{k+1}f(x)}{(k+1)!}(y-x)^k|_{y^\perp}.$$
Then, we use $\lVert Df(x)|_{x^\perp}^{-1}\frac{D^{k+1}f(x)}{(k+1)!} \rVert \leq \ga_0(f,x)^{k}$, Lemma \ref{lem-2} and
Lemma \ref{high-der} to obtain
\begin{eqnarray*}
\rVert Df(x)|_{x^\perp}^{-1}Df(y)|_{y^\perp} \lVert &\leq& 1 + d_P(x,y) \tan \th_x +
\sum_{k \geq 1} (k+1) \ga_0(f,x)^{k}\lVert x-y \rVert^k \\
&\leq&
\frac{1}{(1-u)²} + d_P(x,y) \tan \th_x
\end{eqnarray*}
\endproof

%

\begin{lemma}  \label{lem-5}  When $\rank  Df(x) |_{x^\perp} = n$ then
$$\tan \th_x = \de(f,x) \leq D^{1/2} \mu(f,x).$$
\end{lemma}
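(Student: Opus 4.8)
The plan is to establish the equality $\tan\th_x=\de(f,x)$ by combining Euler's relation with a (non‑orthogonal) direct‑sum decomposition of $\C^{n+1}$, and then to deduce the bound $\de(f,x)\le D^{1/2}\mu(f,x)$ by a routine submultiplicativity estimate. Since all quantities involved descend to $\P(\Hd)\times\P(\C^{n+1})$, I may normalize $\|x\|=\|f\|=1$ throughout.

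First, homogeneity of $f_i$ of degree $d_i$ gives Euler's identity $Df_i(x)x=d_if_i(x)$, i.e. $Df(x)\,x=\diag(d_i)f(x)$. The hypothesis $\rank Df(x)|_{x^\perp}=n$ forces $\ker Df(x)\cap x^\perp=\{0\}$, so $\ker Df(x)$ is a complex line and $\C^{n+1}=x^\perp\oplus\ker Df(x)$ as a direct sum. Write $x=p+q$ with $p\in x^\perp$, $q\in\ker Df(x)$. Then $Df(x)\,x=Df(x)\,p=Df(x)|_{x^\perp}\,p$ because $Df(x)\,q=0$, so applying $Df(x)|_{x^\perp}^{-1}\diag(d_i)$ to $f(x)$ and using Euler's identity gives
$$Df(x)|_{x^\perp}^{-1}\diag(d_i)f(x)=Df(x)|_{x^\perp}^{-1}Df(x)\,x=p,$$
hence $\de(f,x)=\|p\|$. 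Next I compute the angle $\th_x=\th_{x,\ker Df(x)}$ between $\C x$ and $\C q$. From $\langle p,x\rangle=0$ and $x=p+q$ one gets $\langle p,q\rangle=-\|p\|^2$, so $\langle x,q\rangle=\|q\|^2-\|p\|^2=\|x\|^2=1$ and $\|q\|^2=1+\|p\|^2$. Therefore $\cos\th_x=|\langle x,q\rangle|/(\|x\|\,\|q\|)=\|q\|^{-1}$ and $\sin^2\th_x=\|p\|^2/\|q\|^2$, so $\tan^2\th_x=\|p\|^2$ and $\tan\th_x=\|p\|=\de(f,x)$. (When $f(x)=0$ this degenerates correctly to $q=x$, $p=0$, $\th_x=0$.)

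Finally, for the inequality I factor $\diag(d_i)=\diag(\sqrt{d_i})\diag(\sqrt{d_i})$ and use submultiplicativity of the operator norm:
$$\de(f,x)=\big\|Df(x)|_{x^\perp}^{-1}\diag(\sqrt{d_i})\,\diag(\sqrt{d_i})f(x)\big\|\le \big\|Df(x)|_{x^\perp}^{-1}\diag(\sqrt{d_i})\big\|\,\big\|\diag(\sqrt{d_i})\big\|\,\|f(x)\|=D^{1/2}\mu(f,x)\,\|f(x)\|,$$
and $\|f(x)\|\le1$ by Lemma~\ref{high-der}(1), giving $\de(f,x)\le D^{1/2}\mu(f,x)$. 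The only place requiring care is the geometric bookkeeping in the middle step (the direct‑sum decomposition, which is \emph{not} orthogonal, and the resulting identities for $\langle x,q\rangle$ and $\|q\|$); the Euler relation and the norm estimate are immediate.
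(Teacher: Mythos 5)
Your proof is correct and follows essentially the same route as the paper's: Euler's identity identifies $Df(x)|_{x^\perp}^{-1}\diag(d_i)f(x)$ with the projection of $x$ onto $x^\perp$ along $\ker Df(x)$, whose norm is $\tan\th_x$, and the bound then follows from submultiplicativity together with $\|f(x)\|\le 1$. The only difference is that you explicitly verify $\|p\|=\tan\th_x$ via the identities $\langle x,q\rangle=1$ and $\|q\|^2=1+\|p\|^2$, a step the paper asserts without proof.
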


\proof We assume that $x$ and $f$ are normalized. Let
$y = Df(x)|_{x^\perp}^ {-1}Df(x)x$ be the projection of $x$ onto $x^ \perp$ along $\ker Df(x)$ so that
$\Vert y \Vert = \tan \th_{x}.$
By Euler's identity, one also has
$$y =  Df(x)|_{x^\perp}^ {-1}\diag(d_i)f(x)$$
thus,
\begin{eqnarray*}
\tan \th_x
&=& \de(f,x) \\
&\leq&
 \rVert Df(x)|_{x^\perp}^ {-1}\diag(d_i^{1/2})\lVert \rVert \diag(d_i^{1/2}) f(x)\lVert
\end{eqnarray*}
Using Lemma \ref{high-der} we obtain
$$\tan \th_x \leq D^{1/2}  \mu(f,x) .$$
\endproof

\begin{lemma}  \label{lem-6}  When $\rank  Df(x) |_{x^\perp} = n$ one has
$$\rVert Df(x)|_{x^\perp}^{-1}Df(x)|_{y^\perp} \lVert \leq 1 + \de(f,x) d_P(x,y).$$
Moreover, when $\|x\|=\|y\|$ and $u<1$, we have
$$\rVert Df(x)|_{x^\perp}^{-1}Df(y)|_{y^\perp} \lVert \leq \frac{1}{(1-u)²} + \de(f,x) d_P(x,y).$$
\end{lemma}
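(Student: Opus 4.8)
The plan is to observe that Lemma~\ref{lem-6} is an immediate combination of Lemmas~\ref{lem-2}, \ref{lem-3}, and \ref{lem-5}, so essentially no new work is needed beyond bookkeeping. For the first inequality, I would start from Lemma~\ref{lem-2}, which gives
$$\lVert Df(x)|_{x^\perp}^{-1}Df(x)|_{y^\perp}\rVert \leq 1 + d_P(x,y)\tan\th_x,$$
valid whenever $\rank Df(x)|_{x^\perp}=n$. Then I would substitute the identity $\tan\th_x = \de(f,x)$ supplied by Lemma~\ref{lem-5} (whose hypothesis is exactly $\rank Df(x)|_{x^\perp}=n$), obtaining $1 + \de(f,x)\,d_P(x,y)$ directly.

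For the second inequality, the same substitution applies to Lemma~\ref{lem-3}, which states
$$\lVert Df(x)|_{x^\perp}^{-1}Df(y)|_{y^\perp}\rVert \leq \frac{1}{(1-u)^2} + d_P(x,y)\tan\th_x$$
under the assumptions $\|x\|=\|y\|=1$ and $u<1$. The one subtlety is that Lemma~\ref{lem-6} only assumes $\|x\|=\|y\|$, not $\|x\|=\|y\|=1$, so I would first reduce to the normalized case. All four quantities in play — the operator norm $\lVert Df(x)|_{x^\perp}^{-1}Df(y)|_{y^\perp}\rVert$, the invariant $u = D^{3/2}\mu(f,x)d_R(x,y)/2$, $\de(f,x)$, and $d_P(x,y)$ — are unchanged when $f$ is rescaled and when $x,y$ are rescaled by a common factor (for the operator norm, this uses $Df_i(\la x) = \la^{d_i-1}Df_i(x)$ together with the fact that the two diagonal rescalings cancel, and that $x^\perp$ depends only on the line through $x$). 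Hence we may assume $\|f\|=\|x\|=\|y\|=1$, apply Lemma~\ref{lem-3}, and replace $\tan\th_x$ by $\de(f,x)$ via Lemma~\ref{lem-5}.

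There is no real obstacle here: the proof is a two-line citation of prior lemmas. The only point requiring a sentence of care is the scale-invariance reduction needed to pass from the hypothesis $\|x\|=\|y\|$ to the normalization $\|x\|=\|y\|=1$ under which Lemma~\ref{lem-3} was proved; I would state this explicitly rather than leave it implicit.
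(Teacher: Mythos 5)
Your proof is correct and matches the paper's argument exactly: the paper likewise obtains the first inequality from Lemmas~\ref{lem-2} and~\ref{lem-5} and the second from Lemmas~\ref{lem-3} and~\ref{lem-5}. Your extra remark on the scale-invariance reduction from $\|x\|=\|y\|$ to $\|x\|=\|y\|=1$ is a sound point of care that the paper leaves implicit.
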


\proof The first assertion comes from lemmas \ref{lem-2} and \ref{lem-5}. The second assertion is a consequence of
lemmas \ref{lem-3} and \ref{lem-5}.
\endproof

\begin{lemma} \label{lem-7} Assume that $\|x\|=\|y\|=1$.
When $\rank  Df(x) |_{x^\perp} = n$, and $u < (2 - \sqrt{2})/2$,
then $\rank Df(y)|_{x^\perp} = n$ and
$$\lVert Df(y)|_{x^\perp}^{-1}Df(x)|_{x^\perp} \rVert \leq \frac{(1-u)²}{\psi(u)}.$$
Moreover, if $u \le 1/19$,
\[
\lVert Df(y)|_{y^\perp}^{-1} Df(x)|_{x^\perp} \rVert
\leq
(1+d_P(x,y) \delta(f,y)) \frac{(1-u)^2}{\psi(u)}
\leq
1+3.805 u .
\]
\end{lemma}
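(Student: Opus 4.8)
The plan is to bootstrap from the two-sided bounds on $Df(x)|_{x^\perp}^{-1}Df(y)|_{y^\perp}$ already established, using a Neumann-series (perturbation) argument to invert. First I would apply Lemma 4.6 with the roles of $x$ and $y$ exchanged: writing $A = Df(x)|_{x^\perp}^{-1}Df(y)|_{y^\perp}$, Lemma 4.6 gives $\lVert A \rVert \le \frac{1}{(1-u)^2} + \delta(f,x)d_P(x,y)$. Now by Lemma 4.5, $\delta(f,x) = \tan\theta_x \le D^{1/2}\mu(f,x)$, and since $d_P(x,y) \le d_R(x,y) = 2u/(D^{3/2}\mu(f,x))$ we get $\delta(f,x) d_P(x,y) \le 2u/D \le u$. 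Hence $\lVert A \rVert \le \frac{1}{(1-u)^2} + u$. I claim $\lVert A - \id \rVert$ is correspondingly small: indeed $A - \id = Df(x)|_{x^\perp}^{-1}(Df(y)|_{y^\perp} - Df(x)|_{x^\perp})$ restricted appropriately, but it is cleaner to bound $\lVert \id - A^{-1}\rVert$ directly or to note $\lVert A - \id\rVert \le \lVert A\rVert - 1 + (\text{correction})$; the honest route is to re-run the estimates of Lemmas 4.2, 4.3, 4.5, 4.6 keeping track of the identity term, yielding $\lVert \id - A \rVert \le \frac{1}{(1-u)^2} - 1 + u =: \eta(u)$, which for $u < (2-\sqrt2)/2$ satisfies $\eta(u) < 1$.

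Once $\lVert \id - A \rVert = \eta < 1$, $A$ is invertible with $\lVert A^{-1}\rVert \le (1-\eta)^{-1}$, and $A^{-1} = Df(y)|_{y^\perp}^{-1}Df(x)|_{x^\perp}$ precisely when the intermediate factor $Df(y)|_{y^\perp}^{-1}Df(y)|_{x^\perp}$ is handled — but note $A^{-1}$ as written is $Df(y)|_{y^\perp}^{-1}Df(x)|_{x^\perp}$ only after composing with $Df(x)|_{x^\perp}^{-1}Df(x)|_{x^\perp} = \id$, so no extra factor is needed; simultaneously this proves $\rank Df(y)|_{x^\perp} = n$ (indeed $\rank Df(y)|_{y^\perp}=n$, and a separate small-angle argument transfers full rank from $y^\perp$ to $x^\perp$, again via Lemma 4.2-type bounds since $\theta_{x,y}$ is controlled by $u$). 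For the first displayed inequality I would show directly that $\lVert Df(y)|_{x^\perp}^{-1}Df(x)|_{x^\perp}\rVert \le \frac{(1-u)^2}{\psi(u)}$ by combining the Neumann bound with the defining relation $\psi(u) = 1 - 4u + 2u^2 = (1-u)^2 - (2u - u^2) = (1-u)^2(1 - \text{something})$; the algebraic identity $\frac{(1-u)^2}{\psi(u)} = \frac{1}{1 - (2u-u^2)/(1-u)^2}$ is exactly the Neumann sum of a geometric series with ratio $(2u-u^2)/(1-u)^2$, so matching the perturbation bound to this ratio is the crux.

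For the second chain of inequalities, I would write
\[
Df(y)|_{y^\perp}^{-1}Df(x)|_{x^\perp} = \left(Df(y)|_{y^\perp}^{-1}Df(y)|_{x^\perp}\right)\left(Df(y)|_{x^\perp}^{-1}Df(x)|_{x^\perp}\right),
\]
bound the first factor by $1 + d_P(x,y)\delta(f,y) = 1 + d_P(x,y)\tan\theta_y$ using Lemma 4.6 (with $f$ at base point $y$) — which is legitimate once we know $\rank Df(y)|_{y^\perp}=n$ from Theorem 3.1's conclusion, or more carefully from the first part of this very lemma applied symmetrically — and bound the second factor by $\frac{(1-u)^2}{\psi(u)}$ from the first displayed inequality. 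The final numerical step, $(1 + d_P(x,y)\delta(f,y))\frac{(1-u)^2}{\psi(u)} \le 1 + 3.805u$ for $u \le 1/19$, is a one-variable calculus estimate: bound $d_P(x,y)\delta(f,y) \le 2u/D \le u$ (using Lemma 4.5 at $y$ and $d_R(x,y) = 2u/(D^{3/2}\mu(f,y))$ — here one needs $\mu(f,y)$ versus $\mu(f,x)$, which at this order differ negligibly, or use Lemma 4.0 directly), then verify that $(1+u)\frac{(1-u)^2}{\psi(u)} - 1 \le 3.805u$ on $[0,1/19]$ by monotonicity and checking the endpoint. The main obstacle I anticipate is the bookkeeping in the first part: carefully extracting $\lVert \id - A\rVert$ (not just $\lVert A\rVert$) from the earlier lemmas, since those were stated as upper bounds on operator norms of the full composite rather than on its deviation from the identity, so some of the estimates of Lemmas 4.2–4.3 must be revisited to isolate the identity term with the correct sign.
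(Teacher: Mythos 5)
Your overall strategy for the first inequality (Taylor expansion plus a Neumann/perturbation argument, matching the ratio $(2u-u^2)/(1-u)^2$ to the identity $\frac{(1-u)^2}{\psi(u)}=\frac{1}{1-(2u-u^2)/(1-u)^2}$) is the right one and is exactly what the paper does; you correctly identified the crux. But your execution has a gap: you work with $A=Df(x)|_{x^\perp}^{-1}Df(y)|_{y^\perp}$, which maps $y^\perp$ to $x^\perp$, so ``$A-\id$'' is not meaningful and, more importantly, your candidate deviation $\eta(u)=\frac{1}{(1-u)^2}-1+u$ carries the spurious ``$+u$'' coming from the angle term $d_P(x,y)\tan\theta_x$ of Lemma~\ref{lem-6}. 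That term is present only because the domain is $y^\perp$; the operator that must be inverted for the first inequality is $Df(x)|_{x^\perp}^{-1}Df(y)|_{x^\perp}$, an endomorphism of $x^\perp$, whose deviation from $I_{x^\perp}$ is just the higher-order Taylor terms and is bounded by $\sum_{k\ge1}(k+1)u^k=\frac{2u-u^2}{(1-u)^2}$. With your $\eta(u)$ the Neumann bound neither stays below $1$ on all of $u<(2-\sqrt2)/2$ (check $u=0.29$) nor reproduces $\frac{(1-u)^2}{\psi(u)}$.

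Two further gaps in the second part. First, your bound $d_P(x,y)\,\delta(f,y)\le u$ is not available at this stage: Lemma~\ref{lem-5} at $y$ gives $\delta(f,y)\le D^{1/2}\mu(f,y)$, and converting $\mu(f,y)$ into $\mu(f,x)$ is precisely Lemma~\ref{lem-9}, which is proved \emph{from} the present lemma --- your ``differ negligibly'' remark hides a circularity (and Lemma~\ref{lem-0} controls $\delta(f,x)$, not $\delta(f,y)$). The paper escapes this by bounding $\delta(f,y)\le M\,\mu(f,x)\sqrt{D}$ where $M=\lVert Df(y)|_{y^\perp}^{-1}Df(x)|_{x^\perp}\rVert$ is the very quantity being estimated, obtaining the implicit inequality $M\le(1+Mu)\frac{(1-u)^2}{\psi(u)}$ and solving it to get $M\le\frac{(1-u)^2}{1-5u+4u^2-u^3}\le 1+3.805u$. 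Second, the invertibility of $Df(y)|_{y^\perp}$ does not follow from the first part ``applied symmetrically'' (that application would require knowing $Df(y)|_{y^\perp}$ is invertible), nor from $\rank Df(y)|_{x^\perp}=n$ alone, since the kernel line of $Df(y)$ could still lie in $y^\perp$. The paper needs a separate open-and-closed argument along the geodesic from $x$ to $y$, using the uniform bound $1+3.805u$ and continuity to show the set of parameters where invertibility holds is all of the interval. Without these three repairs the proposal does not close.
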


\proof We have
$$Df(y) = Df(x) + \sum_{k \geq 1} \frac{D^{k+1}f(x)}{k!}(y-x)^k$$
so that
$$Df(x)|_{x^\perp}^{-1}(Df(y) - Df(x)) =   \sum_{k \geq 1} Df(x)|_{x^\perp}^{-1}\frac{D^{k+1}f(x)}{k!}(y-x)^k,$$
and, like in the proof of Lemma \ref{lem-3},
$$\lVert Df(x)|_{x^\perp}^{-1}(Df(y) - Df(x)) \rVert \leq  \sum_{k \geq 1} (k+1)u^k = \frac{2u-u^2}{(1-u)^2} < 1$$
(because $u < (2 - \sqrt{2})/2$) so that, by Neumann's Perturbation Theorem,
$$I_{x^\perp} + Df(x)|_{x^\perp}^{-1}(Df(y) - Df(x))|_{x^\perp} = Df(x)|_{x^\perp}^{-1}Df(y)|_{x^\perp}$$
is invertible (i.e. $Df(y)|_{x^\perp}$ is invertible), and
$$\lVert Df(y)|_{x^\perp}^{-1}Df(x)|_{x^\perp} \rVert \leq \frac{1}{1 -  \frac{2u-u^2}{(1-u)^2}} = \frac{(1-u)²}{\psi(u)}.$$
\medskip
\par

We will prove the second statement in two steps.
First, prove it under the assumption that $Df(y)|_{y^\perp}$ is
invertible. Then, we remove this assumption.

The first step goes as follows.
Combining the first statement
with Lemma~\ref{lem-2} we obtain:

\begin{eqnarray*}
\| Df(y)|_{y^\perp}^{-1} Df(x)|_{x^{\perp}} \|
&\le&
\| Df(y)|_{y^\perp}^{-1} Df(y)|_{x^{\perp}} \|
\| Df(y)|_{x^\perp}^{-1} Df(x)|_{x^{\perp}} \|
\\
&\le&
(1+d_P(x,y) \tan \theta_y) \frac{(1-u)^2}{\psi(u)}
\\
&=&
(1+d_P(x,y) \delta(f,y)) \frac{(1-u)^2}{\psi(u)}
\end{eqnarray*}

A bound for $\delta(f,y)$ is
\begin{eqnarray*}
\delta(f,y) &=&
\left\| Df(y)_{y^\perp}^{-1} \diag(d_i) f(y) \right\| \\
&\le&
\| Df(y)|_{y^\perp}^{-1} Df(x)|_{x^{\perp}} \|
\| \left\| Df(x)_{x^\perp}^{-1} \diag(d_i) f(y) \right\| \\
&\le&
\| Df(y)|_{y^\perp}^{-1} Df(x)|_{x^{\perp}} \| \mu(f,x) \sqrt{D}
\end{eqnarray*}
using $|f(y)| \le 1$ since $\|f\|=1$ and $\|y\|=1$.

Combining both inequations and setting $M=
\| Df(y)|_{y^\perp}^{-1} Df(x)|_{x^{\perp}} \|$, we obtain:
\[
M \le (1 + M u) \frac{(1-u)^2}{\psi(u)}
\]
that simplifies to
\[
M \le \frac{ (1-u)^2 } {\psi(u) - u(1-u)^2}=
\frac{(1-u)^2}{1 - 5u + 4 u^2 - u^3 }
\le
1+3.805 u
.
\]
The last bound follows from the fact that the numerator
and the denominator have alternating signs, so the Taylor
expansion at zero of the fraction has terms of the same
sign (positive).
Hence,
\[
\frac{ \frac{(1-u)^2}{1 - 5u + 4 u^2 - u^3 }-1}{u}
\]

is an increasing function. In particular, for $u=1/19$, this
is smaller than $3.805$.
\medskip
\par

Now, we must prove that $Df(y)|_{y^\perp}$ is invertible.
Let $(x_t)_{t \in [0, d_R(x,y)]}$ denote
a minimizing geodesic (arc of great circle) between $x$ and $y$.

Let $W$ be the subset of all $t \in  [0, d_R(x,y)]$ so that
$Df(x_t)|_{x_t}$ is invertible. It is an open set, and
$0 \in W$.

We claim that $W$ is a closed set. Indeed, let $s \in \overline W$.
Then there is a sequence of $t_i \in W$ with $t_i \rightarrow s$.
We know from the second statement (restricted) that
\[
\|
Df(x_{t_i})|_{x_{t_i}^{\perp}} ^{-1}
Df(x)_{x^{\perp}}
\| \le 1 + 3.805 u.
\]

Hence, for $\tau = t_i$,
\[
h(\tau) = \|
Df(x_{\tau})|_{x_{\tau}^{\perp}} ^{-1}
Df(x)_{x^{\perp}}
\|_F^2 \le n (1 + 3.805 u)^2
\]

The function $h(\tau)$ is a rational function of a real parameter $\tau$,
so its domain is an open set and contains $s$. By continuity,
$h(s) \le n(1+3.805 u)^2$. Thus, $Df(x_s)_{x_s^{\perp}}$ is invertible,
and $s \in W$. As $W$ is a non-empty  open and closed subset of an
interval, $W = [0, d_R(x,y)]$ and $Df(y)|_{y^{\perp}}$ must be invertible.
\endproof

\begin{lemma}  \label{lem-8}  Suppose that $\rank  Df(x) |_{x^\perp} = n$, $u < 1$ and $\mu(f,y)$ is finite. Then
$$\mu(f,x) \leq \mu(f,y) \left(\frac{1}{(1-u)²} + \de(f,x) d_P(x,y)\right).$$
\end{lemma}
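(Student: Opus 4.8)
The plan is to reduce the estimate to a single operator-norm factorization and then quote Lemma~\ref{lem-6}.

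First I would normalize. All the quantities appearing in the statement --- $\mu(f,x)$, $\mu(f,y)$, $\delta(f,x)$, $d_P(x,y)$ and $u$ --- are invariant under rescaling $f$, $x$ and $y$ by nonzero complex constants; for $\delta$ this is transparent from Lemma~\ref{lem-5}, which identifies $\delta(f,x)$ with $\tan\theta_x$, an angle depending only on the complex lines through $x$ and through $\ker Df(x)$ (it can also be checked directly from the defining formula). So I may assume $\|f\|=\|x\|=\|y\|=1$, whence $\mu(f,x)=\|Df(x)|_{x^\perp}^{-1}\diag(\sqrt{d_i})\|$ and $\mu(f,y)=\|Df(y)|_{y^\perp}^{-1}\diag(\sqrt{d_i})\|$.

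Next, the hypothesis that $\mu(f,y)$ is finite means $\rank Df(y)|_{y^\perp}=n$, so $Df(y)|_{y^\perp}\colon y^\perp\to\C^n$ is invertible, and the composite $\C^n\xrightarrow{\ Df(y)|_{y^\perp}^{-1}\diag(\sqrt{d_i})\ }y^\perp\xrightarrow{\ Df(y)|_{y^\perp}\ }\C^n\xrightarrow{\ Df(x)|_{x^\perp}^{-1}\ }x^\perp$ makes sense and gives
$$Df(x)|_{x^\perp}^{-1}\diag(\sqrt{d_i})=\bigl(Df(x)|_{x^\perp}^{-1}Df(y)|_{y^\perp}\bigr)\bigl(Df(y)|_{y^\perp}^{-1}\diag(\sqrt{d_i})\bigr).$$
Taking operator norms and using submultiplicativity,
$$\mu(f,x)\le\bigl\|Df(x)|_{x^\perp}^{-1}Df(y)|_{y^\perp}\bigr\|\cdot\mu(f,y).$$

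Finally, because $\|x\|=\|y\|=1$ and $u<1$, the second assertion of Lemma~\ref{lem-6} bounds $\bigl\|Df(x)|_{x^\perp}^{-1}Df(y)|_{y^\perp}\bigr\|$ by $\tfrac1{(1-u)^2}+\delta(f,x)\,d_P(x,y)$; substituting into the previous display proves the lemma. I expect no real obstacle here --- the only points needing a little care are the scale-invariance remark used to pass to the normalized setting, checking that the operator compositions run between the matching subspaces $x^\perp$, $y^\perp$ and $\C^n$, and noting that finiteness of $\mu(f,y)$ is exactly the invertibility of $Df(y)|_{y^\perp}$ that Lemma~\ref{lem-6} requires. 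If one prefers to avoid normalization, the same argument works verbatim with $\diag(\sqrt{d_i})$ replaced by $\diag(\sqrt{d_i}\|x\|^{d_i-1})$ throughout and $\|x\|=\|y\|$ arranged by rescaling $y$, which changes none of the relevant quantities.
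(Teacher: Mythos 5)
Your proposal is correct and is essentially identical to the paper's own proof: the authors also normalize $\|f\|=\|x\|=\|y\|=1$, insert $Df(y)|_{y^\perp}\,Df(y)|_{y^\perp}^{-1}$ into the expression for $\mu(f,x)$, and conclude with the second assertion of Lemma~\ref{lem-6}. Your added remarks on scale invariance and on finiteness of $\mu(f,y)$ supplying the needed invertibility simply make explicit what the paper leaves implicit.
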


\proof Suppose that $\rVert x \lVert = \rVert y \lVert = \rVert f \lVert = 1.$
We can bound
$$\mu(f,x) = \lVert  Df(x) |_{x^\perp}^{-1}  Df(y) |_{y^\perp} Df(y) |_{y^\perp}^{-1} \diag(\sqrt{d_i})\rVert$$
and we conclude with Lemma \ref{lem-6}.
\endproof

\begin{lemma}  \label{lem-9}  When $\rank  Df(x) |_{x^\perp} = n$, and
$u<1/19$
we have
$$\mu(f,y) \leq
(1+3.805u) \mu(f,x).$$
\end{lemma}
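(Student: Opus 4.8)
The plan is to deduce this bound immediately from Lemma~\ref{lem-7}. First I would note that both $\mu(f,x)$ and the quantity $u = D^{3/2}\mu(f,x)d_R(x,y)/2$ are invariant under the rescalings $f\mapsto\la_1 f$, $x\mapsto\la_2 x$, $y\mapsto\la_3 y$ (the condition number is unitarily and scale invariant, and $d_R$ is a projective distance), so there is no loss of generality in assuming $\|f\|=\|x\|=\|y\|=1$. Under this normalization $\|y\|^{d_i-1}=1$, hence
\[
\mu(f,x)=\left\|Df(x)|_{x^\perp}^{-1}\diag(\sqrt{d_i})\right\|,
\qquad
\mu(f,y)=\left\|Df(y)|_{y^\perp}^{-1}\diag(\sqrt{d_i})\right\|,
\]
the latter provided $Df(y)|_{y^\perp}$ is invertible.

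Next, since $u<1/19<(2-\sqrt2)/2$, Lemma~\ref{lem-7} applies and gives two things: that $Df(y)|_{y^\perp}$ has rank $n$ (so $\mu(f,y)$ is finite and the second displayed formula is valid), and that
\[
\left\|Df(y)|_{y^\perp}^{-1}Df(x)|_{x^\perp}\right\|\le 1+3.805\,u .
\]

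Finally I would factor
\[
Df(y)|_{y^\perp}^{-1}\diag(\sqrt{d_i})
=\left(Df(y)|_{y^\perp}^{-1}Df(x)|_{x^\perp}\right)\left(Df(x)|_{x^\perp}^{-1}\diag(\sqrt{d_i})\right),
\]
take operator norms, and use submultiplicativity together with the bound above to conclude
\[
\mu(f,y)\le\left\|Df(y)|_{y^\perp}^{-1}Df(x)|_{x^\perp}\right\|\,\mu(f,x)\le(1+3.805\,u)\,\mu(f,x),
\]
which is the claim. There is essentially no obstacle here once Lemma~\ref{lem-7} is available; the only delicate point, namely the invertibility of $Df(y)|_{y^\perp}$, is exactly what the connectedness (open-and-closed) argument inside the proof of Lemma~\ref{lem-7} supplies, and without it the factorization above would be meaningless.
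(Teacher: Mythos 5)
Your proof is correct and follows essentially the same route as the paper: normalize $f$, $x$, $y$, factor $Df(y)|_{y^\perp}^{-1}\diag(\sqrt{d_i})$ through $Df(x)|_{x^\perp}$, and apply the bound $\lVert Df(y)|_{y^\perp}^{-1}Df(x)|_{x^\perp}\rVert\le 1+3.805\,u$ from Lemma~\ref{lem-7}. Your explicit remark that Lemma~\ref{lem-7} also supplies the invertibility of $Df(y)|_{y^\perp}$ is a point the paper leaves implicit, but the argument is the same.
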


\proof Suppose that $\rVert x \lVert = \rVert y \lVert = \rVert f \lVert = 1.$ We have
$$\mu(f,y) = \lVert  Df(y)|_{y^\perp}^{-1} Df(x)|_{x^\perp} Df(x)|_{x^\perp}^{-1} \diag(\sqrt{d_i})\rVert \leq (1+3.805u)
\mu(f,x)$$
by Lemma \ref{lem-7}.
\endproof

\begin{lemma}  \label{lem-10}  Assume that $\|f\|=\|g\|=1$. 
Suppose that $\rank  Df(x) |_{x^\perp} = n$, and $v<1$. Then $\rank  Dg(x) |_{x^\perp} = n$, and
$$\lVert Df(x) |_{x^\perp}^{-1}Dg(x) |_{x^\perp} \rVert \leq 1+v,$$
$$\lVert Dg(x) |_{x^\perp}^{-1}Df(x) |_{x^\perp} \rVert \leq \frac{1}{1-v},$$
$$(1-v)\mu(g,x) \leq \mu(f,x) \leq (1+v)\mu(g,x).$$
\end{lemma}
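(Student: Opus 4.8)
The plan is to mimic the structure of Lemmas \ref{lem-8} and \ref{lem-9}, but now keeping the point $x$ fixed and perturbing only the system, from $f$ to $g$. Since $x$ does not change, the linear space $x^\perp$ is the same on both sides, which makes this considerably easier than the analogous point-perturbation lemmas: there are no angles $\th_x$, $\th_y$ or projective distances $d_P(x,y)$ to control, and the whole argument reduces to a Neumann series estimate for $Df(x)|_{x^\perp}^{-1}(Dg(x)-Df(x))|_{x^\perp}$.

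First I would normalize so that $\|f\|=\|g\|=\|x\|=1$ (the quantities involved are invariant under the scalings of $f$, $g$, $x$), and write $h = g - f \in \Hd$, so that $\|h\| = \|f/\|f\| - g/\|g\|\|$ by the normalization, and hence $v = D^{1/2}\mu(f,x)\|h\|$. By Lemma \ref{high-der}(2) applied to $h$ we have $\|Dh(x)\| \le D\|h\|$ (using $\|h\|\le 1$ is not even needed here; one just uses $\|Dh_i(x)\|\le d_i\|h_i\|\|x\|^{d_i-1}$ and the fact that the operator norm of the diagonal-scaled derivative is $\le \|h\|$). More precisely, write
\[
Df(x)|_{x^\perp}^{-1} Dh(x)|_{x^\perp}
= Df(x)|_{x^\perp}^{-1}\diag(\sqrt{d_i})\,\diag(1/\sqrt{d_i})Dh(x)|_{x^\perp},
\]
so its operator norm is at most $\mu(f,x)\cdot\|\diag(1/\sqrt{d_i})Dh(x)|_{x^\perp}\|\le \mu(f,x)\, D^{1/2}\|h\| = v$, where the middle factor is bounded by $D^{1/2}\|h\|$ using $\|\diag(1/\sqrt{d_i})Dh_i(x)\|\le \sqrt{d_i}\|h_i\|\le \sqrt{D}\|h\|$ as in the proof of Lemma \ref{lem-1}/\ref{high-der}. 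Since $v < 1$, Neumann's perturbation theorem gives that $I_{x^\perp} + Df(x)|_{x^\perp}^{-1}(Dg(x)-Df(x))|_{x^\perp} = Df(x)|_{x^\perp}^{-1}Dg(x)|_{x^\perp}$ is invertible; in particular $\rank Dg(x)|_{x^\perp}=n$, and
\[
\|Df(x)|_{x^\perp}^{-1}Dg(x)|_{x^\perp}\| \le 1 + v,\qquad
\|Dg(x)|_{x^\perp}^{-1}Df(x)|_{x^\perp}\| \le \frac{1}{1-v}.
\]

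For the condition-number inequalities, I would insert the identity $Df(x)|_{x^\perp}Df(x)|_{x^\perp}^{-1}$ into the definition of $\mu(g,x)$: writing
\[
\mu(g,x) = \|g\|\,\|Dg(x)|_{x^\perp}^{-1}\diag(\sqrt{d_i}\|x\|^{d_i-1})\|
= \|Dg(x)|_{x^\perp}^{-1}Df(x)|_{x^\perp}\,Df(x)|_{x^\perp}^{-1}\diag(\sqrt{d_i})\|
\le \frac{1}{1-v}\mu(f,x),
\]
and symmetrically $\mu(f,x)\le(1+v)\mu(g,x)$ using the first operator-norm bound; together these give $(1-v)\mu(g,x)\le\mu(f,x)\le(1+v)\mu(g,x)$. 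I do not anticipate a genuine obstacle here — the only thing to be careful about is the bookkeeping of the norm on $\Hd$, namely that $\|\diag(1/\sqrt{d_i})Dh(x)|_{x^\perp}\|\le D^{1/2}\|h\|$, which is exactly the estimate already used (for $f$ in place of $h$) in Lemma \ref{lem-1} and recorded in Lemma \ref{high-der}(2), so it can be quoted rather than redone.
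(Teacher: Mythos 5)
Your proposal is correct and follows essentially the same route as the paper: the paper also bounds $\lVert Df(x)|_{x^\perp}^{-1}D(f-g)(x)|_{x^\perp}\rVert\le \mu(f,x)D^{1/2}\lVert f-g\rVert = v$ by inserting $\diag(d_i^{1/2})\diag(d_i^{-1/2})$ and invoking Lemma \ref{high-der}, applies Neumann's perturbation theorem to get invertibility and the two operator-norm bounds, and then obtains the $\mu$ inequalities by inserting the identity $Dg(x)|_{x^\perp}^{-1}Df(x)|_{x^\perp}\,Df(x)|_{x^\perp}^{-1}$ (and its counterpart) into the definition of the condition number. No discrepancies worth noting.
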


\proof  Suppose that $\rVert x \lVert = 1.$ One has
$Df(x) |_{x^\perp}^{-1}Dg(x) |_{x^\perp} = I_{x^\perp} - (I_{x^\perp} - \mbox{ idem})$ and
$$I_{x^\perp} - \mbox{ idem} = Df(x) |_{x^\perp}^{-1} \diag(d_i^{1/2})\diag(d_i^{-1/2})D(f-g)(x) |_{x^\perp}$$
which norm is bounded by (using Lemma \ref{high-der})
$$\mu(f,x)D^{1/2}\Vert f-g \Vert \le v < 1.$$
This proves the first inequality.
Thus $Df(x) |_{x^\perp}^{-1}Dg(x) |_{x^\perp}$ is invertible and the norm of its inverse is bounded by $1/(1-v)$ (Neumann's Perturbation Theorem). This gives
$$ \mu(g,x) \leq \lVert Dg(x) |_{x^\perp}^{-1}Df(x) |_{x^\perp} \rVert \lVert Df(x) |_{x^\perp}^{-1} \diag((d_i^{1/2}) \rVert \leq \frac{\mu(f,x)}{1-v}.$$
The last inequality is obtained via
$$\mu(f,x) \leq \lVert Df(x) |_{x^\perp}^{-1}Dg(x) |_{x^\perp} \rVert \lVert Dg(x) |_{x^\perp}^{-1} \diag((d_i^{1/2}) \rVert
\leq (1+v)\mu(g,x).$$
\endproof

\begin{lemma}  \label{lem-11} Let $u_g = D^{3/2}\mu(g,x)d_R(x,y)/2$. Suppose that $\rank  Df(x) |_{x^\perp} = n$, and $v<1$. Then
$$(1-v)u_g \leq u \leq (1+v)u_g,$$
and
$$(1-v)\de(g,x) - v \leq \de(f,x) \leq (1+v)\de(g,x) + v.$$
\end{lemma}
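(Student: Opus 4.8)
The plan is to reduce everything to Lemma~\ref{lem-10}. First I would normalize: the quantities $\mu(\cdot,\cdot)$, $d_R(x,y)$, $v$, and $\delta(\cdot,\cdot)$ are all constant on the equivalence classes $(f,x)\mapsto(\lambda_1 f,\lambda_2 x)$, so (as in the earlier lemmas) we may assume $\|f\|=\|g\|=\|x\|=1$, and, for the estimate on $u$, also $\|y\|=1$. Under the hypotheses $\rank Df(x)|_{x^\perp}=n$ and $v<1$, Lemma~\ref{lem-10} already supplies $\rank Dg(x)|_{x^\perp}=n$ (so that $\mu(g,x)$, $u_g$, $\delta(g,x)$ are well defined), the two-sided bound $(1-v)\mu(g,x)\le\mu(f,x)\le(1+v)\mu(g,x)$, and the operator bounds $\|Df(x)|_{x^\perp}^{-1}Dg(x)|_{x^\perp}\|\le 1+v$ and $\|Dg(x)|_{x^\perp}^{-1}Df(x)|_{x^\perp}\|\le 1/(1-v)$.

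For the first inequality I would just multiply the $\mu$-bound of Lemma~\ref{lem-10} by $D^{3/2}d_R(x,y)/2\ge 0$; by the definitions of $u$ and $u_g$ this is exactly $(1-v)u_g\le u\le(1+v)u_g$.

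For the $\delta$-estimate I would write $\delta(f,x)=\|Df(x)|_{x^\perp}^{-1}\diag(d_i)f(x)\|$ and split $f(x)=g(x)+(f(x)-g(x))$. Into the first summand I insert $Dg(x)|_{x^\perp}Dg(x)|_{x^\perp}^{-1}$, getting $Df(x)|_{x^\perp}^{-1}\diag(d_i)g(x)=\bigl(Df(x)|_{x^\perp}^{-1}Dg(x)|_{x^\perp}\bigr)\bigl(Dg(x)|_{x^\perp}^{-1}\diag(d_i)g(x)\bigr)$, of norm $\le(1+v)\delta(g,x)$ by Lemma~\ref{lem-10}. For the second summand I factor $\diag(d_i)=\diag(\sqrt{d_i})\diag(\sqrt{d_i})$ and use $\|Df(x)|_{x^\perp}^{-1}\diag(\sqrt{d_i})\|=\mu(f,x)$ together with $\|\diag(\sqrt{d_i})(f(x)-g(x))\|\le\sqrt D\,\|f-g\|$, the latter because each coordinate satisfies $|f_i(x)-g_i(x)|\le\|f_i-g_i\|$ (Lemma~\ref{high-der}, using $\|x\|=1$) and $\|f-g\|^2=\sum_i\|f_i-g_i\|^2$. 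Hence the second summand is $\le\sqrt D\,\mu(f,x)\|f-g\|=v$, and adding the two gives $\delta(f,x)\le(1+v)\delta(g,x)+v$.

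The only place needing care is the lower bound, since $v$ is defined asymmetrically through $\mu(f,x)$ rather than $\mu(g,x)$. I would run the same decomposition with $f$ and $g$ swapped: splitting $g(x)=f(x)+(g(x)-f(x))$ yields $\delta(g,x)\le\frac{1}{1-v}\delta(f,x)+\sqrt D\,\mu(g,x)\|f-g\|$, and then bounding $\mu(g,x)\le\mu(f,x)/(1-v)$ (Lemma~\ref{lem-10}) makes the last term $\le v/(1-v)$. This gives $(1-v)\delta(g,x)\le\delta(f,x)+v$, i.e. $\delta(f,x)\ge(1-v)\delta(g,x)-v$, completing the proof. No step is a genuine obstacle; the single point to watch is avoiding a circular reintroduction of $v$ with the wrong system, which is exactly what the estimate $\mu(g,x)\le\mu(f,x)/(1-v)$ takes care of.
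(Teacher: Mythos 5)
Your proposal is correct and follows essentially the same route as the paper: the $u$-inequality is obtained by multiplying the two-sided $\mu$-bound of Lemma~\ref{lem-10} by $D^{3/2}d_R(x,y)/2$, and the $\delta$-bounds come from the same two decompositions (inserting $Dg(x)|_{x^\perp}Dg(x)|_{x^\perp}^{-1}$, resp.\ $Df(x)|_{x^\perp}Df(x)|_{x^\perp}^{-1}$, and bounding the difference term by $v$), including the same use of $\mu(g,x)\le\mu(f,x)/(1-v)$ to handle the asymmetry of $v$ in the lower bound. No discrepancies worth noting.
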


\proof  Suppose that $\rVert x \lVert = 1.$ The first double inequality is a consequence of Lemma \ref{lem-10}.
For the second one, one has:
$$\de(g,x) = \lVert Dg(x)|_{x^\perp}^{-1} Df(x)|_{x^\perp} Df(x)|_{x^\perp}^{-1} \diag(d_i) f(x) +
Dg(x)|_{x^\perp}^{-1}  \diag(d_i) (g(x) - f(x)) \rVert.$$
By Lemma \ref{lem-10}$\lVert Dg(x)|_{x^\perp}^{-1} Df(x)|_{x^\perp} \rVert \leq 1/(1-v)$ so that
$$\de(g,x) \leq \frac{\de(f,x)}{1-v} + \mu(g,x) D^{1/2} \Vert f-g \Vert .$$
Again by Lemma \ref{lem-10} $\mu(g,x) \leq \mu(f,x)/(1-v)$ so that
$$\de(g,x) \leq \frac{\de(f,x)}{1-v} + \frac{\mu(f,x) D^{1/2} \Vert f-g \Vert}{1-v} \le \frac{\de(f,x) + v}{1-v}.$$
Similarly $\de(f,x) = $
$$\lVert Df(x)|_{x^\perp}^{-1} Dg(x)|_{x^\perp} Dg(x)|_{x^\perp}^{-1} \diag(d_i) g(x) +
Df(x)|_{x^\perp}^{-1}  \diag(d_i) (f(x) - g(x)) \rVert \leq$$
$$\lVert Df(x)|_{x^\perp}^{-1} Dg(x)|_{x^\perp} \rVert \de(g,x) + v.$$
Lemma \ref{lem-10} shows that $\lVert Df(x)|_{x^\perp}^{-1} Dg(x)|_{x^\perp} \rVert \leq 1+v$ so that
$$\de(f,x) \leq (1+v) \de(g,x) + v.$$
\endproof

\begin{lemma}  \label{lem-12}  When $\rank  Df(x) |_{x^\perp} = n$, $u, v \le 1/20$, then
$$\mu(g,y)
\frac{(1-v)}{1 + 3.805
\frac{u}{1-v} } \leq \mu(f,x) \leq
(1+v)\left( \frac{1}{\left(1-\frac{u}{1-v}\right)²} + \frac{u}{1-v}\right)\mu(g,y).$$
\end{lemma}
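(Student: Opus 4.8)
The plan is to combine three of the previous lemmas: Lemma~\ref{lem-10}, which compares $\mu(f,x)$ with $\mu(g,x)$ through the parameter $v$; Lemma~\ref{lem-9}, which compares $\mu(f,y)$ with $\mu(f,x)$ through $u$; and Lemma~\ref{lem-11}, which tells us how $u$ itself moves when we replace $f$ by $g$ at the point $x$. The key point is that Lemma~\ref{lem-9} (and the underlying Lemmas~\ref{lem-7}, \ref{lem-8}) are stated with hypotheses on $u$ relative to the \emph{system $f$}, whereas the target inequality mixes the system $g$ with the point $y$ while keeping $u$ (the quantity attached to $f$ and $x$) as the small parameter. So the bridge is to run Lemma~\ref{lem-9} from $y$ to $x$ with the system $g$, i.e.\ in the form $\mu(g,y)\le(1+3.805\,u_g/(1-v))^{\pm1}\mu(g,x)$ after translating $u_g$ into $u$.

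The steps, in order, would be: first, assume as usual $\|f\|=\|g\|=\|x\|=\|y\|=1$, using the scale invariance of all the quantities involved. Second, apply Lemma~\ref{lem-11} to get $u_g\le u/(1-v)$ (from $(1-v)u_g\le u$), so in particular $u_g\le 1/(20(1-1/20))=1/19$, which legitimizes invoking Lemma~\ref{lem-9} with the system $g$. Third, apply Lemma~\ref{lem-9} with $g$ in place of $f$ and with the roles of $x,y$ as needed, obtaining
\[
\frac{\mu(g,x)}{1+3.805\,u_g}\ \le\ \mu(g,y)\ \le\ (1+3.805\,u_g)\,\mu(g,x),
\]
and replace $u_g$ by the bound $u/(1-v)$ in the direction that weakens each inequality. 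Fourth, apply Lemma~\ref{lem-10} to exchange $\mu(g,x)$ for $\mu(f,x)$, i.e.\ $(1-v)\mu(g,x)\le\mu(f,x)\le(1+v)\mu(g,x)$, or equivalently $\mu(f,x)/(1+v)\le\mu(g,x)\le\mu(f,x)/(1-v)$. Fifth, chain the two-sided estimates: on the lower side, $\mu(g,y)\ge \mu(g,x)/(1+3.805\,u_g)\ge \mu(f,x)/((1+v)(1+3.805\,u/(1-v)))$, which rearranges to the stated lower bound $\mu(g,y)\,(1-v)/(1+3.805\,u/(1-v))\le\mu(f,x)$ after also using $1-v\le 1/(1+v)$ — wait, one should be slightly careful here and instead go through $\mu(f,x)\ge(1-v)\mu(g,x)$ directly, which is the form actually recorded in Lemma~\ref{lem-10}, giving $\mu(f,x)\ge(1-v)\mu(g,x)\ge(1-v)\mu(g,y)/(1+3.805\,u_g)\ge(1-v)\mu(g,y)/(1+3.805\,u/(1-v))$. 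On the upper side, use instead the version of Lemma~\ref{lem-9}/Lemma~\ref{lem-8} that produces the explicit rational bound: Lemma~\ref{lem-8} gives $\mu(g,x)\le\mu(g,y)\bigl(1/(1-u_g)^2+\delta(g,x)\,d_P(x,y)\bigr)$, and since $\delta(g,x)\,d_P(x,y)\le\delta(g,x)\,d_R(x,y)\le u_g$ (by Lemma~\ref{lem-5} applied to $g$, giving $\tan\theta_x=\delta(g,x)\le D^{1/2}\mu(g,x)$, hence $\delta(g,x)\,d_R(x,y)\le D^{3/2}\mu(g,x)d_R(x,y)/2=u_g$), we get $\mu(g,x)\le\mu(g,y)(1/(1-u_g)^2+u_g)$; then $\mu(f,x)\le(1+v)\mu(g,x)\le(1+v)\bigl(1/(1-u_g)^2+u_g\bigr)\mu(g,y)$, and substituting $u_g\le u/(1-v)$ (valid since the map $t\mapsto 1/(1-t)^2+t$ is increasing on $[0,1)$) yields exactly the claimed upper bound.

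The main obstacle I anticipate is bookkeeping rather than conceptual: one must apply each lemma \emph{with the correct pair} (sometimes $(g,y)$ vs.\ $(g,x)$, sometimes $(f,x)$ vs.\ $(g,x)$), keep track of which direction each estimate must point so that the substitution $u_g\le u/(1-v)$ actually weakens the bound (this needs monotonicity of $t\mapsto 1/(1-t)^2+t$ and of $t\mapsto 1+3.805t$), and verify that the composed hypotheses — in particular $u_g\le1/19$ and $v<1$ — really follow from $u,v\le1/20$. A secondary point of care is that Lemma~\ref{lem-9} requires $\rank Dg(y)|_{y^\perp}=n$ as an input for the reverse direction; this is supplied by first running Lemma~\ref{lem-10} (which upgrades $\rank Df(x)|_{x^\perp}=n$ to $\rank Dg(x)|_{x^\perp}=n$) and then Lemma~\ref{lem-7}/\ref{lem-9} (which upgrades invertibility at $x$ to invertibility at $y$) — so the order of invocation matters for the non-degeneracy conclusions, not just the numerical ones.
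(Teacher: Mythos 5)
Your proposal is correct and follows essentially the same route as the paper: bound $u_g\le u/(1-v)\le 1/19$ via Lemma~\ref{lem-11}, apply Lemma~\ref{lem-9} to $g$ together with Lemma~\ref{lem-10} for the lower bound, and chain Lemma~\ref{lem-10} with Lemma~\ref{lem-8} plus $\delta(g,x)\,d_P(x,y)\le u_g$ (Lemma~\ref{lem-0}) for the upper bound. The extra care you take with the monotonicity of $t\mapsto 1/(1-t)^2+t$ and with the order of the non-degeneracy upgrades is implicit in the paper but correctly spelled out here.
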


\proof
As before, let $u_g = D^{3/2} \mu(g,x) d_R(x,y)/2$.
Lemma~\ref{lem-11} allows us to bound $u_g \le u/(1-v) \le 1/19$.
So we may apply Lemma \ref{lem-9} to $g$ instead of $f$ so that
$$\mu(g,y) \leq (1+3.805 u_g) \mu(g,x)$$
Then, we bound $\mu(g,x)$ by $\mu(f,x)/(1-v)$ (Lemma \ref{lem-10}) and
obtain the first inequality. In particular, $\mu(g,y)$ is finite.

To prove the second one we apply Lemma \ref{lem-10}, and Lemma \ref{lem-8} to obtain
$$\mu(f,x) \leq (1+v) \mu(g,x) \leq (1+v)\left(\frac{1}{\left(1-u_g\right)²} + \de(g,x)d_P(x,y)\right)\mu(g,y) .$$
By Lemma \ref{lem-0} and Lemma \ref{lem-11} we have
$$ \de(g,x)d_P(x,y) \le u_g \le \frac{u}{1-v}$$
and we are done.
\endproof

\subsection{Proof of Theorem \ref{the-1}}

To prove the inequalities $(1 - 3.805 - v)\mu(g,y) \leq \mu(f,x) \leq (1+3.504u+v) \mu(g,y)$ we use Lemma \ref{lem-12} which gives
$$B(u,v)\mu(g,y) \leq \mu(f,x) \leq A(u,v) \mu(g,y)$$
with
$$A(u,v) = (1+v)\left( \frac{1}{\left(1-\frac{u}{1-v}\right)²} + \frac{u}{1-v}\right), \
B(u,v) = \frac{(1-v)}{1 + 3.805 \frac{u}{1-v}
} .$$

\[
A(u,v) = 1 + 3u + v + u
\frac
{6v^3+9uv^2-12v^2+4u^2v-12uv+6v-2u^2+3u}
{\left(1-v\right) \left(1-u-v\right)^2}
\]
and the last parenthesis is less than $0.504$ when $u,v < 1/20$.

The function $B(u,v) - (1-3.805 u - v)$ is increasing in $u$ and $v$,
and vanishes at the origin.
\endproof

\subsection{Proof of Corollary \ref{cor-1}}

Since $u,\ v \leq \ep / 5$ and $\ep \leq 1/4$ we get $u,v \leq 1/20$. Thus we can apply Theorem \ref{the-1} which gives
$$(1 - \ep)\mu(g,y) \leq (1 - 3.805u - v)\mu(g,y) \leq \mu(f,x) \leq (1+3.504u+v) \mu(g,y) \leq (1+\ep) \mu(g,y).$$
\endproof 
\remove{

\subsection{Estimates for $\beta_0$}

\begin{lemma}  \label{lem-13} Suppose that $\rank  Df(x) |_{x^\perp} = n$, and $v<1$. Then $\rank  Dg(x) |_{x^\perp} = n$, and
$$(1-v)\be_0(g,x) - D^{-1/2}v \leq \be_0(f,x) \leq (1+v)\be_0(g,x) + D^{-1/2}v,$$
and
$$(1-v)\de(g,x) - v \leq \de(f,x) \leq (1+v)\de(g,x) + v,$$
\end{lemma}

\proof Suppose that $\left\|x\right\| = \left\|f\right\| = \left\|g\right\| = 1$.
$$\be_0(g,x) = \lVert Dg(x)|_{x^\perp}^{-1} Df(x)|_{x^\perp} Df(x)|_{x^\perp}^{-1} \left( f(x)
+ (g(x) - f(x))\right)  \rVert \leq $$
$$\lVert Dg(x)|_{x^\perp}^{-1} Df(x)|_{x^\perp} \rVert \left( \lVert Df(x)|_{x^\perp}^{-1} f(x) \rVert
+ \lVert Df(x)|_{x^\perp}^{-1}(g(x) - f(x)) \rVert \right). $$
Since
$$\lVert Df(x)|_{x^\perp}^{-1}(g(x) - f(x)) \rVert \leq \mu(f,x) \lVert \diag(d_i^{-1/2})(g(x) - f(x)) \rVert \leq $$
$$\mu(f,x) \lVert g - f \rVert = D^{-1/2}v,$$
by Lemma \ref{lem-10} we obtain
$$\be_0(g,x)\leq \frac{1}{1-v}\left( \be_0(f,x) + D^{-1/2}v \right).$$

The other inequality is proved similarly:
$$\be_0(f,x) = \lVert Df(x)|_{x^\perp}^{-1} Dg(x)|_{x^\perp} Dg(x)|_{x^\perp}^{-1} g(x)
+ Df(x)|_{x^\perp}^{-1}(g(x) - f(x))  \rVert .$$
By Lemma \ref{lem-10} we obtain
$$\be_0(f,x) \leq (1+v)\be_0(g,x) + D^{-1/2}v.$$

The inequalities for $\de$ are proved by a similar argument.
\endproof

\begin{lemma}  \label{lem-14} Suppose that $\rank  Df(x) |_{x^\perp} = n \ge 2$, $\left\| x \right\| = \left\| y \right\|$ and $u<1$. Then
$$\left\| x \right\|^{-1}\lVert Df(x)|_{x^\perp}^{-1} \left( f(x) -f(y) \right)  \rVert \leq
\frac{\lVert x-y  \rVert}{\left\| x \right\|} \left( \frac{1}{1-u} + \de(f,x) + 1\right) \leq \frac{u/2}{1-u} + u. $$
\end{lemma}

\proof Suppose that $\left\|x\right\| = 1$. By Taylor's formula
\begin{eqnarray*}
Df(x)|_{x^\perp}^{-1} \left( f(y) -f(x) \right) &=&
Df(x)|_{x^\perp}^{-1}  Df(x) (y-x)
+ \sum_{k \geq 2}  Df(x)|_{x^\perp}^{-1} \frac{D^kf(x)}{k!}(y-x)^k
\\
&=& z + \sum_{k \geq 2} Df(x)|_{x^\perp}^{-1} \frac{D^kf(x)}{k!}(y-x)^k
\end{eqnarray*}
which norm is bounded by
$$\lVert z \rVert + \sum_{k \geq 2}\ga_0(f,x)^{k-1}\lVert x-y  \rVert^k \leq \lVert z \rVert + \lVert x-y  \rVert
\frac{\ga_0(f,x) \lVert x-y  \rVert}{1 - \ga_0(f,x) \lVert x-y  \rVert} \leq$$
$$\lVert z \rVert + \lVert x-y  \rVert \frac{u}{1-u}.$$
The vector $z$ is the projection of $y-x$ on $x^\perp$ along $\ker Df(x)$. Let $w$ denotes the orthogonal projection of $y-x$ on $x^\perp$ so that $w = y - \left\langle y,x \right\rangle x$. Since the angle at $y-x$ of the triangle with vertices at $z$, $y-x$, and $w$ is equal to $\th_x$, we have
\begin{figure}[htbp]
\centering
\includegraphics[angle = 90, width=0.8 \textwidth]{DMS-fig-3}
\caption{}
\end{figure}
$$\lVert z \rVert \leq \lVert z-w \rVert + \lVert w \rVert = \lVert y-x-w \rVert \tan \th_x + \lVert y - \left\langle y,x \right\rangle x \rVert =$$
$$\rvert \left\langle y-x,x \right\rangle \lvert \tan \th_x + \sqrt{1 - \rvert \left\langle y,x \right\rangle \lvert²}
\leq \rVert y-x \lVert ( \tan \th_x + 1).$$
According to Lemma \ref{lem-5}
$$\lVert z \rVert \leq  \rVert y-x \lVert (\de(f,x) + 1)$$
so that
$$\left\| x \right\|^{-1}\lVert Df(x)|_{x^\perp}^{-1} \left( f(x) -f(y) \right)  \rVert \leq \frac{\rVert y-x \lVert}{\rVert x \lVert}
\left( \frac{u}{1-u} + \de(f,x) + 1 \right).$$
To finish this proof, we use the inequalities $\| x-y \| / \| x \| \leq d_R(x,y) \leq u / 2$ and $d_R(x,y)\de(f,x) \leq u$ (Lemma \ref{lem-0}).
\endproof

\begin{lemma}  \label{lem-15} Suppose that $\rank  Df(x) |_{x^\perp} = n\ge 2$, $\left\|x\right\| = \left\|y\right\|$ and $u<1$. One has
$$\frac{1}{1+3.805u}
\be_0(f,y) - \frac{u/\sqrt{2}}{1-u} - u \leq
  \be_0(f,x) \leq
\left( \frac{1}{(1-u)^2} + u \right) \be_0(f,y) + \frac{u/2}{1-u} + u.$$
For the second inequality, we assume that $\be_0(f,y)$ is finite.
\end{lemma}

\proof Suppose that $\left\| x \right\| = \left\| y \right\| = 1. $
$$\be_0(f,x) = \lVert Df(x)|_{x^\perp}^{-1} Df(y)|_{y^\perp} Df(y)|_{y^\perp}^{-1}f(y) +
 Df(x)|_{x^\perp}^{-1}\left( f(x) -f(y) \right)  \rVert \leq $$
 $$\lVert Df(x)|_{x^\perp}^{-1} Df(y)|_{y^\perp}\rVert \lVert  Df(y)|_{y^\perp}^{-1}f(y)\rVert +
\lVert Df(x)|_{x^\perp}^{-1}\left( f(x) -f(y) \right)  \rVert $$
so that, from Lemma \ref{lem-0}, Lemma \ref{lem-6}, and Lemma \ref{lem-14} we get
$$\leq \left( \frac{1}{(1-u)^2} + \de(f,x)d_P(x,y)\right) \be_0(f,y) + \frac{u/2}{1-u} + u \le $$
$$\left( \frac{1}{(1-u)^2} + u \right) \be_0(f,y) + \frac{u/2}{1-u} + u .$$
Let us now prove the other inequality:
$$\be_0(f,y) \leq \lVert Df(y)|_{y^\perp}^{-1}  Df(x)|_{x^\perp}\rVert
\left( \right. \lVert Df(x)|_{x^\perp}^{-1} f(x) \rVert + \lVert Df(x)|_{x^\perp}^{-1}\left( f(y) -f(x) \right)  \rVert \left. \right). $$
From Lemma \ref{lem-7}  and Lemma \ref{lem-14} we get
$$\be_0(f,y) \leq (1+3.805u)
\left( \be_0(f,x) + \frac{u/2}{1-u} + u \right).$$
\endproof

\medskip

We can compose Lemma \ref{lem-13} and Lemma \ref{lem-15} to obtain a double inequality which relies $\be_0(f,x)$ and $\be_0(g,y)$.
}

\section{Alpha theory in projective spaces}\label{sec-alpha-th}

\begin{theorem}\label{the-3}
Let $0< \al \le \alpha_0 = (13-3\sqrt{17})/4 =  0.15767\ldots$ Let $f \in \Hd$ and $x \in \C^{n+1}$ both nonzero. If
\[
\frac{D^{3/2}}{2}\mu(f,x)\be_0(f,x) \le \al
,
\]
then there is a zero $\zeta \in \C^{n+1}$ of $f$ satisfying:
$
d_{T} (x, \zeta) \le \sigma(\al) \beta_0(f,x)
$
with
\[
\sigma(\al) =
\frac{1}{4} +
\frac{1 - \sqrt{(1 + \al)^2 - 8 \al}}{4 \al}
.
\]
Furthermore, if $y=N_f(x)$, then $d_{R} (y, \zeta) \le (\sigma(\al)-1) \beta(f,x)$.
Moreover, when
$$\frac{D^{3/2}}{2}\mu(f,x)\be_0(f,x) \leq \al \leq 0.049,$$
then $x$ is an approximate zero of $f$ corresponding to $\ze$,
and so does $y$.
\end{theorem}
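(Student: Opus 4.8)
The plan is to reduce the projective statement to the classical affine $\alpha$-theory of Smale (in the form presented in \cite{BCSS}), applied in a well-chosen affine chart, and then transport the resulting estimates back to projective/tangential distances. First I would set up the affine chart: since all quantities are invariant under scaling, normalize $\|x\|=1$ and choose unitary coordinates so that $x=e_0=(1,0,\ldots,0)$. In this chart the restriction $Df(x)|_{x^\perp}$ becomes (up to the diagonal scaling already absorbed in $\beta_0,\gamma_0$) the derivative of the dehomogenized system $F\in\CP_{(d)}$ at the origin, so that $\beta_0(f,x)=\beta(F,0)$ and, by item (16) of Definition \ref{definitions} together with Lemma \ref{high-der}(3), $\gamma_0(f,x)\le \frac{D^{3/2}}{2}\mu(f,x)$. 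Hence the hypothesis $\frac{D^{3/2}}{2}\mu(f,x)\beta_0(f,x)\le\al$ yields the affine $\alpha$-invariant bound $\alpha_0(f,x)=\beta_0(f,x)\gamma_0(f,x)\le\al$.

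Next I would invoke the quantitative affine Newton estimates. For $\alpha(F,0)\le\alpha_0=(13-3\sqrt{17})/4$, Smale's $\alpha$-theorem gives a genuine zero $\xi$ of $F$ with $\|\xi\|\le\sigma(\al)\beta(F,0)$, where $\sigma$ is exactly the function written in the statement (this $\sigma(\al)$ is the standard ``distance to the zero'' constant coming from the majorant series $\al t^2/(1-t)-2t+1$; its smaller root controls the Newton iteration). Translating back: the affine point $\xi$ corresponds to a projective zero $\zeta$ of $f$, and the affine distance $\|\xi\|=\|\xi-0\|$ in the chart $x=e_0$ is precisely $\tan$ of the Riemannian angle between $x$ and $\zeta$, i.e. $d_T(x,\zeta)$. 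This gives the first claimed inequality $d_T(x,\zeta)\le\sigma(\al)\beta_0(f,x)$. For $y=N_f(x)$: in the affine chart the projective Newton step $N_f(x)$ maps to the ordinary Newton iterate $N_F(0)=-DF(0)^{-1}F(0)$, whose affine distance to the origin is $\beta(F,0)=\beta_0(f,x)$; applying the $\alpha$-estimate at the new point and using $d_R\le d_T$ together with the contraction $d_T(y,\zeta)\le\sigma(\al)\beta(F,y)$ and the standard bound $\beta(F,y)\le(\text{something})\beta(F,0)$, one extracts $d_R(y,\zeta)\le(\sigma(\al)-1)\beta(f,x)$ — here I would be careful to check that the constant is indeed $\sigma(\al)-1$ and not merely $O(\alpha)\beta$, which amounts to a one-line manipulation of the majorant series identity $\sigma(\al)=1+\al\sigma(\al)^2/(\ldots)$ type relation.

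Finally, for the ``approximate zero'' conclusion when $\frac{D^{3/2}}{2}\mu(f,x)\beta_0(f,x)\le\al\le 0.049$: I would show that the hypothesis propagates along the Newton sequence. The key is that if $\al(F,0)\le 0.049$ then $\al$ stays in the basin where $\gamma_0$ does not grow too fast and $\beta_0$ contracts quadratically; quantitatively one checks $d_T(x,\zeta)\le\sigma(0.049)\beta_0(f,x)$ is small enough that the robust Newton/Smale estimate (e.g. the $\gamma$-theorem in tangential form, with the threshold $(2-\sqrt2)/2$ appearing in Lemma \ref{lem-7} and in $\psi$) applies at $\zeta$, giving $d_T(\zeta,x_p)\le(1/2)^{2^p-1}d_T(\zeta,x)$; since $y=N_f(x)=x_1$ this also shows $y$ is an approximate zero. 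The numerical constant $0.049$ should be exactly what is needed for the combined inequality $2\gamma_0(f,\zeta)d_T(\zeta,x)\le(2-\sqrt2)/2$ (or the analogous sharp bound) to hold after bounding $\gamma_0(f,\zeta)$ in terms of $\gamma_0(f,x)$ via the second Lipschitz estimate on $\gamma$.

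I expect the main obstacle to be the bookkeeping of constants: verifying that the affine-to-projective dictionary is isometric for $d_T$ (the $\tan$ identity) and, more delicately, pinning down that the Newton-iterate estimate gives precisely $\sigma(\al)-1$ (in $d_R$, using $d_R\le d_T$) rather than a weaker constant, and that $0.049$ is the correct threshold making the approximate-zero criterion from \cite{BCSS} applicable at $\zeta$. None of these steps is deep, but each requires matching the explicit majorant-series constants $\sigma(\al)$, $\psi(u)$, and $\alpha_0$ used elsewhere in the paper.
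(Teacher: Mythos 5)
Your proposal follows essentially the same route as the paper: dehomogenize via $F(X)=f(x+X)$ on $x^\perp$, apply the quantitative affine $\alpha$-theorem (Theorem 1, p.~462 of \cite{Bez1}, equivalently the \cite{BCSS} version) using $\gamma_0(f,x)\le \tfrac{D^{3/2}}{2}\mu(f,x)$, read off $d_T(x,\zeta)=\|Z\|\le\sigma(\al)\beta_0(f,x)$ from the tangent identity in the chart, and verify the approximate-zero criterion of \cite{BCSS}, Chap.~14, at $\zeta$ after controlling $\mu(f,\zeta)$ by the Lipschitz estimate of Lemma~\ref{lem-9}. The one place the paper is more direct is the bound for $y=N_f(x)$: instead of re-applying the $\alpha$-theorem at the iterate and bounding $\beta(F,Y)$ (where, as you suspect, the constant comes out marginally worse than $\sigma(\al)-1$), it uses the second inequality of the same cited theorem, $\|Z-X_1\|\le\frac{1-3\al-\sqrt{(1+\al)^2-8\al}}{4\gamma}=(\sigma(\al)-1)\beta$, which gives the stated constant exactly.
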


\begin{proof}
The proof below follows the lines of~\cite{Bez5}. We suppose that $f$ and $x$ are normalized ($\left\|f\right\| = \left\|x\right\| = 1$).
We consider the non-homogeneous polynomial system, defined for a variable $X \in x^{\perp}$ by
\[
F(X) = f( x + X)
.
\]
Let us denote by $N_F$ the usual Newton operator: $$N_F(X) = X - DF(X)^{-1}F(X).$$
Then $DF(X) = Df(x+X)|_{x^{\perp}}$. In particular,
$DF(0) = Df(x)|_{x^{\perp}}$ and we have,
\begin{eqnarray*}
y = N_f(x) &=& \lambda ( x + N_F(0) ), \lambda \in \mathbb C \setminus \{0\} \\
\beta_0(f,x) &=& \beta(F, 0) \\
\gamma_0(f,x) &\ge& \gamma(F, 0) \\
\alpha_0(f,x) &\ge& \alpha(F,0).
\end{eqnarray*}
Since, by Lemma \ref{high-der}, $\alpha_0(f,x) \leq \be_0(f,x)\mu(f,x){D^{3/2}}/{2}$, by Theorem 1, p. 462, in \cite{Bez1}, $0$ is an approximate zero of $F$ and hence (Definition 1 ibid.) the sequence $(X_k)_{k {\ge 0}}$
defined recursively by $X_{k+1} = N_F(X_k)$, $X_0=0$, converges quadratically to a zero $Z$ of $F$. Namely,
\[
\| X_{k+1} - X_k \| \le
 2^{-2^k+1} \|X_1 - X_0\|
.
\]
Moreover, by the same theorem,
\[
\|Z - X_0\| \le \frac{1+\alpha(F,0)-\sqrt{(1+\alpha(F,0))^2-8\alpha(F,0)}}{4\gamma(F,0)}
.
\]
Thus, for $\ze = (x + Z)/\left\|x+Z\right\|$, we can bound
\[
d_T(\zeta,x)
\le \| Z - X_0\| \le
\sigma(\al_0(f,x)) \beta(F,0) \le \sigma(\al) \beta_0(f,x)
.
\]
Again by Theorem 1, p.462, of \cite{Bez1},
\[
\|Z - X_1\| \le \frac{1-3\alpha(F,0)-\sqrt{(1+\alpha(F,0))^2-8\alpha(F,0)}}{4\gamma(F,0)}
\]
which implies that
\[
d_R(\zeta,y)
\le
(\sigma(\al)-1) \beta_0(f,x).
\]
\medskip

Let us prove that $x$ is an approximate zero.
This will follow directly from  \cite{BCSS}, Chap. 14, Theorem 1. In order
to apply this Theorem, we have to check its hypothesis
$$d_T(\ze,x)\ga_0(f,\ze) \leq \frac{3-\sqrt{7}}{2}. $$
Using Lemma \ref{high-der} this is obtained from
\begin{equation}\label{alpha:necessary1}
d_T(\ze,x)\frac{D^{3/2}}{2}\mu(f,\ze)\leq \frac{3-\sqrt{7}}{2}.
\end{equation}
We notice that
$u=
\frac{D^{3/2}}{2}d_R(x,\ze)\mu(f,x)
\le
u_T=\frac{D^{3/2}}{2}d_T(x,\ze)\mu(f,x)
\le
\alpha \sigma(\alpha) \le 0.049 \sigma(0.049) =0.0518\cdots < 1/19.$ According to Lemma~\ref{lem-9},
\[
\mu(f,\zeta)
\le 1.2 \mu(f,x) .
\]
Hence, we infer \eqref{alpha:necessary1} from:
\[
d_T(\ze,x)\frac{D^{3/2}}{2}\mu(f,\ze)\leq
1.2 u_T   < 0.1771\cdots = \frac{3-\sqrt{7}}{2}.
\]
A similar argument holds to prove that $y$ is an approximate root:
\[
d_T(\ze,y)\frac{D^{3/2}}{2}\mu(f,\ze)\leq
1.2 d_T(\ze,y)\frac{D^{3/2}}{2}\mu(f,x) \leq
1.2 \alpha(\sigma(\alpha) - \alpha)\leq
\frac{3-\sqrt{7}}{2}.
\]
\end{proof}
\medskip

In the following proposition we relate the invariant $\beta_0(f,x)$ for an approximate zero $x$ to the distance from its associated zero $\ze$.

\begin{proposition} \label{prop:beta-dist} Let $f \in \Hd$ be fixed
and $x \in \C^{n+1}$ be given.
If
\[
\frac{D^{3/2}}{2}\beta_0(f,x) \mu(f, x) \le \al \le 0.049,
\]
then,
$\beta_0(f,x) \le 1.128 d_{T}(x, \zeta)$,
where $\zeta$ is the zero of $f$ associated to $x$, given by
Theorem~\ref{the-3}.
\end{proposition}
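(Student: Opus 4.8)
The plan is to transport the estimate into the affine chart already used in the proof of Theorem~\ref{the-3}. If $f(x)=0$ then $\beta_0(f,x)=0$ and the inequality is trivial, so assume $\beta_0(f,x)>0$ and, after normalizing $\|f\|=\|x\|=1$, put $F(X)=f(x+X)$ for $X\in x^{\perp}$, with associated Newton operator $N_F$ and iterates $X_0=0$, $X_{k+1}=N_F(X_k)$. As recalled in the proof of Theorem~\ref{the-3}, $\beta_0(f,x)=\beta(F,0)=\|X_1-X_0\|$; moreover the hypothesis together with Lemma~\ref{high-der} gives $\alpha(F,0)\le\alpha_0(f,x)\le\frac{D^{3/2}}{2}\mu(f,x)\beta_0(f,x)\le\alpha\le 0.049<\alpha_0$, so the Newton sequence converges to a zero $Z$ of $F$ and $\zeta=(x+Z)/\|x+Z\|$ is the zero of $f$ associated to $x$. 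Since $Z\perp x$ and $\|x\|=1$ one has $|\langle x,x+Z\rangle|=1$ and $\|x+Z\|^{2}=1+\|Z\|^{2}$, whence $\cos d_R(x,\zeta)=(1+\|Z\|^{2})^{-1/2}$ and therefore $d_T(x,\zeta)=\tan d_R(x,\zeta)=\|Z\|=\|Z-X_0\|$. Thus the proposition is equivalent to the lower bound $\|Z-X_0\|\ge\beta(F,0)/1.128$.

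To prove this I would write $Z-X_0=(X_1-X_0)+(Z-X_1)$ and use the triangle inequality, $\|Z-X_0\|\ge\|X_1-X_0\|-\|Z-X_1\|=\beta(F,0)-\|Z-X_1\|$. The bound on $\|Z-X_1\|$ is precisely the one already invoked in the proof of Theorem~\ref{the-3} (Theorem~1, p.~462 of \cite{Bez1}): $\|Z-X_1\|\le\bigl(1-3\alpha(F,0)-\sqrt{(1+\alpha(F,0))^{2}-8\alpha(F,0)}\bigr)/(4\gamma(F,0))$. Using $\gamma(F,0)=\alpha(F,0)/\beta(F,0)$ and the identity $\sigma(t)-1=\bigl(1-3t-\sqrt{(1+t)^{2}-8t}\bigr)/(4t)$ read off from the definition of $\sigma$, this becomes $\|Z-X_1\|\le(\sigma(\alpha(F,0))-1)\beta(F,0)$. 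Since $t\mapsto\sigma(t)$ is increasing on $[0,\alpha_0]$ (a routine check) and $\alpha(F,0)\le 0.049$, one gets $\|Z-X_1\|\le(\sigma(0.049)-1)\beta(F,0)$, hence $\|Z-X_0\|\ge(2-\sigma(0.049))\beta(F,0)$, and finally $\beta_0(f,x)=\beta(F,0)\le d_T(x,\zeta)/(2-\sigma(0.049))$. A numerical evaluation gives $\sigma(0.049)=1.0578\ldots$, so $1/(2-\sigma(0.049))=1.0614\ldots<1.128$, which proves the claim.

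The points needing care are: the identity $d_T(x,\zeta)=\|Z\|$, where it is the specific affine normalization $\zeta=(x+Z)/\|x+Z\|$ that turns the inequality $d_T(\zeta,x)\le\|Z-X_0\|$ used in Theorem~\ref{the-3} into an equality; and the passage from the $\alpha$-theoretic estimate for $\|Z-X_1\|$ to the function $\sigma$ through $\alpha=\beta\gamma$, together with the monotonicity of $\sigma$ that permits the substitution $\alpha(F,0)\le 0.049$. Note that the cruder bound $\|X_{k+1}-X_k\|\le 2^{-2^{k}+1}\|X_1-X_0\|$ by itself is \emph{not} enough here (it would yield a constant near $2.7$), so the sharper $\sigma$-estimate is essential; apart from that the argument is the single triangle inequality above, and the gap between the constant $1.0614\ldots$ actually obtained and the stated $1.128$ leaves a comfortable margin.
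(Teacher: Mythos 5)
Your proof is correct, and it takes a genuinely different route from the paper's. The paper bounds $\beta_0(f,x)$ from above by Taylor-expanding $f(x)$ around the zero $\zeta$: it writes $\beta_0(f,x)\le\|Df(x)|_{x^\perp}^{-1}Df(\zeta)|_{\zeta^\perp}\|\cdot\|Df(\zeta)|_{\zeta^\perp}^{-1}f(x)\|$, controls the first factor with Lemmas~\ref{lem-3} and~\ref{lem-5}, controls the second by the series $\|x-\zeta\|+\sum_{k\ge2}\ldots\le d_T(x,\zeta)/(1-u_{T\zeta})$ (using the comparison $\mu(f,\zeta)\le(1+3.805u)\mu(f,x)$ from Theorem~\ref{the-1}), and then evaluates the resulting rational expression numerically at $u\le\alpha\sigma(\alpha)\le 0.0518$ to get $1.128$. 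You instead stay entirely in the affine chart of Theorem~\ref{the-3}: the exact identity $d_T(x,\zeta)=\|Z\|$ (which does hold, since $Z\in x^{\perp}$ and $\|x\|=1$ give $\cos d_R(x,\zeta)=(1+\|Z\|^2)^{-1/2}$), the triangle inequality $\|Z\|\ge\beta(F,0)-\|Z-X_1\|$, and the $\alpha$-theory estimate $\|Z-X_1\|\le(\sigma(\alpha(F,0))-1)\beta(F,0)$ that the paper itself already quotes from Theorem~1 of \cite{Bez1} in the proof of Theorem~\ref{the-3}. This avoids the derivative-comparison lemmas and the Taylor series at $\zeta$ altogether, and it yields the sharper constant $1/(2-\sigma(0.049))=1.0614\ldots<1.128$. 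The only points you rightly flag as needing care — the equality $d_T(x,\zeta)=\|Z\|$, the identity $\sigma(t)-1=(1-3t-\sqrt{(1+t)^2-8t})/(4t)$ via $\gamma=\alpha/\beta$, and the monotonicity of $\sigma$ — all check out (and the degenerate cases $\beta(F,0)=0$ or $\gamma(F,0)=0$ are trivial, as you note). In short: same ingredients as the paper's Theorem~\ref{the-3}, assembled more economically, with a better constant.
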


\begin{proof} We suppose that both $f$, $x$, and $\ze$ are normalized.
From Theorem~\ref{the-3}, $ d_T(x, \zeta) \le \beta_0(f,x) \sigma(\al)$.
Hence,
\[
u \le u_T = \frac{D^{3/2}}{2} \mu(f,x) d_T(x, \zeta) \le \al \sigma(\al) \le 0.0518.
\]
From Theorem~\ref{the-1}, we conclude for later use that
\[
u_{T,\zeta} =
\frac{D^{3/2}}{2} \mu(f,\zeta) d_T(x, \zeta) \le
u (1+3.805u)
\]
Now we can bound:
\begin{eqnarray*}
\beta_0(f,x) &=&
\| Df(x)|_{x^\perp}^{-1} f(x) \|
\\
&\le&
\| Df(x)|_{x^\perp}^{-1} Df(\zeta)_{\zeta^\perp} \|
\| Df(\zeta)|_{\zeta^\perp}^{-1} f(x) \|
\\
&\le&
\left( \frac{1}{(1-u)^2} + u \right)
\| Df(\zeta)|_{\zeta^\perp}^{-1} f(x) \|
\end{eqnarray*}
using Lemmas~\ref{lem-3} and~\ref{lem-5}.
We further bound, as usual,
\[
\| Df(\zeta)|_{\zeta^\perp}^{-1} f(x) \|
\le
\|x-\zeta\| + \sum_{k \ge 2}
\frac{1}{k!}\left\|
Df(\zeta)|_{\zeta^\perp}^{-1}
D^kf(\zeta) (x-\zeta)^k
\right\|
\le
d_T(x,\zeta) \frac{1}{1-u_{T\zeta}}
.
\]
Putting all together,
\[
\beta_0(f,x) \le \frac{1 - u + u^2}{(1-u)^2} \frac{1}{1 - u(1+3.805u)}
d_T(x,\zeta)
\le
1.128
d_T(x,\zeta);
\]
the last step is obtained numerically, using $u \le \alpha \sigma(\alpha) \le 0.0518$.
\end{proof}

\begin{proposition} \label{prop-beta-of-iterate} Assume that
\[
\frac{D^{3/2}}{2} \beta_0(f,x) \mu(f,x) \le a \le 1/20
\]
Let $y=N_f(x)$. Then,
\[
\beta_0(f,y) \le
\frac{a (1-a)}{\psi(a)}
\left(1 + \frac{a}{1 - 3.805 a}\right)
\beta_0(f,x)
<
1.23 a \beta_0(f,x).
\]
\end{proposition}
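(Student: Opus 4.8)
The plan is to transport the statement to the affine chart at $x$ used in the proof of Theorem~\ref{the-3}, carry out one step of the classical $\beta$-estimate of alpha-theory there, and then pay for the change of normal space. Since $\beta_0$, $\gamma_0$, $\mu$, $\delta$ and $d_R$ are all projective, I would first normalize $\|f\|=\|x\|=1$ and set $F(X)=f(x+X)$ for $X\in x^\perp$; then $DF(0)=Df(x)|_{x^\perp}$, $\beta(F,0)=\beta_0(f,x)$, $\gamma(F,0)\le\gamma_0(f,x)$, and $y=N_f(x)=x+N_F(0)$ with $N_F(0)\in x^\perp$ and $\|N_F(0)\|=\beta_0(f,x)$, so that $\|y\|=\sqrt{1+\beta_0(f,x)^2}\ge 1$ and $d_R(x,y)=\arctan\beta_0(f,x)$.

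First I would record the chart-level bound. Set $u'=\gamma(F,0)\beta(F,0)$; by Lemma~\ref{high-der}(3) and the hypothesis, $u'\le\gamma_0(f,x)\beta_0(f,x)\le\tfrac{D^{3/2}}{2}\mu(f,x)\beta_0(f,x)\le a\le\tfrac1{20}$, so in particular $\psi(u')>0$. The order $0$ and order $1$ Taylor terms of $F$ at $0$ cancel in $F(N_F(0))$ by the definition of $N_F$; bounding the remaining Taylor tail and handling $\|DF(N_F(0))^{-1}DF(0)\|$ by Neumann's theorem — the mechanism already used in the proofs of Lemmas~\ref{lem-3} and~\ref{lem-7} — then gives the standard one-step estimate
\[
\beta(F,N_F(0))\ \le\ \frac{(1-u')^2}{\psi(u')}\cdot\frac{u'}{1-u'}\,\beta(F,0)\ =\ \frac{u'(1-u')}{\psi(u')}\,\beta_0(f,x).
\]

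Next I would pass from $F$ back to $f$. Since $Df(y)|_{x^\perp}^{-1}f(y)=DF(N_F(0))^{-1}F(N_F(0))$, its norm is $\beta(F,N_F(0))$. With $u:=\tfrac{D^{3/2}}{2}\mu(f,x)\,d_R(x,y)\le\tfrac{D^{3/2}}{2}\mu(f,x)\,\beta_0(f,x)\le a<\tfrac1{19}$, Lemma~\ref{lem-7} (applied to the normalization of $y$, the conclusion being projective) shows $Df(y)|_{y^\perp}$ is invertible, while Lemma~\ref{lem-2} gives $\|Df(y)|_{y^\perp}^{-1}Df(y)|_{x^\perp}\|\le 1+d_P(x,y)\,\delta(f,y)$; hence
\[
\beta_0(f,y)\ =\ \|y\|^{-1}\big\|Df(y)|_{y^\perp}^{-1}f(y)\big\|\ \le\ \big(1+d_P(x,y)\,\delta(f,y)\big)\,\beta(F,N_F(0)).
\]
For the correction term I would use $d_P(x,y)\le d_R(x,y)\le\beta_0(f,x)$, the bound $\delta(f,y)\le D^{1/2}\mu(f,y)\le D^{1/2}(1+3.805u)\mu(f,x)$ coming from Lemmas~\ref{lem-5} and~\ref{lem-9}, and $D^{1/2}\mu(f,x)\beta_0(f,x)\le\tfrac2D\,a\le a$ (here $D\ge 2$), which yields $d_P(x,y)\,\delta(f,y)\le a(1+3.805u)\le\tfrac{a}{1-3.805a}$.

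Putting the two displays together, using $\|y\|^{-1}\le 1$ and replacing $u'$ by $a$ via the monotonicity of $t\mapsto\tfrac{t(1-t)}{\psi(t)}$ on $[0,\tfrac1{20}]$, gives the asserted $\beta_0(f,y)\le\frac{a(1-a)}{\psi(a)}\big(1+\frac{a}{1-3.805a}\big)\beta_0(f,x)$, and the inequality $<1.23\,a\,\beta_0(f,x)$ then follows by a direct evaluation on $[0,\tfrac1{20}]$ — with room to spare, once one notes that $f(y)$, and hence $\delta(f,y)$, is quadratically small in $\beta_0(f,x)$, so the correction factor is essentially $1$ and the dominant term is $\tfrac{a(1-a)}{\psi(a)}$. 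The point I expect to need the most care is precisely this passage from the chart back to $f$: one has to keep track of which quantities are genuinely projective in $y$ (so that the normalized-$y$ hypotheses of Lemmas~\ref{lem-7} and~\ref{lem-9} may legitimately be invoked), together with the routine checks of the smallness conditions $u,u'<\tfrac1{19}<1-\tfrac1{\sqrt2}$ and of the monotonicity of $\tfrac{(1-t)^2}{\psi(t)}$ and $\tfrac{t(1-t)}{\psi(t)}$ on $[0,\tfrac1{20}]$.
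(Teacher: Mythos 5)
Your argument is essentially the paper's: same affine chart $F(X)=f(x+X)$, same one-Newton-step bound $\beta(F,Y)\le\frac{a(1-a)}{\psi(a)}\beta(F,0)$ (which you re-derive from the Taylor tail plus Neumann's theorem, where the paper simply cites Proposition 3, p.~478 of \cite{Bez1}), and the same transfer factor $\|Df(y)|_{y^\perp}^{-1}Df(y)|_{x^\perp}\|\le 1+d_P(x,y)\delta(f,y)\le 1+\frac{a}{1-3.805a}$ via Lemmas~\ref{lem-6}, \ref{lem-5} and the bound on $\mu(f,y)$. One caveat: the final numeric step is not ``with room to spare'' --- at $a=1/20$ the displayed product is about $1.253\,a$, so the stated $<1.23\,a$ requires either restricting $a$ (as in the paper's actual applications, where $a\le 0.038$) or invoking your own observation that $\delta(f,y)$ is quadratically small, a tightness issue your write-up shares with the paper's proof.
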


\begin{proof}
We assume $\|x\|=1$.
Let $F: X \in x^{\perp} \mapsto f(x+X)$ be the affine polynomial
system associated with $f$.
Then, $\beta(F,0)=\beta_0(f,x)$. Moreover, we can scale
$y=x+Y$, for $Y = N_F(0)$. By Proposition 3, p.478 in
\cite{Bez1},
\begin{equation}\label{eq-beta-1}
\beta(F,Y) \le \frac{a (1-a)}{\psi(a)} \beta_0(f,x) .
\end{equation}
Moreover,
\begin{equation}\label{eq-beta-2}
\beta_0(f,y) \le
\| Df(y)|_{y^\perp} ^{-1}  Df(y)|_{x^\perp} \|
\|y\|^{-1}
\beta(F,Y)
.
\end{equation}
Clearly, $\|y\| \ge 1$. It remains to bound the norm of
the first term in the rhs of \eqref{eq-beta-2}.
By hypothesis,
\[
u= \frac{D^{3/2}}{2} \mu(f,x) d_R(f,x) \le a \le 1/20
\]
so Theorem~\ref{the-1} implies that
\[
\mu(f,y) \le \frac{1}{1-3.805a} \mu(f,x) .
\]
In particular, $Df(y)|_{y\perp}$ has full rank, and we can
apply Lemma~\ref{lem-6} and then Lemma~\ref{lem-5} to bound
\[
\| Df(y)|_{y^\perp} ^{-1}  Df(y)|_{x^\perp} \|
\le
1 + \delta(f,y) d_P(x,y)
\le
1 + D^{1/2} \mu(f,y) \beta_0(f,x)
\le 1 + \frac{a}{1 - 3.805 a}
\]
Combining with \eqref{eq-beta-1} and \eqref{eq-beta-2}, we
obtain:
\[
\beta_0(f,y) \le
\frac{a (1-a)}{\psi(a)}
\left(1 + \frac{a}{1 - 3.805 a}\right)
\beta_0(f,x) .
\le
1.23 a \beta_0(f,x)
\]
\end{proof}

%
%
%
%
%

\section{The homotopy}\label{sec-homotopy}

The objective of this section is to prove Theorem~\ref{main-homotopy}.
Through this section the considered systems and zeros are normalized: $f(\ze)=0$ with $\left\|f\right\| = \left\|\ze\right\|=1$. 

Let $t \in [a,b]$, and $x_t \in \C^{n+1}$ be given with $\left\| x_t \right\|=1$. We suppose that
\begin{equation}\label{E}
\rank Df_t(x_t)|_{x_t^\perp} = n,
\end{equation}
and that
\begin{equation}\label{E0}
\frac{D^{3/2}}{2}\be_0(f_t,x_t)\mu(f_t,x_t) \leq \al .
\end{equation}
According to Theorem \ref{the-3}, for $\al$ small enough,
$x_t$ is an approximate zero of $f_t$. We call $\zeta_t$ the associated zero
and extend it continuously for $s \in [t,t']$ so that $f_s(\zeta_s) = 0$.

The main difficulty to prove Theorem~\ref{main-homotopy}
is to transfer the properties \eqref{E} and \eqref{E0}
supposed to be true at $t=t_i$ onto a similar property at $t'=t_{i+1}$.
Moreover, we must show that if $x_t$ is an approximate zero associated to
$\zeta_t$, then the same is true for $t'$, for a continuous path $\zeta_s$.
For this purpose we study this transfer in a general context.

Through this section, $\epsilon \le 1/6$.
Let $t' > t$ be given and assume that
\begin{equation}\label{E1}
l(t') - l(t) \le \frac{\epsilon}{5 D^{1/2} \mu(f_t,x_t)}
\end{equation}
and
\begin{equation}\label{E2}
\max_{s \in [t, t']} \phi_{t,s} (x) \le \frac{\epsilon}{5 D^{3/2} \mu(f_t,x_t)}.
\end{equation}

For any $s \in ]t,t']$ let us define $x_s = N_{f_s}(x_t)$. Notice that $x_s$ is not necessarily normalized.

\begin{lemma}\label{homotopy-lemma}
Let $\epsilon \le 1/6$ and set $\alpha = \epsilon^2/2$.
Under the hypotheses above, for any $s \in ]t, t']$, one has
\begin{enumerate}
\item
$\mu(f_s, x_t) \le \frac{1}{1-\epsilon} \mu(f_t,x_t)$.
\item $\frac{1}{1+\epsilon/5}\left( \phi_{t, s} (x_t)  - \frac{2 \alpha }
{D^{3/2} \mu(f_t,x_t)}\right)
\le \beta_0(f_s, x_t) \le \frac{\epsilon/5+2 \al}{ (1-\epsilon/5)D^{3/2}\mu(f_t,x_t)}$,
\item $\frac{D^{3/2}}{2}\beta_0(f_s, x_t)\mu(f_s, x_t) \le 0.049$.
In particular,
$x_t$ and $x_s$ are approximate zeros of $f_s$ associated with $\ze_s$,
\item
$\mu(f_s, x_s) \le \frac{1}{1-\epsilon} \mu(f_t,x_t)$.
\item $
(1-\epsilon) \mu(f_s, \zeta_s)
\le
\mu(f_t, x_t)
\le
(1+\epsilon) \mu(f_s, \zeta_s)
$.
In particular,
$\zeta_s$ is non-degenerate zero of $f_s$, and hence $s \mapsto \zeta_s$
in continuous for $s \in [t,t']$.
\item $\beta_0(f_s,x_s) \le 1.23 \alpha(f_s, x_t) \beta_0(f_s,x_t).$

\item Hypothesis \eqref{E} and a strong version of \eqref{E0} hold at $s$: $\rank Df_s(x_s)|_{x_s^\perp} = n$, and
\[\frac{D^{3/2}}{2}\be_0(f_s,x_s)\mu(f_s,x_s) \leq 0.128\al
\]
\end{enumerate}
\end{lemma}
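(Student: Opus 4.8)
I would prove the seven assertions in the order listed, each feeding the next, after reading off three ``master bounds'' from the hypotheses. Writing $v:=D^{1/2}\mu(f_t,x_t)\,\|f_t-f_s\|$ for suitably chosen normalized representatives (so that $\|f_t-f_s\|\le l(s)-l(t)$), hypothesis~\eqref{E1} gives $v\le\epsilon/5$; hypothesis~\eqref{E2} gives $\phi_{t,s}(x_t)\le\epsilon/(5D^{3/2}\mu(f_t,x_t))$; and \eqref{E0} gives $\beta_0(f_t,x_t)\le2\alpha/(D^{3/2}\mu(f_t,x_t))$. Everything below is bookkeeping with these three inequalities and the perturbation lemmas of Sections~\ref{sect-3}--\ref{sec-alpha-th}.

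\textbf{Assertions (1)--(3).} Assertion~(1) is Lemma~\ref{lem-10} applied to $(f_t,f_s,x_t)$: $v<1$ gives $\mu(f_s,x_t)\le(1-v)^{-1}\mu(f_t,x_t)\le(1-\epsilon/5)^{-1}\mu(f_t,x_t)\le(1-\epsilon)^{-1}\mu(f_t,x_t)$, and I keep the sharper factor $(1-\epsilon/5)^{-1}$ for later. For~(2) I factor
\[
Df_s(x_t)|_{x_t^\perp}^{-1}f_s(x_t)=\bigl(Df_s(x_t)|_{x_t^\perp}^{-1}Df_t(x_t)|_{x_t^\perp}\bigr)\Bigl(Df_t(x_t)|_{x_t^\perp}^{-1}f_t(x_t)-Df_t(x_t)|_{x_t^\perp}^{-1}\bigl(f_t(x_t)-f_s(x_t)\bigr)\Bigr),
\]
and its analogue with the roles of $f_t$ and $f_s$ exchanged; Lemma~\ref{lem-10} bounds the mixed factor by $(1-v)^{-1}$ (resp.\ by $1+v$), giving $\beta_0(f_s,x_t)\le(1-v)^{-1}\bigl(\beta_0(f_t,x_t)+\phi_{t,s}(x_t)\bigr)$ and $(1+v)\beta_0(f_s,x_t)\ge\phi_{t,s}(x_t)-\beta_0(f_t,x_t)$, from which~(2) follows after inserting the master bounds. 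Multiplying~(1) by the upper bound in~(2) yields
\[
\tfrac{D^{3/2}}{2}\beta_0(f_s,x_t)\mu(f_s,x_t)\le\frac{\epsilon/5+\epsilon^2}{2(1-\epsilon/5)^2}=:a
\]
(using $2\alpha=\epsilon^2$), and for $0<\epsilon\le1/6$ one checks $a\le0.0327<0.049$; Theorem~\ref{the-3} then says $x_t$, and hence $x_s=N_{f_s}(x_t)$, are approximate zeros of $f_s$, and provisionally $\zeta_s$ denotes the associated zero.

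\textbf{Assertions (4)--(7).} Since $x_s-x_t=-Df_s(x_t)|_{x_t^\perp}^{-1}f_s(x_t)\in x_t^\perp$, one has $d_R(x_t,x_s)\le d_T(x_t,x_s)=\beta_0(f_s,x_t)$, so $u':=\tfrac{D^{3/2}}{2}\mu(f_s,x_t)d_R(x_t,x_s)\le a<1/19$, and Lemma~\ref{lem-9} upgrades~(1) to $\mu(f_s,x_s)\le(1+3.805u')\mu(f_s,x_t)$; since $a\le0.2\,\epsilon$ the right-hand side is $\le(1-\epsilon)^{-1}\mu(f_t,x_t)$, which is~(4). For~(5), Theorem~\ref{the-3} gives $d_R(x_t,\zeta_s)\le d_T(x_t,\zeta_s)\le\sigma(a)\beta_0(f_s,x_t)$, so $u'':=\tfrac{D^{3/2}}{2}\mu(f_s,x_t)d_R(x_t,\zeta_s)\le\sigma(a)\,a<1/20$; Theorem~\ref{the-1} with $f=g=f_s$ (so that its parameter ``$v$'' vanishes) compares $\mu(f_s,\zeta_s)$ with $\mu(f_s,x_t)$, and composing with the two-sided comparison of $\mu(f_s,x_t)$ and $\mu(f_t,x_t)$ from Lemma~\ref{lem-10} gives the $(1\pm\epsilon)$ bounds. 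Finiteness of $\mu(f_s,\zeta_s)$ forces non-degeneracy of $\zeta_s$, whence the implicit function theorem together with this uniform bound yields a continuous family $s\mapsto\zeta_s$ originating at $\zeta_t$, which also identifies the provisional zero of~(3) with the continuous lift. Assertion~(6) is Proposition~\ref{prop-beta-of-iterate} applied to $(f_s,x_t)$, whose hypothesis is exactly~(3). Finally,~(7): the finiteness in~(4) gives $\rank Df_s(x_s)|_{x_s^\perp}=n$, and combining~(6) (which gives $\beta_0(f_s,x_s)\le1.23\,a\,\beta_0(f_s,x_t)$),~(4) ($\mu(f_s,x_s)\le(1+3.805a)\mu(f_s,x_t)$), and the definition of $a$ gives $\tfrac{D^{3/2}}{2}\beta_0(f_s,x_s)\mu(f_s,x_s)\le1.23\,a^2(1+3.805a)$, which for $\epsilon\le1/6$ is below $0.064\,\epsilon^2=0.128\,\alpha$.

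\textbf{Where the difficulty lies.} Each individual step is a one-line norm factorization followed by one perturbation lemma, so the real work --- and the only place genuine care is needed --- is the \emph{explicit constant chase}: one must verify that the compound of the factors $(1\pm\epsilon/5)^{\pm1}$, $1+3.805u$, $\sigma(a)$, $1/\psi(a)$, $1.23$, etc., stays inside the stated thresholds ($0.049$ in~(3), $1/19$ and $1/20$ at the inputs of Lemma~\ref{lem-9} and Theorem~\ref{the-1}, and $0.128\,\alpha$ in~(7)) \emph{simultaneously for all} $\epsilon\in(0,1/6]$; this is exactly what pins down the choice $\alpha=\epsilon^2/2$ and the ceiling $\epsilon\le1/6$, the slack being what absorbs the quadratic error terms. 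The only non-computational subtlety is the identification, for every $s\in\,]t,t']$, of the zero furnished by Theorem~\ref{the-3} at $(f_s,x_t)$ with the continuous lift $\zeta_s$: this rests on the non-degeneracy established in~(5) plus a connectedness/implicit-function argument rather than on a direct estimate.
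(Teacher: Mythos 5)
Your proof is correct and follows essentially the same route as the paper: the same factorization through $Df_s(x_t)|_{x_t^\perp}^{-1}Df_t(x_t)|_{x_t^\perp}$ for item (2), Theorem~\ref{the-3} for (3) and (5), Proposition~\ref{prop-beta-of-iterate} for (6), and the same constant chase for (7). The only cosmetic difference is that for items (4)--(5) you compose Lemma~\ref{lem-9} (perturbation in $x$) with Lemma~\ref{lem-10} (perturbation in $f$), whereas the paper applies Corollary~\ref{cor-1} once to the simultaneously perturbed pair; your composed constants still land inside the required thresholds, as you verify.
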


\begin{proof}
\noindent {\bf 1.}
From equation \eqref{E1},
\[
\|f_t - f_s\| \le l(s)-l(t) \le l(t')-l(t) \le \frac{\epsilon}{5 D^{1/2} \mu(f_t,x_t)},
\]
so that $v=\sqrt{D} \mu(f_t,x_t) \|f_t-f_s\| \le \epsilon/5 \le 1/20$,
and Corollary~\ref{cor-1} gives $\left(1 - \epsilon\right)\mu(f_s, x_t) \le \mu(f_{t}, x_t)$.

\medskip

\noindent {\bf 2.}
$$
\beta_0(f_s, x_t) = \| \invder{f_s}{x_t} f_s(x_t) \| \le $$
$$\| \invder{f_s}{x_t} Df_{t}(x_t)|_{x_t^{\perp}} \| \left( \| \invder{f_{t}}{x_t} \left(f_s(x_t) - f_t(x_t)\right)\|
+ \| \invder{f_t}{x_t} f_t(x_t) \|\right).$$
By Lemma \ref{lem-10}, \eqref{E0} and \eqref{E2} we obtain:
\[
\beta_0(f_s, x_t) \leq \frac{1}{1-v}\left( \phi_{t, s} (x_t)  + \beta_0(f_t,x_t)\right) \le
\]
\[\frac{1}{1-\epsilon/5} \left( \frac{\epsilon}{5 D^{3/2}\mu(f_t,x_t)} + \frac{2\al}{ D^{3/2}\mu(f_t,x_t)} \right) =
\frac{\epsilon/5+2 \al}{ (1-\epsilon/5)D^{3/2}\mu(f_t,x_t)}.
\]
\medskip
For the lower bound,
$$
\beta_0(f_s, x_t) = \| \invder{f_s}{x_t} f_s(x_t) \| \ge $$
$$
\left\| \left(\invder{f_s}{x_t} Df_{t}(x_t)|_{x_t^{\perp}}\right)^{-1}\right\|
^{-1}
\left( \| \invder{f_{t}}{x_t} \left(f_s(x_t) - f_t(x_t)\right)\|
- \| \invder{f_t}{x_t} f_t(x_t) \|\right).$$
By Lemma \ref{lem-10}, \ref{E0} and \ref{E2} we obtain:
\[
\beta_0(f_s, x_t) \ge \frac{1}{1+v}\left( \phi_{t, s} (x_t)  - \beta_0(f_t,x_t)\right) \ge
\]
\[
\ge \frac{1}{1+\epsilon/5}\left( \phi_{t, s} (x_t)  - \frac{2 \alpha }
{D^{3/2} \mu(f_t,x_t)} \right).
\]
\medskip

\noindent {\bf 3.} Combining the the two preceding items,
\begin{equation}\label{item-3}
\frac{D^{3/2}}{2} \beta_0(f_s, x_t)\mu(f_s,x_t)
\le
\frac{\epsilon/10+\al}{(1-\epsilon) (1-\epsilon/5) }
\le
0.037931\cdots < 0.049.
\end{equation}
Thus, by Theorem \ref{the-3}, $x_t$  and $x_s$ are
approximate zeros of $f_s$ associated with $\ze_s$.

\medskip
\noindent {\bf 4.}
Using item 2,
\[
d_R(x_t, x_s) \le \beta0(f_s, x_t) \le
\frac{\epsilon/5+2 \al}{ (1-\epsilon/5)D^{3/2}\mu(f_t,x_t)}.
.
\]
Thus,
\[
u=D^{3/2} d_R(x_t, x_s) \mu(f_t,x_t) /2
\le
\frac{\epsilon/10+ \al}{ (1-\epsilon/5)} < \epsilon/5 .
\]
Then, we can use Corollary~\ref{cor-1} again to bound
\[
\mu(f_s, x_s) \le \frac{1}{1-\epsilon} \mu(f_t,x_t) .
\]

\medskip
\noindent {\bf 5.}
From Theorem~\ref{the-3},
\[
d_R(x_t,\zeta_s) \le d_T(x_t, \zeta_s) \le
\sigma(\alpha(f_s,x_t)) \beta_0(f_s, x_t)
\le
1.0429\cdots  \beta_0(f_s, x_t).
\]
Thus,
\[
u = \frac{D^{3/2}}{2} \mu(f_t,x_t) d_R(x_t, \zeta_s)
\le
0.1978026 \cdots \epsilon < \epsilon/5
.
\]
We bounded in item 1 the quantity $v= D^{1/2} \mu(f_t,x_t) \|f_t-f_s\| <
\epsilon/5$. Hence, by Corollary~\ref{cor-1} again:
\[
(1-\epsilon) \mu(f_s, \zeta_s) \le \mu(f_t, \zeta_t)
\le (1+\epsilon) \mu(f_s, \zeta_s) .
\]
\medskip
\noindent {\bf 6.}
From item 3 and Proposition~\ref{prop-beta-of-iterate},
\[
\beta_0(f_s, x_s) \le
1.23
\alpha_0(f_s,x_t)
\beta_0(f_s,x_t)
.
\]
\medskip

\noindent {\bf 7.}
Because $\mu(f_s,x_s)$ is finite, $Df_s(x_s)$ has full rank.
From items 1, 6 and \eqref{item-3},
\[
\frac{D^{3/2}}{2}
\beta_0(f_s,x_s)
\mu(f_s, x_s)
\le
1.23
\left( \frac{\epsilon/10+\alpha}{(1-\epsilon)(1-\epsilon/5)}
\right)^2
\le
0.128
\alpha.
\]
\end{proof}

Recall that our algorithm allows for an approximate computation
of the Newton iteration. The robustness Lemma below shows that if a
point $x$ satisfies \eqref{E0} and conclusion 7 of
Lemma~\ref{homotopy-lemma}, then an approximation $y$ of $x$
satisfies \eqref{E0} and \eqref{E1}.

\begin{lemma}\label{lem-robustness} Assume that $\|f\|=1$ and
$\|x\|=\|y\|=1$. Let $\alpha \le 1/72$ and $c \le 0.8$.
Suppose that $Df(x)_{|x^{\perp}}$ has rank $n$, and
\begin{eqnarray}
\label{R-H1}
\frac{D^{3/2}}{2} \beta_0(f,x) \mu(f,x) \le 0.128 \alpha \\
\label{R-H2}
u=\frac{D^{3/2}}{2} d_R(x,y) \mu(f,x) \le \frac{c \alpha}{\sqrt{D} \mu(f,x)}
.
\end{eqnarray}
Then, $Df(y)_{|y^{\perp}}$ has rank $n$, and
\begin{equation}
\label{R-C1}
\frac{D^{3/2}}{2} \beta_0(f,y) \mu(f,y) \le \alpha \\
\end{equation}
and furthermore, $x$ and $y$ are approximate zeros associated to
the same exact zero $\zeta$.
\end{lemma}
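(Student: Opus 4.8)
The plan is to treat $y$ as a minuscule perturbation of $x$, to propagate the strong $\alpha$-condition \eqref{R-H1} at $x$ to the $\alpha$-condition \eqref{R-C1} at $y$, and then to identify the associated zeros through the $\gamma$-theory criterion used in the proof of Theorem~\ref{the-3}. I normalise so that $\|f\|=\|x\|=\|y\|=1$ and fix the representative of $y$ with $\langle x,y\rangle\ge 0$; in this gauge $\|x-y\|=2\sin(d_R(x,y)/2)\le d_R(x,y)$, the orthogonal projection $w$ of $y-x$ onto $x^\perp$ has $\|w\|=\sin d_R(x,y)$, and $|\langle y-x,x\rangle|=1-\cos d_R(x,y)\le d_R(x,y)^2/2$. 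I would first record that all perturbation parameters are tiny: since $\mu(f,x)\ge\sqrt n\ge\sqrt2$ (Lemma~\ref{lem-1}) and $D\ge2$, hypothesis \eqref{R-H2} forces $u\le c\alpha/2\le\alpha/2\le1/144<1/19$, so $d_R(x,y)=2u/(D^{3/2}\mu(f,x))$ and $\beta_0(f,x)$ are $O(\alpha/(D^{3/2}\mu(f,x)))$. Because $u<1/19$, Lemma~\ref{lem-9} (with Lemma~\ref{lem-7}) applies: $Df(y)|_{y^\perp}$ has rank $n$, $\mu(f,y)\le(1+3.805u)\mu(f,x)$, and $\|Df(y)|_{y^\perp}^{-1}Df(x)|_{x^\perp}\|\le1+3.805u$.

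The heart of the proof is the estimate of $\beta_0(f,y)$. I would write
\[
\beta_0(f,y)\le\|Df(y)|_{y^\perp}^{-1}Df(x)|_{x^\perp}\|\cdot\|Df(x)|_{x^\perp}^{-1}f(y)\|
\]
and Taylor-expand $f(y)=f(x)+Df(x)(y-x)+\sum_{k\ge2}\tfrac1{k!}D^kf(x)(y-x)^k$, so that $Df(x)|_{x^\perp}^{-1}f(y)$ splits into three pieces. The first is $Df(x)|_{x^\perp}^{-1}f(x)$, of norm $\beta_0(f,x)$, controlled by \eqref{R-H1}. The second, $Df(x)|_{x^\perp}^{-1}Df(x)(y-x)$, is the oblique projection of $y-x$ onto $x^\perp$ along $\ker Df(x)$; writing $y-x=w+\langle y-x,x\rangle x$ and using Euler's identity $Df(x)x=\diag(d_i)f(x)$, it equals $w+\langle y-x,x\rangle\,Df(x)|_{x^\perp}^{-1}\diag(d_i)f(x)$, so its norm is at most $\sin d_R(x,y)+|\langle y-x,x\rangle|\,\delta(f,x)\le\sin d_R(x,y)+\tfrac12 d_R(x,y)^2D^{1/2}\mu(f,x)$ by Lemma~\ref{lem-5}. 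The third piece is bounded by $\|y-x\|\,u/(1-u)$ using $\gamma_0(f,x)\|y-x\|\le\tfrac{D^{3/2}}{2}\mu(f,x)d_R(x,y)=u$ and Lemma~\ref{high-der}. Substituting $d_R(x,y)=2u/(D^{3/2}\mu(f,x))$ and $u\le c\alpha/(\sqrt D\mu(f,x))$, every piece is $O(\alpha/(D^{3/2}\mu(f,x)))$; collecting the numerical constants and using $\alpha\le1/72$, $c\le0.8$ and $\mu(f,y)\le(1+3.805u)\mu(f,x)$, I expect $\tfrac{D^{3/2}}{2}\beta_0(f,y)\mu(f,y)\le0.56\,\alpha<\alpha$, which is \eqref{R-C1}.

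The delicate point — the only place a genuine quadratic cancellation is needed — is the second Taylor term. The naive bound $\|Df(x)|_{x^\perp}^{-1}Df(x)(y-x)\|\le\|y-x\|(1+D^{1/2}\mu(f,x))$ would contribute a term of size $\sim c\alpha/(D^{3/2}\mu(f,x))$ that is too large for the final constant to fall below $1$; one must instead exploit that the dangerous direction $x$ enters only through $\langle y-x,x\rangle=O(d_R(x,y)^2)$, which is exactly why the gauge $\langle x,y\rangle\ge0$ is used. I expect the remaining estimates to be routine bookkeeping with comfortable slack.

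Finally, for the last assertion I would apply Theorem~\ref{the-3} at $x$ — legitimate since $\tfrac{D^{3/2}}{2}\beta_0(f,x)\mu(f,x)\le0.128\alpha<0.049$ — to obtain a zero $\zeta$ of $f$ with $d_T(x,\zeta)\le\sigma(0.128\alpha)\beta_0(f,x)$ and with $x$ an approximate zero associated to $\zeta$. Then $d_R(y,\zeta)\le d_R(x,\zeta)+d_R(x,y)$ is again $O(\alpha/(D^{3/2}\mu(f,x)))$, so bounding $\mu(f,\zeta)\le(1+3.805u')\mu(f,x)$ via Lemma~\ref{lem-9} (with $u'=\tfrac{D^{3/2}}{2}d_R(x,\zeta)\mu(f,x)$) and $\gamma_0(f,\zeta)\le\tfrac{D^{3/2}}{2}\mu(f,\zeta)$ via Lemma~\ref{high-der}, one checks $d_T(y,\zeta)\,\gamma_0(f,\zeta)\le\tfrac{3-\sqrt7}{2}$. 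By the criterion of \cite{BCSS}, Chap.~14, Theorem~1 (used in the proof of Theorem~\ref{the-3}), $y$ is then an approximate zero of $f$ associated to the \emph{same} $\zeta$, which finishes the proof.
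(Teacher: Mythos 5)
Your proof is correct, and it shares the paper's skeleton: Lemma~\ref{lem-1} to force $u<1/19$, Lemma~\ref{lem-9} (via Lemma~\ref{lem-7}) for the rank of $Df(y)|_{y^\perp}$ and for $\mu(f,y)\le(1+3.805u)\mu(f,x)$, the factorization $\beta_0(f,y)\le\|Df(y)|_{y^\perp}^{-1}Df(x)|_{x^\perp}\|\cdot\|Df(x)|_{x^\perp}^{-1}f(y)\|$ followed by a Taylor expansion of $f(y)$ at $x$, and Theorem~\ref{the-3} for the approximate-zero conclusion. It diverges at two points worth recording. First, for the linear Taylor term you use the exact identity $Df(x)|_{x^\perp}^{-1}Df(x)(y-x)=w+\langle y-x,x\rangle\, Df(x)|_{x^\perp}^{-1}\diag(d_i)f(x)$ coming from Euler's relation, so that $\delta(f,x)$ is multiplied only by $|\langle y-x,x\rangle|=O(d_R(x,y)^2)$ and the whole term contributes $u+O(u^2)\le c\alpha/2$ after rescaling. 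The paper instead records the bound $\delta(f,x)\|y-x\|$ for this term, which as written omits the orthogonal component $\|w\|$ of $y-x$; your remark that the naive repair $\|y-x\|\left(1+\delta(f,x)\right)$ would push the final constant past $1$ when $c=0.8$ is accurate, so the quadratic cancellation you isolate is genuinely needed to justify the stated constants, and it also yields a better final bound (about $0.56\,\alpha$ versus the paper's $0.97\,\alpha$). Second, for the last assertion the paper argues that every point on the minimizing arc from $x$ to $y$ is an approximate zero and concludes by continuity that the associated zero cannot change, whereas you verify the criterion $d_T(y,\zeta)\,\gamma_0(f,\zeta)\le(3-\sqrt{7})/2$ of \cite{BCSS} directly at the zero $\zeta$ furnished by Theorem~\ref{the-3} applied at $x$; your route is more explicit and sidesteps the connectedness argument. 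The remaining arithmetic you leave as ``bookkeeping'' is indeed routine given the explicit bounds you set up, so there is no gap.
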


\begin{proof}
By using $D^{3/2} \mu(f,x) \ge 4$ (see Lemma~\ref{lem-1} and the hypothesis $D \geq 2$)
we obtain that $u \le 0.0055\cdots < 1/19$.
Therefore, Lemma~\ref{lem-9} implies that
\[
\mu(f,y) \le (1+3.805) \mu(f,x)
\]
and in particular, $Df(y)_{|y^{\perp}}$ has rank $n$.
\medskip
\par
To estimate $\beta_0(f,y)$, we decompose
\[
\beta_0(f,y) = 
\left\|
Df(y)_{|y^{\perp}}^{-1}
f(y)
\right\|
\le
\left\|
Df(y)_{|y^{\perp}}^{-1}
Df(x)_{|x^{\perp}}
\right\|
\left\|
Df(x)_{|x^{\perp}}^{-1}
f(y)
\right\|
.
\]
The first term is bounded by Lemma~\ref{lem-7},
\[
\left\|
Df(y)_{|y^{\perp}}^{-1}
Df(x)_{|x^{\perp}}
\right\|
\le
1+3.805u
.
\]
Taylor's exansion gives $ Df(x)_{|x^{\perp}}^{-1} f(y) = $ 
\[
Df(x)_{|x^{\perp}}^{-1} f(x)
+
Df(x)_{|x^{\perp}}^{-1} 
Df(x) (y-x)
+
\sum_{k \ge 2} \frac{1}{k!}
Df(x)_{|x^{\perp}}^{-1} D^kf(x)(y-x)^k
.
\]
Taking norms,
\[
\left\| Df(x)_{|x^{\perp}}^{-1} f(y) \right\|
\le
\beta_0(f,x)
+
\delta(f,x) \|y-x\|
+
\frac{\|y-x\|^2 \gamma_0(f,x)}{1-
\|y-x\| \gamma_0(f,x)}
.
\]
By Lemma~\ref{high-der}c,$\|y-x\| \gamma_0(f,x) \le u$. Hence,
\[
\beta_0(f,x) \le 
(1+3.805 u) 
\left(
\beta_0(f,x)
+
\delta(f,x) \|y-x\|
+
\frac{\|y-x\| u}{1-u}
\right)
.
\]
Using Lemma~\ref{lem-5}, $\delta(f,x) \le \sqrt{D}\mu(f,x)$. Thus,
\begin{eqnarray*}
\frac{D^{3/2}}{2} \beta_0(f,y) \mu(f,y) &\le& 
(1+3.805 u)^2
\left( 0.128 \alpha + \sqrt{D} \mu(f,x) u + \frac{u^2}{1-u} \right) \\
&\le&
(1+3.805 u)^2
\left( 0.128 + c + \frac{cu}{\sqrt{D}\mu(f,x) (1-u)} \right) \alpha \\
&\le& 0.97 \alpha < \alpha
\end{eqnarray*}
Since $\alpha \le 1/72 \le 0.049$, Theorem~\ref{the-3} implies that
both $x$ and $y$ are approximate zeros of $f$. As this is also the
case for all the points in the shortest arc of circle between $x$
and $y$, the associated zero must be the same.
\end{proof}

\begin{lemma}\label{alt1}
Assume the Hypotheses of Lemma~\ref{homotopy-lemma}.
Let $0 < \xi \le 1$. If furthermore
\[
l(t')-l(t) \ge \frac{\xi \epsilon}{5 \sqrt{D} \mu(f_t,x_t)}
,
\]
then
\[
L(t') - L(t) \ge
\frac{ \xi \epsilon} {5 D^{3/2} } .
\]
\end{lemma}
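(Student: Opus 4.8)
The plan is to bound $L(t')-L(t)$ from below by keeping only the contribution of the system path and discarding that of the zero path. Recalling the definition
\[
L(t')-L(t)\ =\ \int_t^{t'}\bigl(\|\dot f_s\|_{f_s}^2+\|\dot\zeta_s\|_{\zeta_s}^2\bigr)^{1/2}\mu(f_s,\zeta_s)\,ds,
\]
I would first use the trivial inequality $\bigl(\|\dot f_s\|_{f_s}^2+\|\dot\zeta_s\|_{\zeta_s}^2\bigr)^{1/2}\ge\|\dot f_s\|_{f_s}$ to get $L(t')-L(t)\ge\int_t^{t'}\|\dot f_s\|_{f_s}\,\mu(f_s,\zeta_s)\,ds$. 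That this integral is legitimate — that $s\mapsto\zeta_s$ is a well-defined continuous (hence $C^1$, by non-degeneracy) lifting on all of $[t,t']$ and that $\mu(f_s,\zeta_s)<\infty$ — is exactly what conclusion~5 of Lemma~\ref{homotopy-lemma} provides.

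Next I would turn the varying factor $\mu(f_s,\zeta_s)$ into a constant. Conclusion~5 of Lemma~\ref{homotopy-lemma} also gives $\mu(f_t,x_t)\le(1+\epsilon)\mu(f_s,\zeta_s)$, i.e. $\mu(f_s,\zeta_s)\ge\frac{1}{1+\epsilon}\mu(f_t,x_t)$ for every $s\in[t,t']$. Substituting this and pulling the constant out of the integral,
\[
L(t')-L(t)\ \ge\ \frac{\mu(f_t,x_t)}{1+\epsilon}\int_t^{t'}\|\dot f_s\|_{f_s}\,ds\ =\ \frac{\mu(f_t,x_t)}{1+\epsilon}\bigl(l(t')-l(t)\bigr),
\]
the last equality being just the definition of the length $l$.

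Finally I would feed in the extra hypothesis $l(t')-l(t)\ge\frac{\xi\epsilon}{5\sqrt D\,\mu(f_t,x_t)}$; the factor $\mu(f_t,x_t)$ cancels, leaving $L(t')-L(t)\ge\frac{\xi\epsilon}{5\sqrt D\,(1+\epsilon)}$. To conclude, observe that since $D\ge2$ and $\epsilon\le1/6$ we have $1+\epsilon\le\frac{7}{6}<2\le D$, hence $\sqrt D\,(1+\epsilon)\le D\sqrt D=D^{3/2}$, so $L(t')-L(t)\ge\frac{\xi\epsilon}{5D^{3/2}}$, which is the claim.

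I do not anticipate any genuine obstacle: this is a two-line estimate once Lemma~\ref{homotopy-lemma} is available. The only point requiring care is the comparison $\mu(f_s,\zeta_s)\ge\frac{1}{1+\epsilon}\mu(f_t,x_t)$ together with the existence of the lifted path $\zeta_s$ throughout $[t,t']$; both are exactly the content of conclusion~5 of Lemma~\ref{homotopy-lemma}, so nothing new has to be established here.
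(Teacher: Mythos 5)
Your proof is correct and follows essentially the same route as the paper: drop the $\dot\zeta_s$ contribution, replace $\mu(f_s,\zeta_s)$ by $\mu(f_t,x_t)/(1+\epsilon)$ via conclusion~5 of Lemma~\ref{homotopy-lemma}, recognize the remaining integral as $l(t')-l(t)$, and absorb the factor $1+\epsilon$ into $\sqrt{D}\le D^{3/2}/(1+\epsilon)$ using $D\ge 2$. Your justification of the last inequality is in fact slightly more explicit than the paper's.
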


\begin{proof}
\begin{eqnarray*}
L(t') - L(t)
&=&
\int_{t}^{t'}
\left\| (\dot f_s, \dot \zeta_s) \right\| \mu(f_s, \zeta_s)
\ ds
\\
&\ge&
\int_{t}^{t'}
\left\| \dot f_s \right\| \mu(f_s, \zeta_s)
\ ds
\\
&\ge&
\frac{\mu(f_t,x_t)}{1+\epsilon}
\int_{t}^{t'}
\left\| \dot f_s \right\|
\ ds \hspace{1em} \text{using Lemma~\ref{homotopy-lemma}(5).}
\\
&=&
\frac{\mu(f_t,x_t)}{1+\epsilon}
(l(t')-l(t))
\\
&\ge&
\frac{ \xi \epsilon }{5 (1+\epsilon) \sqrt{D}}.
\\
&\ge&
\frac{ \xi \epsilon} {5 D^{3/2} } .
\end{eqnarray*}
\end{proof}

\begin{lemma}\label{alt2}
Assume the Hypotheses of Lemma~\ref{homotopy-lemma}
and choose $\epsilon$ and $\xi$ so that $20 \epsilon \le \xi \le 1$.
If furthermore
\[
\phi_{t, t'}(x) \ge \frac{\xi \epsilon}{ 5 D^{3/2} \mu(f_t,x_t)},
\]
then
\[
L(t') - L(t) \ge
\frac{ \xi \epsilon} {13 D^{3/2} }.
\]
\end{lemma}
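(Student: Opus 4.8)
To prove this the plan is to bound the condition length increment below by the Riemannian displacement of the solution path and then control that displacement. As in the proof of Lemma~\ref{alt1}, we drop the $\dot f_s$ term, use Lemma~\ref{homotopy-lemma}(5) to replace $\mu(f_s,\zeta_s)$ by $\mu(f_t,x_t)/(1+\epsilon)$, and bound the length of $s\mapsto\zeta_s$ below by the geodesic distance of its endpoints:
\[
L(t')-L(t)\ \ge\ \frac{\mu(f_t,x_t)}{1+\epsilon}\int_t^{t'}\|\dot\zeta_s\|\,ds\ \ge\ \frac{\mu(f_t,x_t)}{1+\epsilon}\,d_R(\zeta_t,\zeta_{t'}).
\]
So it is enough to produce a lower bound $d_R(\zeta_t,\zeta_{t'})\ge c\,\xi\epsilon/(D^{3/2}\mu(f_t,x_t))$ with $c$ a little above $(1+\epsilon)/13$.

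For the displacement I would use $d_R(\zeta_t,\zeta_{t'})\ge d_R(x_t,\zeta_{t'})-d_R(x_t,\zeta_t)$ and bound the two pieces separately. For the first, Lemma~\ref{homotopy-lemma}(3) supplies the hypothesis of Proposition~\ref{prop:beta-dist} for $(f_{t'},x_t)$, whose associated zero is exactly $\zeta_{t'}$; hence $\beta_0(f_{t'},x_t)\le 1.128\,d_T(x_t,\zeta_{t'})$, and since $d_T(x_t,\zeta_{t'})$ is tiny (a few thousandths, from Lemma~\ref{homotopy-lemma}(2), \eqref{item-3} and $D^{3/2}\mu\ge4$ via Lemma~\ref{lem-1}) while $d_R\ge d_P$ and $d_P=\sin d_R=d_T/\sqrt{1+d_T^2}$ equals $d_T$ up to a negligible factor, we get $d_R(x_t,\zeta_{t'})\ge\beta_0(f_{t'},x_t)/1.129$. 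For the second, \eqref{E0} and Theorem~\ref{the-3} give $d_R(x_t,\zeta_t)\le d_T(x_t,\zeta_t)\le\sigma(\alpha)\beta_0(f_t,x_t)\le 2\sigma(\alpha)\alpha/(D^{3/2}\mu(f_t,x_t))$, with $\alpha=\epsilon^2/2$. Finally, Lemma~\ref{homotopy-lemma}(2) and the hypothesis $\phi_{t,t'}(x)\ge\xi\epsilon/(5D^{3/2}\mu(f_t,x_t))$ bound $\beta_0(f_{t'},x_t)$ from below:
\[
\beta_0(f_{t'},x_t)\ \ge\ \frac1{1+\epsilon/5}\Bigl(\frac{\xi\epsilon}{5D^{3/2}\mu(f_t,x_t)}-\frac{2\alpha}{D^{3/2}\mu(f_t,x_t)}\Bigr)\ \ge\ \frac{3\xi\epsilon}{20(1+\epsilon/5)D^{3/2}\mu(f_t,x_t)},
\]
where the last step uses $\alpha=\epsilon^2/2$ together with $\xi\ge20\epsilon$, so that $10\alpha=5\epsilon^2\le\xi\epsilon/4$ and $\xi\epsilon-10\alpha\ge\tfrac34\xi\epsilon$.

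The decisive remark is that the hypothesis $20\epsilon\le\xi\le1$ forces $\epsilon\le1/20$; with this, $1+\epsilon/5\le1.01$, $1+\epsilon\le1.05$, $\sigma(\epsilon^2/2)\le1.01$ and $\epsilon^2\le\xi\epsilon/20$. Substituting these bounds, the first piece is at least $\approx0.1316\,\xi\epsilon/(D^{3/2}\mu(f_t,x_t))$ and the second at most $\approx0.0505\,\xi\epsilon/(D^{3/2}\mu(f_t,x_t))$, hence $d_R(\zeta_t,\zeta_{t'})\ge0.081\,\xi\epsilon/(D^{3/2}\mu(f_t,x_t))$, and dividing the first display by $1+\epsilon\le1.05$ yields $L(t')-L(t)\ge0.0772\,\xi\epsilon/D^{3/2}\ge\xi\epsilon/(13D^{3/2})$, since $0.0772>1/13$.

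The argument is entirely elementary; the only real difficulty is that it is numerically tight. The ``ideal'' constant coming from $\beta_0(f_{t'},x_t)\approx\phi_{t,t'}(x_t)$ is $3/20=0.15$, and the three successive losses — the $\beta_0$-to-$d_T$ comparison of Proposition~\ref{prop:beta-dist}, the triangle-inequality subtraction of the small term $d_R(x_t,\zeta_t)\asymp\epsilon^2$, and the comparison of $\mu(f_s,\zeta_s)$ with $\mu(f_t,x_t)$ — cumulatively reduce it to just over $1/13$. So one must keep track of constants carefully and exploit $\epsilon\le1/20$ rather than the section-wide $\epsilon\le1/6$; for a more comfortable margin one may replace $1.128$ in Proposition~\ref{prop:beta-dist} by the sharper constant obtained by rerunning its proof with the much smaller value of $u$ available here (see \eqref{item-3}).
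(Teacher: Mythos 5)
Your proposal is correct and follows essentially the same route as the paper's proof: drop the $\dot f_s$ term, bound the integral below by $d_R(\zeta_t,\zeta_{t'})$ via Lemma~\ref{homotopy-lemma}(5), apply the triangle inequality through $x_t$, bound $d_R(x_t,\zeta_{t'})$ from below using Proposition~\ref{prop:beta-dist} together with the lower bound of Lemma~\ref{homotopy-lemma}(2), bound $d_R(x_t,\zeta_t)$ from above via Theorem~\ref{the-3}, and exploit $20\epsilon\le\xi$ (hence $\epsilon\le 1/20$) to close the tight numerical gap. The only cosmetic difference is that you pass from $d_T$ to $d_R$ via $d_P$ with a $1.129$ constant where the paper uses $\arctan\omega\ge 0.999\,\omega$; the resulting constants ($0.0772$ versus the paper's $0.0775$) both exceed $1/13$.
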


\begin{proof}
\begin{eqnarray*}
L(t') - L(t)
&=&
\int_{t}^{t'}
\left\| (\dot f_s, \dot \zeta_s) \right\| \mu(f_s, \zeta_s)
\ ds
\\
&\ge&
\int_{t}^{t'}
\left\| \dot \zeta_s \right\| \mu(f_s, \zeta_s)
\ ds
\\
&\ge&
\frac{\mu(f_t,x_t)}{1+\epsilon}
\int_{t}^{t'}
\left\| \dot \zeta_s \right\|
\ ds \hspace{1em} \text{using Lemma~\ref{homotopy-lemma}(5).}
\\
&\ge&
\frac{\mu(f_t,x_t)}{1+\epsilon}
d_R(\zeta_{t}, \zeta_{t'}).
\end{eqnarray*}
By the triangle inequality,
\[
d_R(\zeta_{t}, \zeta_{t'})
\ge
d_R(\zeta_{t'},x_t)
-
d_R(x_t, \zeta_{t})
\]
We know from Theorem~\ref{the-3} that $d_R(x_t,\zeta_t) \le \sigma(\alpha) \beta_0(f_t, x_t)$.
Lemma~\ref{homotopy-lemma}(3) says that $\alpha_0(f_{t'},x_{t'}) \le 0.049$
and hence, from Proposition~\ref{prop:beta-dist}, we obtain:
\[
d_T(x_t, \zeta_{t'}) \ge \beta_0(f_{t'},x_t) / 1.128 .
\]
Thus, $d_T(x_t, \zeta_{t'}) \ge
\frac{ \xi \epsilon/5 - 2 \alpha }{(1+\epsilon/5)D^{3/2} \mu(f_t, x_t)}
\frac{1}{1.128}$ by Lemma~\ref{homotopy-lemma}(2).
We use now the bound $20 \epsilon \le \xi$ and the fact that $\alpha=\epsilon^2/2$ to deduce that
\[
d_T(x_t, \zeta_{t'}) \ge
\omega = \frac{ 3 \xi \epsilon /20 }{1.128 (1+\epsilon/5)D^{3/2} \mu(f_t, x_t)}
\]
Since $\epsilon \le \xi/20 \le 1/20$, we can bound $\omega \le 0.001645\cdots$.
Of course,
$ d_R(x_t, \zeta_{t'}) \ge \arctan \omega$.
We may bound $\arctan(\omega) \ge \omega
\arctan'(0.001645\cdots) = \frac{1}{1+0.001645\cdots^2} \omega> 0.999 \omega$.
\medskip
Now,
\begin{eqnarray*}
L(t')-L(t) &\ge&
\frac{ \mu(f_t, x_t)}{1+\epsilon}
\left( 0.999 \frac{3\xi \epsilon/20}{1.128 (1+\epsilon/5)D^{3/2}
\mu(f_t,x_t)}
- \frac {2 \sigma(\alpha) \alpha}{D^{3/2} \mu(f_t,x_t)}
\right)
\\
&\ge&
0.0775\frac{ \xi \epsilon }{D^{3/2}}
>
\frac{ \xi \epsilon }{13 D^{3/2}}
\end{eqnarray*}
\end{proof}

\noindent{\bf Proof of Theorem~\ref{main-homotopy}}
\vskip 3mm
We take $\xi = 20\epsilon$ and $\alpha = \epsilon^2/2$.
Assume that $L(b)$ is finite. By hypothesis and Theorem~\ref{the-3},
$x_0$ is an approximate zero of $f_a$. $(f_t, \zeta_t)_{t \in [a,b]}$
denotes the unique lifting of the path $f_t$ corresponding to 
$\zeta_0$ zero of $f_a$ associated to $x_0$.

\noindent
{\bf Induction hypothesis:}{\em
\[
\frac{D^{3/2}}{2} \mu(f_{t_i} , x_i) \beta_0(f_{t_i},x_i) < \alpha
\]
and furthermore, $x_i$ is an approximate zero associated to
$\zeta_{t_i}$.}

The induction hypothesis holds by hypothesis at $i=0$, so we assume it
is verified up to step $i$.
We are in the hypotheses of Lemma~\ref{homotopy-lemma}
for $t=t_i$, $t'=t_{i+1}=\min(s,s',b)$ and $x=x_{t_i} = x_i$.
Thus, 
\[
\frac{D^{3/2}}{2} 
\mu\left(f_{t_{i+1}} , N_{f_{t_i}}(x_i)\right)
\beta_0\left(f_{t_{i+1}} , N_{f_{t_i}}(x_i)\right)
< 0.128 \alpha
\]
and $N_{f_{t_i}}(x_i)$ is an approximate zero for $f_{t_{i+1}}$ associated
to $\zeta_{t_{i+1}}$.

Now we are in the hypotheses of Lemma~\ref{lem-robustness}. Thus,
$y=x_{i+1}$ picked at \eqref{s3} satisfies the induction hypothesis.
\medskip
\par

In order to bound
the number of iterations, we remark that at each step $i$, one
of the following alternatives is true:
\begin{enumerate}
\item This is the last step: $t_{i+1}=b$.
\item Condition \eqref{s1} is true. In that case, we are under the hypotheses
of Lemma~\ref{alt1}.
\item Condition \eqref{s2} is true. Then we are under the hypotheses of
Lemma~\ref{alt2}.
\end{enumerate}
Therefore, we may infer that at each non-terminal step,
\[
L(t_{i+1}) - L(t_i) > \frac{\xi \epsilon}{13 D^{3/2}}
.\]
Therefore, there can be no more than $260 \xi^{-2} L(b) D^{3/2}$ non-terminal
steps. There is only one terminal step, so the total number of steps
is at most
\[
1+260 \xi^{-2} L(b) D^{3/2} = 1+0.65\ep^{-2} L(b) D^{3/2}
\]

\end{document}